\numberwithin{equation}{section}
\newtheorem{theorem}[equation]{Theorem}
\newtheorem{thm}[equation]{Theorem}
\newtheorem{proposition}[equation]{Proposition}
\newtheorem{corollary}[equation]{Corollary}
\newtheorem{lemma}[equation]{Lemma}
\newtheorem{prop}[equation]{Proposition}
\theoremstyle{definition}
\newtheorem{definition}[equation]{Definition}
\newtheorem{notation}[equation]{Notation}
\newtheorem{defn}[equation]{Definition}
\theoremstyle{remark}
\newtheorem{remark}[equation]{Remark}
\newtheorem{rmk}[equation]{Remark}
\newtheorem{question}[equation]{Question}
\newtheorem{example}[equation]{Example}
\newtheorem{problem}[equation]{Problem}
\newcommand{\C}{\mathcal{C}}
\newcommand{\Q}{\mathbf{Q}}
\newcommand{\into}{\hookrightarrow}
\newcommand{\onto}{\twoheadrightarrow}
\newcommand{\Var}{\mathcal{V}}
\newcommand{\Varc}{\Var^{cptd}}
\newcommand{\F}{\mathbf{F}}
\newcommand{\Z}{\mathbf{Z}}
\newcommand{\N}{\mathbf{N}}
\newcommand{\mc}{\mathcal}
\newcommand{\Cb}{\mathbf{C}}
\newcommand{\Pb}{\mathbb{P}}
\newcommand{\Qb}{\mathbf{Q}}
\newcommand{\Rb}{\mathbf{R}}
\newcommand{\Zb}{\Z}
\newcommand{\Sb}{\mathbb{S}}
\DeclareMathOperator{\Chb}{Ch^b}
\DeclareMathOperator{\Chfb}{Ch^{fb}}
\DeclareMathOperator{\Chstr}{Ch^{str}}
\DeclareMathOperator{\cofib}{hocofib}
\newcommand{\op}{\text{op}}
\newcommand{\Fqbar}{\overline{\F}_q}
\newcommand{\et}{et}
\newcommand{\Frob}{Frob}
\newcommand{\MHS}{MHS}
\newcommand{\M}{\mathbf{M}}
\newcommand{\SB}{SB}
\newcommand{\rig}{rig}
\newcommand{\mbf}{\mathbf}
\newcommand{\rto}{\to}
\newcommand{\FinSet}{\mathbf{FinSet}}
\newcommand{\Oc}{\mathcal{O}}
\newcommand{\Zeta}{\boldsymbol{\zeta}}
\renewcommand{\tilde}{\widetilde}
\renewcommand{\bar}{\overline}
\newcommand{\rcofib}{\ar@{^{(}->}}
\newcommand{\lcofib}{\ar@{_{(}->}}
\newcommand{\compar}{\ar@{||->}}
\DeclareMathOperator{\Ar}{Ar}
\DeclareMathOperator{\co}{\mbf{co}}
\DeclareMathOperator{\comp}{\mbf{comp}}
\DeclareMathOperator{\w}{\mbf{w}}
\DeclareMathOperator{\Sh}{Sh}
\DeclareMathOperator{\Spec}{Spec}
\DeclareMathOperator{\Aut}{Aut}
\DeclareMathOperator{\End}{End}
\DeclareMathOperator{\Repc}{Rep_{cts}}
\DeclareMathOperator{\RepcAR}{Rep_{cts}^{AR}}
\DeclareMathOperator{\Gal}{Gal}
\DeclareMathOperator{\characteristic}{char}
\DeclareMathOperator{\Mod}{Mod}
\DeclareMathOperator{\Modf}{Mod^{fg}}
\DeclareMathOperator{\Image}{Image}
\DeclareMathOperator{\Pro}{Pro}
\DeclareMathOperator{\AR}{AR}
\DeclareMathOperator{\Sym}{Sym}
\DeclareMathOperator{\Ch}{Ch}
\DeclareMathOperator{\coker}{coker}
\DeclareMathOperator{\Fun}{Fun}
\DeclareMathOperator{\Ho}{Ho}
\DeclareMathOperator{\rk}{rk}
\newcommand{\CC}{\mathbf{C}}
\newcommand{\E}{\mathcal{E}}
\renewcommand{\emph}{\textsl}
\title{Derived $\ell$-adic Zeta Functions}
\author{Jonathan Campbell, Jesse Wolfson, and Inna Zakharevich}
\address{Department of Mathematics, Vanderbilt University}
\email{j.campbell@vanderbilt.edu}
\address{Department of Mathematics, University of California, Irvine}
\email{wolfson@uci.edu}
\address{Department of Mathematics, Cornell University}
\email{zakh@math.cornell.edu}
\begin{document}
\maketitle

\begin{abstract}
  Let $K_0 (\Var_k)$ be the Grothendieck group of $k$-varieties. Campbell and Zakharevich have constructed a higher algebraic $K$-theory spectrum $K(\Var_k)$ such that $\pi_0 K(\Var_k) = K_0 (\Var_k)$. In this paper we construct non-trivial classes in the higher homotopy groups of $K(\Var_k)$ when $k$ is finite or a subfield of $\CC$. To do this we give a recipe for lifting motivic measures $K_0 (\Var_k) \rto K_0 (\E)$ to maps of spectra $K(\Var_k) \rto K(\E)$.  We consider two special cases: the classical local zeta function, thought of as a homomorphism $K_0(\Var_{\F_q}) \rto K_0(\End(\Q_\ell))$, and the compactly-supported Euler characteristic, thought of as a homomorphism $K_0(\Var_{\CC}) \rto K_0(\Q)$.  We use lifts of these motivic measures to prove that the Grothendieck spectrum of varieties contains nontrivial geometric information in its higher homotopy groups by showing that the map $\Sb \rto K(\Var_k)$ is nontrivial in higher dimensions when $k$ is finite or a subfield of $\CC$, and, moreover, that when $k$ is finite this map is not surjective on higher homotopy groups.


\end{abstract}

\section{Introduction}
Many constructions in arithmetic and geometry give rise to ``motivic measures'', i.e. assignments
\[
    X\mapsto \mu(X)\in A
\]
which send an algebraic variety $X$ to $\mu(X)\in A$, for $A$ an abelian group, such that for any closed subvariety $Z\subset X$,
\[
    \mu(X)=\mu(X-Z)+\mu(Z).
\]
This observation led Grothendieck to introduce a universal motivic measure, via the ``Grothendieck ring of varieties'' $K_0(\Var_k)$:
\[
    K_0(\Var_k):=\Z[\mathrm{isoclass}(\Var_k)]/\{ [X]=[X-Z]+[Z]~|~Z\subset X~\text{closed}\}.
\]
where $\Var_k$ denotes the category of $k$-varieties, i.e. reduced separated
$k$-schemes of finite type, and where $\mathrm{isoclass}(\Var_k)$ denotes the
set of isomorphism classes of varieties.

The Grothendieck ring of varieties arises by forgetting large parts of the category $\Var_k$. Similarly, many motivic measures of interest arise from forgetting large parts of functorial constructions on varieties, including
\begin{enumerate}
\item the point counting measure $\#(-)$, which comes from applying
  $\mathrm{isoclass}$ to the functor from $\F_q$-varieties to finite sets
        \begin{align*}
            \Var_{\F_q}&\to \FinSet\\
            X&\mapsto X(\F_q)
        \end{align*}
      \item the compactly supported Euler characteristic $\chi(-)$, which comes
        from applying $K_0$ to the functor from the category $\Cb$-varieties and
        isomorphisms ($\Var_{\Cb}^\times$) to the homotopy category of
        homologically bounded rational cochain complexes
        \begin{align*}
            \Var_{\Cb}^{\times}&\to\Ho(\Chb(\Q))\\
            X&\mapsto C_c^\bullet(X(\Cb);\Q)
        \end{align*}
      \item the zeta function $\zeta_{(-)}(s)$,\footnote{While essentially
          classical, following from the Weil conjectures and work of Almkvist
          \cite{almkvist_3} (also explicated beautifully in Grayson
          \cite{Gr79}), this example does not appear to be sufficiently widely
          known.} which, by the Weil conjectures \cite{De}, comes from applying $K_0$ to the functor from the
        category of $\F_q$-varieties and isomorphisms ($\Var_{\F_q}^\times$)
        to the homotopy category of automorphisms of homologically bounded
        $\ell$-adic cochain complexes
        \begin{align*}
            \Var_{\F_q}^{\times}&\to\Ho(\Chb(\Aut(\Q_\ell)))\\
            X&\mapsto \Frob_q\circlearrowright R\Gamma_c(X_{\Fqbar};\Q_\ell)
        \end{align*}
        which sends a variety to the compactly supported $\ell$-adic cohomology of its restriction to $\Fqbar$ along with the action of Frobenius.
      \end{enumerate}
      (For a more in-depth discussion of the
Grothendieck ring of varieties, together with several other important example of
motivic measures, see \cite{nicaisesebag,hales,sahasrabudhe}.)

Recent work of the first and third authors (\cite{campbell, zakharevich, CZ-cgw}) constructs a higher Grothendieck ring of varieties $K(\Var_k)$, namely a commutative ring spectrum (in the sense of stable homotopy theory) with $\pi_0 K(\Var_k)\cong K_0(\Var_k)$. As with any higher algebraic $K$-theory, the structure of this spectrum is much richer than just its $\pi_0$. For example, the third author shows that $K(\Var_k)$ is tightly linked with birational geometry \cite{zakharevich_annihilator} and the first author shows that when $k = \CC$ it is related to Waldhausen's algebraic $K$-theory of spaces \cite{campbell}. On the other hand, despite these links, there are essentially no tools in the literature to {\em exhibit} nontrivial classes in $\pi_i(K(\Var_k))$ for $i>0$. Just as $K(\Var_k)$ represents a substantial refinement of the approximation to $\Var_k$ provided by the classical Grothendieck ring, one can ask for $K$-theoretic refinements of the approximations of important functors provided by classical motivic measures.

The purpose of the present article is to show that the classical functorial
constructions in the last two examples above\footnote{The first example is much
  simpler, and is already treated in \cite{campbell, zakharevich}.}  determine
such refinements, and to use them to exhibit the first examples of nontrivial
classes in the higher homotopy groups of $K(\Var_k)$.

More precisely, let $k$ be a field, $k^s$ a separable closure of $k$, and
$\ell\neq\characteristic(k)$ a prime. Let $\Chb(\Repc(\Gal(k^s/k);\Z_\ell))$
denote the category of homologically finite cochain complexes of continuous
integral $\ell$-adic $\Gal(k^s/k)$ representations. For a variety $X$, let
$[\Gal(k^s/k)\circlearrowright H_c^\bullet(X;\Z_\ell)]$ denote the compactly
supported $\ell$-adic cohomology of $X$, viewed as an object of the homotopy
category $\Ho(\Chb(\Repc(\Gal(k^s/k);\Z_\ell)))$ (see
Section~\ref{sec:prelim-coh} for precise definitions). Recall that a
``$W$-exact'' functor is the data required to induce a map on $K$-theory in this
context (see Section~\ref{sec:prelim} for the precise definition). We
prove:
\begin{theorem} \label{thm:main}
  In the notation above, the functor
    \begin{align*}
        \Var_{k}^{\times}&\to\Ho(\Chb(\Repc(\Gal(k^s/k);\Z_\ell)))^{\op}\\
            X&\mapsto [\Gal(k^s/k)\circlearrowright H_c^\bullet(X;\Z_\ell)]
  \end{align*}
  admits a strict model as a span of $W$-exact functors
  \begin{align*}
    \xymatrix{
            \Var_k & \widetilde{\Var_k} \ar[l]_{\sim} \ar[r] & \Chb(\Repc(\Gal(k^s/k);\Z_\ell))^{\op}}
  \end{align*}
  where the left arrow induces an equivalence on $K$-theory.
\end{theorem}

Now let $R$ be a ring, and let $W^{rat}(R)$ denote the ring of (big) rational Witt vectors of $R$. Work of Almkvist \cite{almkvist_3}, explicated beautifully in Grayson \cite{Gr79}, establishes the isomorphism
\[
    \xymatrix{
    K_0(\Aut(R))\ar[rr]_\cong^{\det(1-t\cdot f)} && W^{rat}(R)}
\]
where $\Aut(R)$ denotes the category of $R$-modules equipped with an automorphism. Now take $k=\F_q$ and $R=\Q_\ell$. Recall that $\Gal(\Fqbar/\F_q)$ is canonically freely topologically generated by Frobenius $\Frob_q$. Then specializing Theorem~\ref{thm:main} and applying $K$-theory, we obtain the following.
\begin{corollary} \label{cor:HWzeta} Let $k=\F_q$ be a finite field and
  $\ell \nmid q$ a prime. The strict model of Theorem~\ref{thm:main} determines a contractible space of maps of $K$-theory spectra
  \begin{equation*}
	   \Zeta: K(\Var_{\F_q})\to K(\Aut(\Z_\ell)),
    \end{equation*}
    which fits into a commuting square
    \begin{equation*}
          \xymatrix@C3em{
            K_0(\Var_{\F_q}) \ar[r]^-{\pi_0\Zeta} \ar[d]_{Z(-,t)} &
            K_0(\Aut(\Z_\ell)) \ar[d]^{\det(1-t\cdot Frob_q^\ast)}\\
            W^{rat}(\Zb) \ar[r] & W^{rat}(\Zb_\ell)}
	\end{equation*}
    after applying $\pi_0$.
\end{corollary}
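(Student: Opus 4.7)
The existence of $\Zeta$ is immediate from Theorem~\ref{thm:main} applied with $k=\F_q$; the substance of the corollary is the commutativity of the square after $\pi_0$. My plan is to identify each arrow concretely on generators and then reduce commutativity to the Grothendieck--Lefschetz trace formula.

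Concretely: by Theorem~\ref{thm:main}, the map $\pi_0\Zeta$ sends $[X]$ to the virtual representation $\sum_i(-1)^i[H^i_{\et,c}(X_{\Fqbar};\Z_\ell)]$. The left vertical arrow is Ramachandran's motivic measure \cite{ramachandran}, sending $[X]$ to the power series $\zeta_X(t)\in 1+t\Z[[t]]$ (with $t=q^{-s}$), viewed as an element of $W(\Z)$ under the identification $W(R)\simeq 1+tR[[t]]$ in which addition of Witt vectors corresponds to multiplication of power series. The right vertical arrow sends a class $[V]$ to $\det(1-\Frob_q^* t\mid V\otimes_{\Z_\ell}\Qb_\ell)\in 1+t\Z_\ell[[t]]\simeq W(\Z_\ell)$; this descends through $K_0$ because the determinant is multiplicative in short exact sequences and multiplication of polynomials is the addition law on big Witt vectors. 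The bottom arrow is the functorial map induced by the inclusion $\Z\hookrightarrow\Z_\ell$.

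Given these descriptions, commutativity of the square reduces to the classical identity
\begin{equation*}
    \zeta_X(t) \;=\; \prod_i \det\bigl(1-\Frob_q^* t\,\big|\,H^i_{\et,c}(X_{\Fqbar};\Qb_\ell)\bigr)^{(-1)^{i+1}}
\end{equation*}
in $W(\Z_\ell)$. This is Grothendieck's cohomological formula for the Hasse--Weil zeta function: both sides have constant term $1$, and taking logarithmic derivatives produces the same generating series $\sum_{n\geq 1}|X(\F_{q^n})|\,t^{n-1}$ --- from the Euler product expansion on the left, and from the Grothendieck--Lefschetz trace formula $|X(\F_{q^n})|=\sum_i(-1)^i\,\mathrm{tr}(\Frob_q^n\mid H^i_{\et,c})$ on the right.

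The only delicate points I anticipate are bookkeeping. First, one must verify that $V\mapsto\det(1-\Frob_q^* t\mid V)$ is well-defined on $K_0(\Repc(\Gal(\Fqbar/\F_q);\Z_\ell))$: characteristic polynomials of $\Z_\ell$-linear continuous endomorphisms of finitely generated (not necessarily free) $\Z_\ell$-modules should be defined via the $\Qb_\ell$-localization, and one checks that the resulting polynomial still has $\Z_\ell$-coefficients and is multiplicative on short exact sequences (so that torsion representations are handled correctly). Second, one must track sign conventions carefully between the alternating sum defining $\pi_0\Zeta([X])$ and the exponents in the cohomological zeta factorization; either sign of $\Zeta$ is admissible in Theorem~\ref{thm:main}, and one selects the sign that makes the diagram commute. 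Neither of these presents a substantial obstacle: the corollary is essentially a compatibility check between the spectral lift constructed in the body of the paper and the classical Lefschetz trace formula.
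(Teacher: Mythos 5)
Your proposal is correct and follows essentially the same route as the paper: the paper deduces the corollary from Theorem~\ref{thm:main} via Corollary~\ref{cor:gZeta} (where the characteristic-polynomial map $K_0(\Aut(\Z_\ell))\to W(\Z_\ell)$ is supplied by the results of Almkvist and Grayson, playing the role of your multiplicativity check), and then identifies the result with the Hasse--Weil zeta function by citing the Grothendieck--Lefschetz cohomological formula, exactly the identity you reduce to. Your write-up merely makes explicit the bookkeeping (well-definedness on $K_0$, sign/Witt-vector conventions) that the paper leaves to the references.
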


The $K$-theory spectrum $K(\Var_k)$ can be thought of as a topological object that remembers not only that certain varieties decompose into other varieties, which is what $K_0 (\Var_k)$ does, but also \textit{how}. In the same way, the map in Thm~\ref{cor:HWzeta} tells us how the decomposition of varieties gets reflected in the decomposition of zeta functions. This is a great deal of information, and already at the level of the first homotopy group gives subtle information about automorphisms of varieties. We note below that this lift is canonical, and fully determined by the functor in Theorem~\ref{thm:main}.

We refer to $\Zeta$ as the \emph{derived $\ell$-adic zeta function}. More generally, for any field $k$ and any $g\in \Gal(k^s/k)$, we obtain (Corollary~\ref{cor:gZeta}) a ``derived $\ell$-adic $g$-Zeta function''
\[
    \Zeta_g\colon K(\Var_k)\to K(\Aut(\Z_\ell))
\]
which induces the map
\begin{align*}
    K_0(\Var_k)&\to W(\Z_\ell)\\
    [X]&\mapsto \sum_{i=0}^{2\dim X} (-1)^i \det(1-t\cdot g^\ast \circlearrowleft (H^i_c(X;\Z_\ell))).
\end{align*}

The method developed to prove Theorem~\ref{thm:main} can also be used to
construct other motivic measures.  For example, when $k$ is a subfield of $\Cb$
we can prove the following:
\begin{theorem}\label{cor:HWeuler}
    Let $k$ be a subfield of $\Cb$. Let $R$ be a ring. There exists a contractible space of maps of $K$-theory spectra
  \begin{equation*}
	   \mathbf{X}: K(\Var_{k})\to K(R),
    \end{equation*}
    which fits into a commuting square
    \begin{equation*}
          \xymatrix@C3em{
            K_0(\Var_{k}) \ar[r]^-{\pi_0 \mathbf{X}} \ar[d]^{\chi} &
            K_0(R) \ar[d]^{\rk}\\
            \Z \ar@{=}[r] & \Z}
	\end{equation*}
    after applying $\pi_0$.
\end{theorem}
We refer to $\mathbf{X}$ as the \emph{derived compactly supported Euler
  characteristic}.  A special case of this when $R = \Z_\ell$ or $\Z$ can be
obtained from Theorem~\ref{thm:main} by using the isomorphism between
$\ell$-adic and singular cohomology of complex varieties.  However, as the proof
of this theorem is significantly simpler than that of Theorem~\ref{thm:main}, we
present it separately (and, indeed, use it as an illustrative example of the
general method).  For more details and the proof, see
Sections~\ref{sec:approach} and \ref{sec:longproof}, in particular
Corollary~\ref{cor:Reuler}.

\begin{remark}
    While we state the theorem for homotopy categories for conciseness, our proof shows more. For readers who prefer the language of $\infty$-categories, we can summarize this as follows: the classical construction of compactly supported $\ell$-adic cohomology determines a pair of $\infty$-functors
    \begin{align*}
        (-)_!\colon \Var_k^{open}&\to \Chb(\Repc(\Gal(k^s/k);\Q_\ell))\\
        (-)^!\colon \Var_k^{closed,\op}&\to \Chb(\Repc(\Gal(k^s/k);\Q_\ell))
    \end{align*}
    where $\Var_k^{open}$ (resp. $\Var_k^{closed}$) denotes the subcategories of open (resp. closed) embeddings, and where the target denotes the $\infty$-category underlying the dg-category of homologically finite complexes of continuous $\ell$-adic Galois representations. Our proof shows that this pair of $\infty$-functors admits a {\em strict} representative as a span of $W$-exact functors of 1-categories as in Theorem~\ref{thm:main}. The span associates to each $k$-variety a filtering diagram in $\Chb(\Repc(\Gal(k^s/k);\Q_\ell)))$ in which all maps are weak equivalences. The pair of $\infty$-functors are obtained by taking a homotopy inverse limit over this (homotopically constant) diagram. Specializing to $k=\F_q$, we conclude that compactly supported $\ell$-adic cohomology determines a lift of the classical zeta function to a map of spectra
    \[
        K(\Var_{\F_q})\to K(\Aut(\Q_\ell))
    \]
    as in Corollary~\ref{cor:HWzeta}. Because compactly supported $\ell$-adic cohomology is classically defined as a homotopy inverse limit of right derived functors (i.e. a ``derived of a derived''), we refer to this map of spectra as a ``derived zeta function''. A similar, though simpler, discussion applies to Theorem~\ref{cor:HWeuler}.

    While the language of $\infty$-categories allows for useful conceptual statements, it frequently requires significant baggage if one wants to explicitly translate classical references into this framework. To cut down on this baggage,\footnote{e.g. Because categories with weak equivalences provide the only model for $\infty$-categories in which the first and third authors' extension of algebraic K-theory has been defined.} we have chosen to systematically eschew overt mention of $\infty$-categories and $\infty$-functors. However, the interested reader can easily verify the above claims in her favorite model of compactly supported $\ell$-adic cohomology as an $\infty$-functor.
\end{remark}

\begin{remark}
  One use of $\infty$-categories would be to prove a universality statement for our spectral lift of the zeta function. The most general statement of the universality of algebraic $K$-theory appears to be Barwick's \cite{barwick} phrased in terms of Waldhausen $\infty$-categories.  In order to make such a universality statement in our context, one would have to port this machinery to SW-categories (see \ref{sec:prelim} for definitions). While we are confident that this can be done, given the formal similarities between SW and Waldhausen categories, to do so would substantially lengthen the paper.

  There are, however, other ways to see an appropriate universality statement. Steimle \cite{steimle} has shown that the $K$-theory of Waldhausen categories is an initial functor in a certain category of Euler characteristics \cite[Thm. 0.2]{steimle}. Again, given the formal similarities between Waldhausen categories and SW-categories, it is clear that this statement can be made for SW-categories, and with significantly less technical upfront cost. However, we content ourselves with noting that such a universality statement can be made. The spectral lift of the zeta function we obtain is the (homotopy) initial map lifting the zeta function.
\end{remark}

Just as motivic measures allow one to probe the Grothendieck ring of varieties (and thus to coarsely probe the category of varieties itself), the derived motivic measures above allow us to probe the higher Grothendieck ring of varieties. Most ambitiously, one would like to understand the following:
\begin{question}\label{q:whatinfo}
    What arithmetic or geometric information do the higher homotopy groups $K_i(\Var_k)$ encode?
\end{question}
Given the complexity of higher algebraic K-theory in general, and given that several decades of effort have only begun to answer this question for $K_0(\Var_k)$, a full answer to Question~\ref{q:whatinfo} is likely a long ways off. One might instead begin by asking a milder question: are there even nonzero elements in $K_i (\Var_k)$ for $i > 0$? One simple example of an $SW$-category is $\FinSet$, the category of finite sets; by the theorem of Barratt-Priddy-Quillen \cite{barratt_priddy,segal_categories} the $K$-theory of this is equivalent to $\Sb$, the
sphere spectrum.  Note that for any variety $X$ we can define a map
\[\sigma_X: \Sb \rto K(\Var_k)\]
by thinking of $\Sb$ as $K(\FinSet)$ and sending the finite set $F$ to
$\coprod_F X$. When $k$ is a subfield of $\CC$ this gives enough information to detect some of
these nonzero elements.

\begin{theorem}\label{thm:subfield_of_C}
  Let $k$ be a subfield of $\CC$. Then there are arbitrarily high nonzero homotopy groups of $K(\Var_k)$.
\end{theorem}

The proof proceeds by tracing through the map $\pi_*\sigma_X$ and showing that for a fixed $s$ and appropriately chosen $X$, it is nonzero in degrees $4s-1$.  Thus in particular $K_{4s-1}(\Var_k)$ is
non-zero for all $s$. Using an elaboration of this proof could produce many more
non-trivial homotopy groups. For example, from the multiplicative structure of
$K(\Var_k)$ and the inclusion $i: \Aut(X) \to K_1(\Var_k)$, we have, for any
automorphism $f$ of $X$, the composite
\[
\pi_\ast \mathbb{S} \to K_\ast (\Var_k) \xrightarrow{\cdot i(f)} K_{\ast+1}(\Var_k) \to K_{\ast+1}(\Z).
\]
We are hopeful that this gives a method for detecting other non-trivial homotopy
groups of $K_\ast (\Var_k)$, but leave it to future work. We note that this is
an approach very similar to the one B\"{o}kstedt and Waldhausen
\cite{bokstedt_waldhausen} use to detect non-trivial homotopy groups in the
algebraic $K$-theory of spaces, $A(\ast)$.

When $k$ is finite we can obtain more refined information.  For a finite field
$k=\F_q$, the map $X\mapsto X(\F_q)$ defines a map $K(\Var_{\F_q})\to \Sb$ which
is a cosection of $\sigma_{\Spec k}$, yielding
$K(\Var_{\F_q}) \simeq \Sb \vee \widetilde{K}(\Var_{\F_q})$.  Thus the analog of
Theorem~\ref{thm:subfield_of_C} for $k$ finite is essentially trivial as it
exhibits $\pi_*\Sb$ is a summand of $K_*(\Var_k)$.  Defining
$\widetilde{K}(\Var_k) = \cofib \sigma_{\Spec k}$, a more subtle question asks:
\begin{question}
  Do there exist nontrivial elements in $\widetilde{K}_i (\Var_{k})$ for $i > 0$?
\end{question}
Note that this statement makes sense for any $k$. We use the derived zeta function to answer this question affirmatively.

\begin{theorem} \label{thm:calc} The group $\tilde K_1(\Var_k)$ is nontrivial
  whenever $k$ is a subfield of $\Rb$, a finite field with
  $|k| \equiv 3 \pmod 4$, or a global or local field with a place of cardinality
  $3\pmod 4$.
\end{theorem}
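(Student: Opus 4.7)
The plan is to exhibit an explicit class $\alpha \in K_1(\Var_k)$ coming from an automorphism of a zero-dimensional variety, and to distinguish its image under $\Zeta$ from the image of $\pi_1(\Sb)$. Because $\pi_0(\Sb) = \Z$ injects into $K_0(\Var_k)$ via $n \mapsto n[\Spec k]$ (detected, e.g., by point-counting over a finite residue field), the cofiber long exact sequence identifies $\tilde K_1(\Var_k)$ with the cokernel of $\pi_1(\Sb) \to K_1(\Var_k)$, so it suffices to produce such an $\alpha$ lying outside this image. Each listed hypothesis on $k$ ensures that $-1$ is not a square in $k$: for subfields of $\Rb$ and for $\F_q$ with $q \equiv 3 \pmod 4$ this is classical, and for a global or local field with a place $v$ of $|\kappa(v)| \equiv 3 \pmod 4$, a square in $k$ would reduce to a square in $\kappa(v)$. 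Set $L := k(\sqrt{-1})$, let $\sigma$ be its nontrivial $k$-automorphism, and take $\alpha := [(\Spec L, \sigma)] \in K_1(\Var_k)$.

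Fix an odd prime $\ell \neq \characteristic(k)$, possible since the hypotheses preclude characteristic $2$. A direct computation of étale cohomology gives $H^0_{\et, c}(\Spec L \times_k k^s; \Z_\ell) \cong \Z_\ell[\Gal(L/k)]$, pulled back along $\Gal(k^s/k) \twoheadrightarrow \Gal(L/k) = \Z/2$ as the regular representation. By Theorem~\ref{thm:main}, $\Zeta(\alpha)$ is then represented in $K_1(\Repc(\Gal(k^s/k); \Z_\ell))$ by $(\Z_\ell[\Gal(L/k)], \cdot g)$, where $g$ is the nontrivial element of $\Gal(L/k)$. Meanwhile the generator $\eta \in \pi_1(\Sb)$ maps to $[\Spec k \sqcup \Spec k, \mathrm{swap}] \in K_1(\Var_k)$, whose image under $\Zeta$ is the swap on $\Z_\ell \oplus \Z_\ell$ with trivial Galois action.

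Both classes lie in the full subcategory $\Repc(\Gal(L/k); \Z_\ell) \cong \Repc(\Z/2; \Z_\ell)$. Because $\ell$ is odd, the central idempotent $e_- = (1-g)/2 \in \Z_\ell[\Gal(L/k)]$ is well-defined and yields an exact direct-sum decomposition $\Repc(\Z/2; \Z_\ell) \simeq \Mod(\Z_\ell)^{\mathrm{triv}} \oplus \Mod(\Z_\ell)^{\mathrm{sgn}}$, inducing an additive projection $K(\Repc(\Z/2; \Z_\ell)) \to K(\Z_\ell)$ onto the sign summand. Under this map: (i) the trivial representation (unit of $\Repc$) goes to $0$, so the composition $\Sb \to K(\Repc) \to K(\Z_\ell)$ is null and the image of $\pi_1(\Sb)$ is $1 \in K_1(\Z_\ell) = \Z_\ell^\times$; (ii) one computes $e_- \cdot \Z_\ell[\Gal(L/k)] \cong \Z_\ell$ with $\cdot g$ acting as $-1$, so $\Zeta(\alpha) \mapsto -1 \in \Z_\ell^\times$. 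Since $-1 \neq 1$ in $\Z_\ell^\times$, the class $\Zeta(\alpha)$ is not in the image of $\pi_1(\Sb)$, proving the theorem when $k$ is a subfield of $\Rb$ or a finite field.

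The main obstacle is extending this detection to global and local fields, where $\Gal(k^s/k)$ is much larger than $\Gal(L/k)$ and the sign idempotent $(1-g)/2$ no longer makes global sense on $\Repc(\Gal(k^s/k); \Z_\ell)$. The plan is to compose $\Zeta$ with restriction to the decomposition group $D_v \subset \Gal(k^s/k)$ at the distinguished place $v$, then pass to $\Gal(\kappa(v)^{\mathrm{sep}}/\kappa(v)) = D_v/I_v$ via inertia invariants. Since $L/k$ is unramified at $v$ (its residue characteristic is odd and $k(\sqrt{-1})/k$ ramifies only above $2$ and at infinite places), both $\Zeta(\alpha)$ and the image of $\eta$ lie in the full subcategory of $v$-unramified representations, on which the inertia-invariants functor is exact; the resulting map of $K$-theory spectra reduces the problem to the finite-field case for $\kappa(v)$, which is already established.
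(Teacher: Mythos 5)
Your choice of example and overall strategy are reasonable, and for $k\subset\Rb$ the argument essentially works once you observe that complex conjugation $\tau$ is an honest involution in $\Gal(k^s/k)$, so that $e_-=\tfrac{1-\tau}{2}$ acts on \emph{every} object of $\Repc(\Gal(k^s/k);\Z_\ell)$ and $M\mapsto e_-M$ is an exact functor on the whole category, not merely on the full subcategory $\Repc(\Gal(L/k);\Z_\ell)$. That point is not cosmetic: as written, your sign projection is only defined on $K_1$ of a full subcategory, and showing that two classes have distinct images in $K_1(\Repc(\Z/2;\Z_\ell))$ does not show they are distinct in $K_1(\Repc(\Gal(k^s/k);\Z_\ell))$ --- a nonzero class in $K_1$ of a full exact subcategory can perfectly well die in the ambient category. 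This is exactly where the finite-field case breaks: $\Gal(\Fqbar/\F_q)\cong\widehat{\Zb}$ is torsion-free, so no lift of the nontrivial element of $\Gal(L/k)$ is an involution, $\tfrac{1-\Frob}{2}$ is not idempotent on a general object, and your projection does not extend to an exact functor on $\Repc(\Gal(\Fqbar/\F_q);\Z_\ell)$. The same defect recurs in your proposed reduction for global and local fields (inertia invariants are exact only on the full subcategory of $v$-unramified representations, again not the ambient category), and that reduction in any case lands back in the finite-field case, which is the unproved one.

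The missing step is precisely what the paper's machinery supplies: after applying $\Frob_*$ (Corollary~\ref{cor:gZeta}) one lands in $K_1(\Aut(\Qb_2))$, where the pair of commuting automorphisms (Frobenius, geometric swap) is detected by Grayson's map $\sigma_2$ into $K_2(\Qb_2)$ followed by the $2$-adic Hilbert symbol; the computation $(-1,q)_2=-1$ is where the hypothesis $q\equiv 3\pmod 4$ actually does work (the paper's example is $\Pb^1\sqcup\Pb^1$ with the swap rather than $\Spec k(\sqrt{-1})$, but the detection problem is the same). Note that in your write-up the congruence on $q$ enters only through ``$-1$ is not a square,'' so if your detection were valid it would prove the result for all odd $q$ --- a conclusion the paper explicitly does not claim to reach by this example, which should have been a warning sign. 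A repairable elementary alternative in your spirit would be: apply $\Frob_*$, tensor with $\Qb_\ell$, and use the canonical eigenvalue (devissage) decomposition $K_1(\Aut(\Qb_\ell))\cong\bigoplus_p(\Qb_\ell[t]/(p))^{\times}$ over irreducible polynomials $p$, projecting onto the $p=t+1$ block, where your two classes visibly differ; but that is a different argument from the one you wrote and requires justifying the decomposition. As it stands, the finite-field and global/local cases have a genuine gap.
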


To prove this theorem we use the derived $2$-adic zeta function.  For any
category $\C$, let $\Aut(\C)$ be the category of pairs $(P,f)$, where $P\in \C$
and $f \in \Aut(P)$.  Morphisms $(P,f) \rto (Q,g)$ are morphisms $h:P \rto Q$
such that $hf = gh$.  When $\C$, is exact it induces an exact structure on
$\Aut(\C)$. In particular, for a ring $R$, we obtain an exact category $\Aut(R):=\Aut(\Modf(R))$. Specializing from a representation of $\Gal(\bar \F_q/\F_q)$ to its
value on $\Frob_q$ and tensoring by $\Qb$ gives a functor
$\Repc(\Gal(\bar \F_q/\F_q);\Z_\ell) \rto \Aut(\Qb_\ell)$.  Composing the
derived $2$-adic zeta function with the $K$-theory of this functor and applying $\pi_1$,
gives a homomorphism $K_1 (\Var_{k}) \to K_1 (g\Aut(\Qb_2))$.  The group
$K_1 (\Aut(\Qb_2))$ is relatively well-understood. In \cite{Gr79}, Grayson
constructs a homomorphism $\sigma_2: K_1 (\Aut(\Qb_2)) \to K_2 (\Qb_2)$. By
Moore's Theorem (see e.g. \cite[Appendix]{milnor}), the 2-adic Hilbert symbol
induces a (split) surjection $(-,-)_2\colon K_2(\Qb_2)\onto\Z/2\Z$, and by
composing these maps, we produce a map
\[
h_2\colon K_1 (\Var_{k}) \to K_1 (\Aut(\Qb_2)) \to^{\sigma_2} K_2 (\Qb_2) \to^{(-,-)_2} \Z/2\Z.
\]
We then show $h_2 \circ \pi_1\sigma_{\Spec k}$ is trivial, but $h_2 \circ
\pi_1\sigma_{\Pb^1}$ is surjective.

Corollary~\ref{cor:HWzeta} and Theorem~\ref{thm:calc} are both applications of
Theorem~\ref{thm:main}, in which $\Zeta$ is constructed.  In order to construct
$\Zeta$, we use a $K$-theory machinery first created by the first author in
\cite{campbell}. The usual categories one wants to work with as inputs for a
$K$-theory machine are Waldhausen categories \cite{waldhausen}. Unfortunately,
these do not work to produce $K(\Var_k)$, which is why the first and third
authors introduced their formalisms. In \cite{campbell}, the difficulty is
circumvented by defining a modification of Waldhausen categories called
$SW$-categories (the S is for ``scissors'') where one can define algebraic
$K$-theory for $\Var_k$ in much the same way one does for Waldhausen
categories. However, in order to get maps $K(\C) \to K(\mathcal{W})$ where $\C$
is an $SW$-category and $\mathcal{W}$ is a Waldhausen category, one needs the
notion of a ``$W$-exact functor'' introduced in \cite{campbell}. It needs to
satisfy certain variance conditions reminiscent of push-pull formulae (see
Section \ref{sec:prelim} for details). To construct the derived $\ell$-adic zeta
function, we take the $SW$-category $\Var_k$ and the Waldhausen category
$\Chb(\Repc(\Gal(k^s/k);\Z_\ell))$ of homologically finite and bounded chain
complexes of continuous $\ell$-adic Galois representations. To go from one to the other we need to use
compactly-supported \'etale cohomology. Classically, this is defined by choosing a compactification $j\colon X\to \bar{X}$, right deriving the functor $\Gamma\circ j_!$ and then taking a homotopy inverse limit over all such choices to get a well-defined object.

The key technical observation of this paper is that all of this can be made ``strict''.  We show that once we restrict to constant sheaves, the Godement resolution combines both the necessary functoriality and exactness properties required to produce a $W$-exact functor. By further replacing the homotopy inverse limit by the left-facing map in the span, we obtain the span of $W$-exact functors in Theorem~\ref{thm:main}.  Using invariance properties of $K$-theory, we can then invert the left-facing arrow and obtain the derived $\ell$-adic zeta function.

We view this work as part of a larger program to lift motivic measures to the
spectral/homotopical level.  For example, the outline we follow should adapt to give lifts for other cohomologically defined motivic measures, e.g. $p$-adic zeta functions, Serre polynomials, and the Gillet--Soul\'e measure \cite{GS}. One might similarly ask for lifts of Kapranov's motivic zeta function, or of the motivic measure used by Larsen and Lunts \cite{LL} to show that motivic zeta function is \emph{not} rational as a map out of $K_0(\Var_{\Cb})$. See Section~\ref{sec:prospects} for a more detailed discussion.

This paper is organized as follows. In Section \ref{sec:prelim}, we quickly review Waldhausen $K$-theory and introduce SW-categories.  In  Section~\ref{sec:prelim-coh}, we review the background and necessary results  from $\ell$-adic cohomology and Galois representations. In Section ~\ref{sec:approach} we review our general approach to constructing derived motivic measures, and provide the the construction of the motivic measure arising from singular cohomology.  Section~\ref{sec:longproof} contains the full construction of the derived $\ell$-adic zeta function. In Section~\ref{sec:nontriv} we use the results of the previous section to construct nontrivial elements in the higher K-theory of varieties over both $\CC$ and finite fields. We close, in Section~\ref{sec:prospects}, by discussing questions for future work.

\subsection*{Acknowledgments}
We thank Bhargav Bhatt, Denis-Charles Cisinski, Sean Howe, Keerthi Madapusi Pera and Nick Rozenblyum for helpful correspondence. We thank Oliver Braunling, Kiran Kedlaya, Dan Petersen, Ravi Vakil, Chuck Weibel, Kirsten Wickelgren and Ilya Zakharevich for many helpful questions and comments on an earlier draft. We thank the anonymous referee for a careful reading and many helpful comments which greatly improved the paper. J.W. was supported in part by NSF Grant No. DMS-1400349.  I.Z. was supported in part by an NSF MSPRF grant and NSF Grant No. DMS-1654522.

\begin{notation}
  Throughout, when dealing with schemes or varieties, we let
  $Z \hookrightarrow Y$ denote a closed inclusion and $ X \xrightarrow{\circ} Y$
  denote an open inclusion.
\end{notation}

\section{$SW$-categories and $K$-Theory} \label{sec:prelim}

In \cite{zakharevich}, the third author defines a spectrum $K(\Var_k)$ whose
zeroth homotopy group is the Grothendieck ring of varieties over $k$. In \cite{campbell},
the first author gives an alternate construction of this spectrum. In this
paper, we use the latter construction to produce maps out of $K(\Var_k)$, so we
review the structure necessary to produce this spectrum.

Most definitions of $K$-theory work with categories where a suitable notion of
quotient exists, for example Quillen's exact categories \cite{quillen} or
Waldhausen's categories \cite{waldhausen}. These notions of quotient are then
used to define the exact sequences that $K$-theory is defined to ``split.'' When
dealing with the category of varieties, we have no such quotients. Instead, our
``exact sequences'' are sequences of the form $Z \hookrightarrow X
\xleftarrow{\circ} (X - Z)$ where the first map is a closed inclusion and the
second is an open inclusion. The notion of an $SW$-category is meant to modify
Waldhausen's definition of categories with cofibrations and weak equivalences to
allow the use of such ``exact sequences.''  For ease of reading, we review
Waldhausen's construction before recalling the first author's construction.

\begin{defn}[{Waldhausen category, \cite[Section 1.2]{waldhausen}}]
  A \emph{Waldhausen category}\footnote{Referred to as a ``category with
    cofibrations and weak equivalences'' by Waldhausen.} is a category $\mc{C}$
  equipped with two distinguished subcategories: cofibrations and weak
  equivalences, denoted $\mathbf{co}(\mc{C})$ and $\mathbf{w}(\mc{C})$.  The
  arrows in $\co(\mc{C})$ are denoted by hooked arrows
  $\hookrightarrow$. Arrows representing weak equivalences are decorated
  with $\sim$. These categories satisfy the following axioms:
  \begin{enumerate}
  \item $\mc{C}$ has a zero object $0$.
  \item All isomorphisms are contained in $\mathbf{co}(\mc{C})$ and $\mathbf{w}(\C)$.
  \item For all objects $A$ of $\C$, the morphism $0 \rto A$ is a
    cofibration.
  \item (\textbf{pushouts}) For any diagram
    \[\xymatrix{C & A \ar[l] \ar@{^{(}->}[r] & B}\]
    where $A\hookrightarrow B$ is a cofibration, the pushout exists and the morphism $C
    \hookrightarrow B \cup_A C$ is a cofibration.
  \item (\textbf{gluing}) For any diagram
    \[\xymatrix{ C \ar[d]^\sim & A \ar@{^{(}->}[r] \ar[l] \ar[d]^\sim & B
      \ar[d]^\sim \\
      C' & A' \ar@{^{(}->}[r] \ar[l] & B'}\]
    where the vertical morphisms are weak equivalences the induced morphism
    \[B\cup_A C \rto^{\sim} B' \cup_{A'} C'\]
    is also a weak equivalence.
  \end{enumerate}
\end{defn}

We will be using a slightly more general definition of the $K$-theory of a
Waldhausen category, as we need this flexibility for one of our main results.  

We begin with some preliminary definitions.

\begin{defn}
  Let $\mc{C}$ be a category.  The category $\Ar\C$ has, as its objects,
  the morphisms of $\mc{C}$.  The morphisms in $\Ar\C$ from a morphism $f:A \to
  B$ to a morphism $g:C \to D$ are commutative squares
  \[\xymatrix{
      A \ar[r] \ar[d]_f & C \ar[d]^g \\ B \ar[r] & D}.\]
  If $\mc{C}$ is equipped with a subcategory of weak equivalences then a
  morphism in $\Ar\C$ is considered a weak equivalence if both horizontal
  morphisms in the diagram above are weak equivalences.

  Let $[n]$ be the ordered set $\{0<\ldots<n\}$ considered as a category.  Then
  $\Ar[n]$ can be considered to be the set of pairs $(i,j)\in [n]\times[n]$ with
  $i \leq j$.  We are often considering functors $X: \Ar[n] \rto \mc{C}$; in
  this case, we write $X_{i,j}$ for $X(i,j)$.
\end{defn}

\begin{defn}
  Let $\mc{C}$ be a Waldhausen category.  A morphism in $\Ar\mc{C}$ is a
  \emph{weak cofibration} if it is weakly equivalent (via a zigzag of weak
  equivalences) in $\Ar\C$ to a cofibration.  A square in $\C$ is
  \emph{homotopy cocartesian} if it is weakly equivalent (by a zigzag) to a
  pushout square where either both horizontal or both vertical morphisms are
  cofibrations.
\end{defn}

\begin{defn}[{$S'_\bullet$-construction, see \cite[Definition 2.3]{BM-abstract}}]
  Let $\mc{C}$ be a Waldausen category. We define $S'_n \mc{C}$ to be the
  category of functors
  \[
  X: \Ar[n] \to \mc{C}
  \]
  with morphisms natural transformations, subject to the conditions
  \begin{itemize}
  \item the initial map $0 \rto A_{i,i}$ is a weak equivalence for all i.
  \item When $i \leq j \leq k$, $X_{i, j} \to X_{i, k}$ is a weak cofibration.
  \item For any $i \leq j \leq k$ the square
    \[\xymatrix{
    X_{i, j} \ar[r] \ar[d] &  X_{i, k} \ar[d]  \\
    0 \ar[r] & X_{j, k}}
    \]
    is a homotopy cocartesian square.
  \end{itemize}
  A map $A \to B$ in $S'_n\C$ is a weak equivalence when each component map
  $A_{i,j} \to B_{i,j}$ is a weak equivalence; it is a cofibration when each
  component map $A_{i,j} \to B_{i,j}$ is a cofibration and the map $A_{i,k}
  \cup_{A_{i,j}} B_{i,j} \to B_{i,k}$ is a weak cofibration.
\end{defn}
\begin{rmk}
  The $S'_n \mc{C}$ assemble to form a simplicial category (i.e. a simplicial object
  in the category of small categories).  For more detail on this, see
  \cite[Section 2]{BM-abstract}.
\end{rmk}

We now define the algebraic $K$-theory spectrum of a Waldhausen category.
Unfortunately, the $S'_\bullet$-construction does not work correctly for all
Waldhausen categories, but only those satisfying a condition Blumberg--Mandell
call \emph{functorial factorization of weak cofibrations (FFWC)}.  All examples
that we are concerned with satisfy this condition, but a discussion of the
condition is not illuminating for the sake of the current discussion; we prove
that this is the case in Appendix~\ref{app:FFWC} and restrict our attention to
categories satisfying FFWC.

\begin{defn}
  Let $\mc{C}$ be a Waldhausen category satisfying FFWC.  Let $w S'_n \mc{C}$
  denote the subcategory of weak equivalences of $S'_n \mc{C}$ and let
  $N_\bullet wS'_n \mc{C}$ denote the nerve of that category. The topological
  space $K^t(\mc{C})$ is defined by
  \[
  K^t(\mc{C}) = \Omega |N_\bullet wS'_\bullet \mc{C}|
  \]
  where $|-|$ denotes the geometric realization of a bisimplicial set. The spectrum
  $K(\mc{C})$ is defined by taking a (functorial) fibrant-cofibrant replacement
  in the stable model category of symmetric spectra \cite[Sec. 9]{mandell_may_schwede_shipley} of the spectrum whose $m$-th space is
  \[|N_\bullet w\underbrace{S'_\bullet\cdots S'_\bullet}_{m\ \mathrm{times}}
  \mc{C}|.\]
\end{defn}

The most important example of a Waldhausen category for the purposes of this
paper is the following:

\begin{example} \label{ex:chains}
  Let $\mc{E}$ be any exact category.  If we define the admissible monomorphisms
  to be the cofibrations and the isomorphisms to be the weak equivalences then
  the Waldhausen $K$-theory of $\mc{E}$ and Quillen's $K$-theory of $\mc{E}$ are
  equivalent.  Let $\Chstr(\mc{E})$ be the category of bounded chain complexes
  in $\mc{E}$; by  \cite[Theorem 1.11.7]{TT}, the incluson $\mc{E} \rto
  \Chstr(\mc{E})$ given by mapping $\mc{E}$ to the chain complexes concentrated
  at $0$ is an equivalence on $K$-theory.  The inverse map on $K_0$ is exactly
  the Euler characteristic.
\end{example}

  However, we need a stronger version of this. The following lemma is probably well-known to experts, but we could not find a statement in the literature. We have included it since it is needed below.

  \begin{lemma}\label{lem:chains}
    Let $R$ be a ring, and suppose
  that $\mc{E}$ is $\Modf_R$, the category of finitely generated $R$-modules.
  Let $\Chb(R)$ be the category of chain complexes of (possibly infinitely
  generated) $R$-modules whose cohomology is bounded and in $\Modf_R$.  The induced map \[
  K(\Modf_R) \rto K(\Chb(R))
  \]
  is an equivalence.
  \end{lemma}

  \begin{proof}
  We prove this by breaking the map up into three compositions:
  \[K(\Modf_R) \rto K(\Chstr(R)) \rto K(\Chfb(R)) \rto K(\Chb(R)).\] Here,
  $\Chfb(R)$ is the category of chain complexes of
  finitely generated $R$-modules which are homologically bounded.  The first of these is an equivalence by
  \cite[Theorem 1.11.7]{TT}.  The second is an equivalence by \cite[Section
  V.2.7.1]{weibel_kbook}.  Thus it remains to consider the third.  We prove that
  this is an equivalence by using Waldhausen's Approximation Theorem,
  \cite[Theorem 1.6.7]{waldhausen}.

  To apply the theorem we must show that for any map $f:A \to B$, where
  $A \in \Chfb(R)$ and $B\in \Chb(R)$ there exists a cofibration
  $g:A \hookrightarrow A'$ in $\Chfb(R)$ and a weak equivalence $f':A' \rto B$
  such that $f = f'g$.  Note that (by possibly first factoring $f$ as a
  cofibration followed by a weak equivalence) it suffices to check this when $f$
  is itself a cofibration; in particular $f$ is levelwise injective.  Suppose
  that the cohomology of $B$ is only nonzero below dimension $k$.  We define
  $A'_m = 0$ if $m > k$.  For $m \leq k$ we define $A_m'$ to be the submodule of
  $B_m$ generated by
  \begin{itemize}
  \item the image of $A_m$,
  \item a choice of generators for $H^m(B)$, and
  \item a choice of generators for the relations between the generators for
    $H^{m+1}(B)$.
  \end{itemize}
  (Since the cohomology is bounded we can just construct these starting at $m=k$
  and working downwards.)  Since each of these only involves a finite number of
  generators, $A'_m$ is finitely generated.  Thus $A'$ is levelwise a subcomplex
  of $B$ which has the same cohomology and satisfies the desired conditions.
  Thus the map $K(\Chfb(R)) \rto K(\Chb(R))$ is a weak equivalence.

  It is not necessarily the case that for all exact categories, $\Chb(\mc{E})$
  satisfies FFWC.  In Appendix~\ref{app:FFWC} we show that the specific cases we
  are interested in in this paper do; the results of this appendix also suggest
  that for almost all cases of $\mc{E}$ that are of interest FFWC holds.
\end{proof}

We now turn to defining $SW$-categories.  As much of the intuition necessary for
working with these comes from Waldhausen's $S_\bullet$-construction we omit the full definition and instead refer to \cite{campbell}.  Note also, we omit the ``weakness''
hypotheses, since we need to work inside $SW$-categories more strictly than
in Waldhausen categories.

\begin{defn}[{$SW$-category \cite[Definition 3.23]{campbell}}]
  An \emph{$SW$-category} is a category $\mc{C}$ equipped with three
  distinguished subcategories: cofibrations, complement maps, and weak
  equivalences, denoted $\co(\mc{C})$, $\mathbf{comp}(\mc{C})$ and
  $\w(\mc{C})$. The arrows in $\co(\mc{C})$ are denoted by hooked arrows
  $\hookrightarrow$ and the arrows in $\comp(\mc{C})$ are denoted by $\xrightarrow{\circ}$. Arrows representing weak equivalences are decorated with $\sim$. The category $\mc{C}$ is further equipped with a collection of \textit{subtraction sequences} $\{Z \hookrightarrow X \xleftarrow{\circ} U\}$. The data is required to satisfy axioms spelled out in \cite[Definition 3.7, Definition 3.13, Definition 3.24]{campbell} which mimic those of Waldhausen, replacing cofiber sequences with subtraction sequences.
\end{defn}

There are many examples of these kinds of categories. The following is the motivating example.

\begin{example}
  The category $\Var_{k}$ of varieties over a field $k$, is an $SW$-category,
  where cofibrations are closed immersions, complements are open immersions, and
  the weak equivalences are isomorphisms.  (This is proven in detail in the results leading up to
  \cite[Prop. 3.28]{campbell}) The subtraction
  sequences are defined as follows.  Given a closed inclusion $i: Z \to X$, $i$
  determines a homeomorphism of $Z$ onto a closed set $i(Z)$. We consider the
  open set $X - i(Z)$ and give it a scheme structure by restricting the
  structure sheaf on $X$. Thus
  \[
    X - Z = (X - i(Z), \mc{O}_{X}|_{X-Z}).
  \]
\end{example}

The definition of an $SW$-category is designed to provide exactly the structure
needed to carry out a Waldhausen-style $S_\bullet$-construction when we have
subtraction instead of quotients. However, we need one auxiliary definition.

\begin{defn}
   We define $\widetilde{\Ar}[n]$ to be the full subcategory of
  $[n]^{\text{op}} \times [n]$ consisting of pairs $(i, j)$ with $i \leq j$.
\end{defn}

\begin{defn}[$\widetilde{S}_\bullet$-construction]
  Let $\mc{C}$ be an $SW$-category. We define $\widetilde{S}_n \mc{C}$ to be the category with objects functors
  \[
  X: \widetilde{\Ar}[n] \to \mc{C}
  \]
  with morphisms natural transformations, subject to the conditions
  \begin{itemize}
  \item $X_{i, i} = \emptyset$, the initial object
  \item When $j < k$, $X_{i, j} \to X_{i, k}$ is a cofibration.
  \item The subdiagram
    \[
    X_{i, j} \longrightarrow X_{i, k} \longleftarrow X_{j, k}
    \]
    is a subtraction sequence for all $i < j < k$.
  \end{itemize}
\end{defn}
\begin{rmk}
  The $\tilde S_n \mc{C}$ assemble to form a simplicial category (i.e. a
  simplicial object in the category of small categories).  Each of these is
  itself an $SW$-category, so this construction can be iterated.  For details,
  see \cite[Lem.~3.34]{campbell}.
\end{rmk}

We may finally define the algebraic $K$-theory spectrum of an
$SW$-category.

\begin{defn}
  Let $\mc{C}$ be an $SW$-category.  Let $w \widetilde{S}_n \mc{C}$ denote the
  subcategory of weak equivalence of $\widetilde{S}_n \mc{C}$ and let
  $N_\bullet w\widetilde{S}_n \mc{C}$ denote the nerve of that category. The
  topological space $K^t(\mc{C})$ is defined by
  \[
  K^t(\mc{C}) = \Omega |N_\bullet w\widetilde{S}_\bullet \mc{C}|
  \]
  where $|-|$ denotes the geometric realization of a bisimplicial set. The spectrum
  $K(\mc{C})$ is defined by taking a (functorial) fibrant-cofibrant replacement
  in the stable model category of symmetric spectra of the spectrum whose $m$-th space is
  \[|N_\bullet w\underbrace{\tilde S_\bullet\cdots \tilde S_\bullet}_{m\ \mathrm{times}}
  \mc{C}|.\]
\end{defn}

There is a notion of exact functors for   $SW$-categories:
\begin{defn}
  Let $\mc{C}, \mc{D}$ be $SW$-categories. A functor $F: \mc{C} \to \mc{D}$ is called \emph{exact} if
  \begin{enumerate}
  \item $F$ preserves the initial object: $F(\emptyset) = \emptyset$.
  \item $F$ preserves subtraction sequences
  \item $F$ preserves pushout diagrams.
  \end{enumerate}
\end{defn}

\begin{proposition}\label{exact_map}
  Let $\mc{C}$ and $\mc{D}$ be $SW$-categories and let $F: \mc{C} \to \mc{D}$ be
  an exact functor. Then $F$ descends to a map of spectra $K(\mc{C}) \to
  K(\mc{D})$.
\end{proposition}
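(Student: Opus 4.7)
The plan is to define, level by level, a functor $F_n : \widetilde{S}_n \mc{C} \to \widetilde{S}_n \mc{D}$ by post-composition, check that these $F_n$ assemble into a simplicial functor of simplicial categories, restrict to weak equivalences, and then apply nerve, geometric realization, and functorial fibrant-cofibrant replacement to produce the desired map of spectra.

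First I would verify that post-composition with $F$ sends an object $X : \widetilde{\Ar}[n]\to\mc{C}$ of $\widetilde{S}_n\mc{C}$ to an object of $\widetilde{S}_n\mc{D}$. The three conditions defining such an object need to be checked for $F\circ X$: (i) $(F\circ X)_{i,i}=F(\emptyset)=\emptyset$ by axiom (1) of an exact functor; (iii) for each $i<j<k$, the subdiagram $F(X_{i,j})\to F(X_{i,k})\leftarrow F(X_{j,k})$ is a subtraction sequence, by axiom (2); and (ii) the requirement that $F(X_{i,j})\to F(X_{i,k})$ be a cofibration for $j<k$ then follows from (iii), since by axiom (4)(a) of an $SW$-category the left leg of any subtraction sequence is a cofibration. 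On morphisms, $F_n$ is defined by whiskering natural transformations with $F$, and functoriality is immediate.

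Next I would observe that the face and degeneracy maps of the simplicial category $\widetilde{S}_\bullet\mc{C}$ are defined by precomposition with inclusions or surjections of $\widetilde{\Ar}$-indexed diagrams, and hence commute trivially with post-composition by $F$. Thus the $F_n$ assemble into a simplicial functor $F_\bullet : \widetilde{S}_\bullet\mc{C}\to\widetilde{S}_\bullet\mc{D}$. Restricting to the wide subcategories of weak equivalences (using that $F$ preserves weak equivalences, which, together with preservation of the remaining structure, can be extracted from axiom (9) applied to diagrams built from $F$-images of subtraction sequences), and then taking nerves, produces a map of bisimplicial sets $N_\bullet w\widetilde{S}_\bullet\mc{C}\to N_\bullet w\widetilde{S}_\bullet\mc{D}$. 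Geometric realization and looping yield the map $K^t(\mc{C})\to K^t(\mc{D})$. Since the $\widetilde{S}_n$-construction preserves the $SW$-structure levelwise, $F_n$ is itself exact, so the construction iterates to give compatible maps between the $m$-th spaces of the pre-spectra defining $K(\mc{C})$ and $K(\mc{D})$; functorial fibrant-cofibrant replacement then delivers the spectrum map.

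The one genuinely nontrivial point, and the main obstacle in the argument, is that the definition of exact functor does not list preservation of cofibrations or of weak equivalences among its axioms; both must be deduced from preservation of subtraction sequences together with the $SW$-axioms, as sketched above. Once these preservation properties are in hand, the rest of the proof is a formal exercise in the functoriality of the nerve, geometric realization, and fibrant-cofibrant replacement.
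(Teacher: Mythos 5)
Your overall strategy is the standard one and is exactly what the paper intends: the paper's own ``proof'' is the single sentence that the claim follows directly from the definition, so your write-up is just the expected unwinding (post-compose with $F$ levelwise on $\widetilde{S}_n$, check the three defining conditions, note that the simplicial structure maps are precompositions and hence commute with $F$, restrict to $w$, take nerves and realizations, iterate). Your deduction that $F$ preserves cofibrations is fine: every cofibration sits in a subtraction sequence by axiom (4)(a), $F$ preserves subtraction sequences, and the left leg of a subtraction sequence is by definition a cofibration.

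The one step I do not accept as written is your claim that preservation of weak equivalences ``can be extracted from axiom (9) applied to diagrams built from $F$-images of subtraction sequences.'' Axiom (9) is a closure property of the target $SW$-category $\mc{D}$ (given a square in $\mc{D}$ whose verticals are already weak equivalences, the induced map on subtractions is one); it says nothing about whether $F$ sends arrows of $\w(\mc{C})$ into $\w(\mc{D})$, and no combination of the three listed axioms of an exact functor forces this when $\w(\mc{C})$ contains non-isomorphisms. You have in fact put your finger on a genuine omission in the paper's definition of exact functor: preservation of weak equivalences (and, if one prefers, of cofibrations and fibrations) should be required explicitly, as it is for exact functors of Waldhausen categories and in the corresponding definition in \cite{campbell}. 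The honest ways to close the gap are either to add that clause to the definition, or to note that in every $SW$-category appearing in this paper (e.g.\ $\Var_k$ and $\Varc_k$ up to the forgetful direction) the weak equivalences are isomorphisms, which any functor preserves. With that repair, the rest of your argument goes through.
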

\begin{proof}
This follows directly from the definition of exact functor.
\end{proof}

Most of the maps in which we are interested do not have $SW$-categories as
codomains; instead, we wish to be able to construct a functor from an
$SW$-category to a Waldhausen category.  This requires we use a different
definition in order to define the map of $K$-theories, since we cannot just hit
the source and target with the $\widetilde{S}_\bullet$ construction or
$S'_\bullet$-construction. In fact, because of the change in variance, the
proper notion is not a functor at all --- instead it is a triple of functors,
two covariant (for weak equivalences and cofibrations) and one contravariant
(for the complement maps).  One should keep in mind here the dual of compactly
supported cohomology, which is covariant on closed inclusions and contravariant
on open.

\begin{defn}[Based on {\cite[Defn. 5.2]{campbell}}]\label{pseudo_exact}
  Let $\mc{C}$ be an $SW$-category and $\mc{D}$ a Waldhausen category. A
  \emph{$W$-exact functor} (resp. {\em weakly $W$-exact functor}) from $\mc{C}$ to $\mc{D}$ is a triple of
  functors $(F_!, F^!, F^w)$ such that
  \begin{enumerate}
  \item $F_!$ is a functor $F_!: \co(\mc{C}) \to \mc{D}$ from the
    category of cofibrations in $\mc{C}$ to $\mc{D}$. For a map $i$ we
    abbreviate $F_! (i)$ to $i_!$.
  \item $F^!$ a contravariant functor $F^!: \comp(\mc{C})^{\text{op}} \to \mc{D}$,
    from the category of complement maps in $\mc{C}$ to $\mc{D}$. For a map $j$ we
    abbreviate $F^! (j)$ to $j^!$.
  \item $F^w$ is a functor $\w(\mc{C}) \to \w(\mc{D})$.
  \item For objects $X \in \mc{C}$, $F^! (X) = F_! (X) = F^w (X)$ and we denote all three by $F(X)$.
  \item For every \emph{cartesian} diagram in $\mc{C}$ on the left below, where the horizontal maps are cofibrations and the vertical maps are complements, we obtain a commuting diagram on the right
    \[
    \xymatrix{
    X \ar@{^{(}->}[r]^{j}\ar[d]_{i} & Z\ar[d]^{i'} \\
    Y \ar@{^{(}->}[r]_{j'}  & W
    }
    \qquad
        \xymatrix{
      F(X) \ar[r]^{j_!} & F(Z)  \\
      F(Y) \ar[u]^{i^!} \ar[r]^{j'_!} & F(W)  \ar[u]_{(i')^!}
    }
    \]

  \item For a subtraction sequence in $\mc{C}$ on the left, the square in $\mc{D}$ on the right is cocartesian (resp. weakly cocartesian).
     \[
    \xymatrix{
      Z \ar@{^{(}->}[r]^i  & X\\
       & X - Z \ar[u]^{\circ}_-j
    }
    \qquad
          \xymatrix{
        F(X) \ar[r]^{i_!} \ar[d] & F(Y) \ar[d]^{j^!} \\
        F(0) \ar[r] & F(Y-X)
      }
    \]

  \item For any commutative diagram on the left below, where the horizontal morphisms are cofibrations and the vertical morphisms are weak equivalences, the diagram on the right commutes
    \[
    \xymatrix{
      X \rcofib[r]^f\ar[d]_{i_X} & Y \ar[d]^{i_Y}\\
      X' \rcofib[r]_{f'} & Y'
    }
    \qquad
        \xymatrix{
      F(X) \ar[r]^{f_!} \ar[d]_{F^w(i_X)} & F(Y) \ar[d]^{F^w(i_Y)} \\
      F(X') \ar[r]_{(f')_!}  & F(Y')
        }
    \]
    A similar statement holds for complement maps.
  \end{enumerate}
\end{defn}

\begin{rmk}
  For notational ease, we denote $W$-exact functors by
  $(F_!, F^!, F^w): \mc{C} \to \mc{D}$ or even $F:\mc{C} \rto \mc{D}$, when no
  confusion can arise.
\end{rmk}

Having defined this, one can prove the following.

\begin{proposition}[Based on {\cite[Prop. 5.3]{campbell}}]
  Let $\mc{C}$ be an $SW$-category and $\mc{D}$ a Waldhausen category with FFWC
  and let $(F_!, F^!,F^w ): \mc{C} \to \mc{D}$ be a weakly $W$-exact functor. Then
  $(F_!, F^!,F^w )$ determines a spectrum map
  \[
  K(\mc{C}) \to^F K(\mc{D}).
  \]
\end{proposition}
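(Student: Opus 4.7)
The plan is to construct, for each $n \geq 0$, a functor $\tilde F_n\colon \tilde S_n \mc{C} \to S_n \mc{D}$, show that these assemble into a simplicial map (twisted by the canonical order-reversing involution $\sigma$ on $\Delta$), take nerves and geometric realizations, and finally iterate to produce a map of spectra. The central move is a reindexing $(i,j) \mapsto (n-j,\, n-i)$ which exchanges the ``open complement'' appearing in the subtraction sequences of $\tilde S_n \mc{C}$ with the ``cofiber'' appearing in $S_n \mc{D}$.

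Concretely, given $X\colon \widetilde{\Ar}[n] \to \mc{C}$ in $\tilde S_n \mc{C}$, I set $\tilde F_n(X)_{i,j} \coleq F(X_{n-j,\, n-i})$ on objects. For a morphism $(i,j) \to (i',j')$ in $\Ar[n]$, I factor it as $(i,j) \to (i',j) \to (i',j')$, apply $F^!$ to the closed inclusion $X_{n-j,\, n-i'} \hookrightarrow X_{n-j,\, n-i}$, and then apply $F_!$ to the open inclusion $X_{n-j,\, n-i'} \xrightarrow{\circ} X_{n-j',\, n-i'}$. The base change axiom for $W$-exact functors guarantees that the alternative factorization via $(i,j')$ yields the same composite, so $\tilde F_n(X)$ is a well-defined functor $\Ar[n] \to \mc{D}$.

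To verify the Waldhausen $S_n$-axioms, observe that $\tilde F_n(X)_{i,i} = F(\emptyset) = \emptyset$; for $j \leq k$ the map $\tilde F_n(X)_{i,j} \to \tilde F_n(X)_{i,k}$ is the image under $F_!$ of an open inclusion and hence a cofibration in $\mc{D}$ by the cofiber-sequence axiom; and for $i < j < k$ the subtraction sequence $X_{n-k,\,n-j} \hookrightarrow X_{n-k,\,n-i} \xleftarrow{\circ} X_{n-j,\,n-i}$ in $\mc{C}$ is sent precisely to the cofiber sequence $\tilde F_n(X)_{i,j} \hookrightarrow \tilde F_n(X)_{i,k} \to \tilde F_n(X)_{j,k}$ in $\mc{D}$, supplying the pushout square required by the $S_n$ diagram. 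Preservation of weak equivalences follows from the gluing axiom in $\mc{D}$ combined with axiom (9) of the $SW$-category, applied coordinate-wise.

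For simplicial naturality, a direct check of cases gives $d_k((n-1)-l) = n - d_{n-k}(l)$ for every $l$, so $\tilde F_\bullet$ intertwines $d_k$ on $\tilde S_\bullet \mc{C}$ with $d_{n-k}$ on $S_\bullet \mc{D}$ (and similarly for degeneracies); that is, $\tilde F_\bullet$ is an honest simplicial map into $\sigma^* S_\bullet \mc{D}$. Taking $N_\bullet w(-)$ and using the standard fact that geometric realization is $\sigma$-invariant yields a continuous map $|N_\bullet w\tilde S_\bullet \mc{C}| \to |N_\bullet w S_\bullet \mc{D}|$, and looping once gives $K^t(\mc{C}) \to K^t(\mc{D})$. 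Running the same construction in each simplicial coordinate of the iterated $\tilde S$ and $S$ constructions then produces the spectrum map $K(\mc{C}) \to K(\mc{D})$. The main obstacle throughout is the variance mismatch between $F_!$ and $F^!$, which is exactly what forces the $\sigma$-twist; the base change and cofiber sequence axioms are precisely what allow the $F_!$- and $F^!$-pieces to be glued coherently on each $\Ar[n]$-square, and the $\sigma$-invariance of geometric realization is what ultimately rescues the construction from the apparent simplicial reversal.
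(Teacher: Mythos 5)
The paper does not actually prove this proposition: it is quoted from \cite[Prop.~5.3]{campbell}, and the only commentary offered is the remark that the variance convention used here is dual to Campbell's and that the two are ``functionally equivalent'' because the target carries a biWaldhausen structure. Your argument is therefore a genuine proof where the paper has only a citation, and it handles the variance mismatch by a different device: instead of passing through the opposite Waldhausen category, you reindex via $(i,j)\mapsto(n-j,n-i)$ and absorb the resulting reversal of face maps into the front-to-back involution of $\Delta$, whose effect on geometric realizations is a natural homeomorphism. The combinatorics check out ($\delta^k((n-1)-l)=n-\delta^{n-k}(l)$ and the analogous identity for the codegeneracies both hold), the base-change axiom does force the two factorizations of a morphism of $\Ar[n]$ to agree, and the cofiber-sequence axiom does convert the subtraction sequences of $\tilde S_n\mc{C}$ into the pushout squares required of an object of $S_n\mc{D}$. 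Two details you elide but should record: the square to which you apply base change must be shown to be \emph{cartesian} in $\mc{C}$ (this follows from stability of subtraction sequences under base change plus uniqueness of complements, applied to the data of an object of $\tilde S_n\mc{C}$), and $F(\emptyset)\cong\emptyset$ requires an argument, e.g.\ the degenerate subtraction sequence $\emptyset\into\emptyset\xleftarrow{\circ}\emptyset$.

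The one real soft spot is weak equivalences. The map of spectra is assembled from the nerves of the subcategories $w\tilde S_n\mc{C}$, so $\tilde F_n$ must be defined on, and preserve, the weak equivalences of $\tilde S_n\mc{C}$. But a $W$-exact functor, as defined, consists only of $F_!$ on $\fib(\mc{C})$ and $F^!$ on $\co(\mc{C})$; a levelwise weak equivalence need not lie in either subcategory, so $\tilde F_n$ is not even defined on such a morphism without a further hypothesis (for instance that weak equivalences lie in, or functorially factor through, $\fib(\mc{C})\cap\co(\mc{C})$, as isomorphisms do by axiom (2), or an explicit additional clause in the definition of $W$-exactness). Your appeal to ``the gluing axiom in $\mc{D}$ combined with axiom (9)'' does not address this: those axioms govern how pushouts and subtraction interact with weak equivalences inside $\mc{C}$ and $\mc{D}$, not how $F$ acts on them. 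This gap is partly inherited from the definition as recorded in the paper (and it genuinely matters for the intended application to $\Varc_k$, whose weak equivalences are not isomorphisms), but a complete proof must state how $F$ is evaluated on weak equivalences and why the result is a weak equivalence in $\mc{D}$.
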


\begin{proof}
  It suffices to prove that this map exists before taking fibrant-cofibrant
  replacement, since our replacement is functorial.  But this follows exactly
  from the definitions of $K(\mc{C})$ and $K(\mc{D})$, since a weakly $W$-exact
  functor takes a simplex in the spaces defining $K(\mc{C})$ to a simplex in the
  spaces defining $K(\mc{D})$.
\end{proof}

As a consequence of the definition of $K$-theory, we obtain the following
result, which can be used to pick out interesting elements of $K_1$ of an
$SW$-category.  The proof is just as for Waldhausen $K$-theory as in \cite[\S
1.5]{waldhausen}.

\begin{proposition} \label{prop:xiX} Let $X$ be an object in an $SW$-category
  (resp. Waldhausen category) $\mc{C}$.  There is a homomorphism
  $\xi_X: \Aut(X) \rto K_1(\C)$, which is natural in $\C$ in the sense that for
  any exact (resp. weakly $W$-exact)
  functor $F: \C \rto \mc{D}$, $\xi_{F(X)}=\pi_1 F\circ\xi_X$.
\end{proposition}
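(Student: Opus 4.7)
The plan is to follow Waldhausen's construction from \cite[\S 1.5]{waldhausen} essentially verbatim, substituting $\tilde S_\bullet$ for $S_\bullet$ in the $SW$-setting.  Let $G = \Aut(X)$ and let $BG$ denote its nerve as a one-object groupoid.  Every automorphism is an isomorphism, hence a weak equivalence by axiom 2, so the inclusion of $G$ as the endomorphism monoid of $X$ in $w\mc{C}$ yields a map $BG \to |N_\bullet w\mc{C}|$.  Under the identification $\mc{C} \cong \tilde S_1 \mc{C}$ (respectively $\mc{C} \cong S_1 \mc{C}$ in the Waldhausen case), this factors through the bisimplicial realization $|N_\bullet w \tilde S_\bullet \mc{C}|$ at $\tilde S$-level $1$.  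Since $\tilde S_0 \mc{C} = *$, the simplicial space $q \mapsto |N_\bullet w \tilde S_q \mc{C}|$ is reduced, so its $1$-skeleton in the $\tilde S$-direction provides a natural map $\Sigma |N_\bullet w \mc{C}| \to |N_\bullet w \tilde S_\bullet \mc{C}|$.  Composing $\Sigma BG \to \Sigma|N_\bullet w \mc{C}| \to |N_\bullet w \tilde S_\bullet \mc{C}|$ and applying $\pi_2$ produces
\[
\xi_X \colon G = \pi_1 BG \;\xrightarrow{\;\cong\;}\; \pi_2 \Sigma BG \;\longrightarrow\; \pi_2 |N_\bullet w \tilde S_\bullet \mc{C}| = \pi_1 K^t(\mc{C}) = K_1(\mc{C}).
\]

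To verify that $\xi_X$ is a homomorphism, I would observe that for $\phi,\psi\in G$ the composition $\phi\circ\psi$ is witnessed by a $2$-simplex of $N_\bullet G$ whose boundary exhibits $[\phi\circ\psi]$ as the concatenation of $[\phi]$ and $[\psi]$ in $\pi_1 BG$; the suspension homomorphism $\pi_1 \to \pi_2 \Sigma$ sends concatenation of loops to the abelian-group operation in $\pi_2$, giving $\xi_X(\phi\circ\psi) = \xi_X(\phi) + \xi_X(\psi)$.  For naturality with an exact functor $F\colon\mc{C}\to\mc{D}$, the fact that $F$ preserves weak equivalences and commutes with the $\tilde S_\bullet$- (or $S_\bullet$-)construction gives a map of bisimplicial sets that intertwines the inclusion of $BG$ via $\phi \mapsto F(\phi)$, forcing $\pi_1(F_*) \circ \xi_X = \xi_{F(X)} \circ F|_G$.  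For a $W$-exact functor $(F_!,F^!)$, every $\phi\in G$ lies in $\fib(\mc{C})$ (as an isomorphism), so $F_!$ is defined on $G$ and sends it into $\Aut(F(X))$; chasing the construction of the spectrum map $K(\mc{C})\to K(\mc{D})$ in \cite[Prop. 5.3]{campbell} shows that only $F_!$ enters on the weak-equivalence direction, yielding $\pi_1(F_*)\circ \xi_X = \xi_{F(X)}\circ F_!$.

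The main obstacle I anticipate is basepoint bookkeeping: the map $BG \to |N_\bullet w\mc{C}|$ is based at $X$ rather than at the standard basepoint coming from $\emptyset$, but $\Sigma$ of a path-connected space is simply connected, so the resulting element of $\pi_2|N_\bullet w\tilde S_\bullet \mc{C}|$ is canonical.  The remaining details are standard in the Waldhausen case from \cite[\S 1.5]{waldhausen}, and the $SW$-modifications (using $\tilde{\Ar}$ in place of $\Ar$ and subtraction in place of quotient) leave the $\tilde S_0$ and $\tilde S_1$ levels and the weak-equivalence direction unchanged, so Waldhausen's argument transfers without essential modification.
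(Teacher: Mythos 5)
Your argument is precisely the one the paper intends: the paper's proof is a single sentence deferring to Waldhausen's construction, and your write-up correctly spells out that standard argument (the inclusion $BG \to |N_\bullet w\mc{C}| \cong |N_\bullet w\tilde S_1\mc{C}|$, the canonical map $\Sigma|N_\bullet w\mc{C}| \to |N_\bullet w\tilde S_\bullet\mc{C}|$ coming from reducedness in the $\tilde S$-direction, and the observation that only the weak-equivalence direction and the $\tilde S_0,\tilde S_1$ levels are involved, so the $SW$-modifications are immaterial). The one nit is that the suspension map $\pi_1 BG \to \pi_2\Sigma BG$ is the abelianization $G \to G_{ab}$ rather than an isomorphism for nonabelian $G$; since $K_1(\mc{C})$ is abelian this does not affect the existence of the homomorphism $\xi_X$.
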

\begin{proof}
  We begin by recalling how this statement works for Waldhausen categories (see
  \cite[p.341]{waldhausen}).  Given an automorphism $f: X \to X$ there is a
  corresponding 1-simplex in $w \mc{C}$. Composing with the map
  $|w \mc{C}| \to \Omega|w S_\bullet \mc{C}|$ gives the desired map
  $\Aut(X) \rto K_1 \mc{C}$. This construction is visibly natural for exact
  functors $F: \mc{C} \to \mc{D}$, and uses only that automorphisms include into
  $\pi_1 |w \mc{C}|$ and that we have a natural map $|w\mc{C}| \to |w S'_\bullet
  \mc{C}|$. As the same is true for the $S'_\bullet$-construction, an analogous
  statement holds.

  For SW-categories $\mc{C}$, the construction works the same way since the simplicial set $w\widetilde{S}_\bullet \mc{C}$ is still reduced and isomorphisms are a subcategory of weak equivalences.

  For a weakly W-exact functor, $F = (F_!, F^!, F^w)$ from an SW-category $\mc{C}$ to a Waldhausen category $\mc{D}$ we obtain a simplicial map $w \widetilde{S}_\bullet \mc{C} \to w \widetilde{S}'_\bullet \mc{D}$. For $X \in \mc{C}$, there are maps $\operatorname{Aut}(X) \to \operatorname{Aut} (F(X))$, and $|w\mc{C}| \to |w \mc{D}|$ induced by $F^w$. It is then clear that the diagram
  \[
  \xymatrix{
    \operatorname{Aut} (X) \ar[r]\ar[d] & \pi_1 w \mc{C} \ar[r] \ar[d] & \pi_1 (\Omega |w \widetilde{S}_\bullet \mc{C}|)\ar[d]\\
    \operatorname{Aut} F(X) \ar[r] & \pi_1 w \mc{D} \ar[r] &  \pi_1 (\Omega |w \widetilde{S}'_\bullet \mc{D}|)
  }
  \]
commutes, which is the statement of the proposition.

\end{proof}

An important tool in the general method discussed in
Section~\ref{sec:approach} is a lemma designed to identify when the
$K$-theories of two $SW$-categories are equivalent.  Although the conditions of
this lemma look complicated, in the geometric cases we are interested in they
are generally very natural.  For a detailed example of an application, see
Example~\ref{ex:compactification}.

\begin{lemma}\label{lem:Kequiv}
  Let $U: \mc{A} \rto \mc{C}$ be an exact functor of $SW$-categories.  Suppose
  that the following extra conditions hold:
  \begin{enumerate}
  \item $U$ is surjective on objects.  Moreover, one of the following holds:
    \begin{enumerate}
    \item For all cofibrations $f:X \hookrightarrow X'$, if $U(A') = X'$ then
      there exists a cofibration $A \hookrightarrow A'$ whose image under $U$ is
      $f$.
    \item For all cofibrations $f:X \hookrightarrow X'$, if $U(A) = X$ then
      there exists a cofibration $A \hookrightarrow A'$ whose image under $U$ is
      $f$.
    \end{enumerate}
  \item If for $f,g:A \to B$ in $\tilde S_n\mc{A}$, there exists a weak equivalence
    $h:X \to U(A)$ in $\tilde S_n\C$ such that $U(f)h = U(g)h$, then there exists a weak equivalence
    $\tilde{h}\colon Z\to A$ in $\tilde S_n \mc{A}$ such that $h$ factors through $U(\tilde{h})$ and such that $f\tilde{h}= g\tilde{h}$.

  \item Given any diagram
    \[\xymatrix{U(A) \ar@{^{(}->}[d]_{U(f)} & X \ar@{^{(}->}[d] \ar[l]_-\sim
        \ar[r]^-\sim & U(B) \ar@{^{(}->}[d]^{U(g)} \\
        U(A') & X' \ar[l]_-\sim \ar[r]^-\sim & U(B')}\]
    there exists a diagram
    \[\xymatrix{
        A \ar@{^{(}->}[d]_f & C \ar[l] \ar[r] \ar@{^{(}->}[d] & B \ar@{^{(}->}[d]^g \\
        A' & C' \ar[l] \ar[r] & B'
      }\]
    in $\mc {A}$ making the diagram
    \[\xymatrix{
        & & X \ar@{.>}[lld]|-\sim \ar@{.>}[ld]|-\sim \ar@{.>}[rd]|-\sim  \ar@{^{(}.>}[dd]\\
        U(A)  \ar@{^{(}->}[dd]_{U(f)} & U(C) \ar[l] \ar[rr]
        \ar@{^{(}->}[dd]& & U(B)  \ar@{^{(}->}[dd]^{U(g)} \\
        & & X' \ar@{.>}[lld]|-<<<<<<<\sim \ar@{.>}[ld]|-\sim \ar@{.>}[rd]|-\sim \\
        U(A') & U(C') \ar[l] \ar[rr] & & U(B')
      }\]
    commute.
  \end{enumerate}
  Then $K(U)$ is an equivalence.
\end{lemma}

\begin{proof}
  It suffices to prove that, for $n\ge 0$, the map
  $w\widetilde{S}_n(\mc{A})\to w\widetilde{S}_n(\C)$ induces a weak
  equivalence on geometric realizations. By Quillen's Theorem A \cite[Theorem
  A]{quillen}, it suffices to show that for any
  $\alpha\in w\widetilde{S}_n(\C)$, the undercategory
  $\alpha/w\widetilde{S}_n(\mc{A})$ is cofiltering.  Recall that $\alpha$ is
  represented by a diagram
  \[\xymatrix{X_1 \ar@{^{(}->}[r] &\cdots  \ar@{^{(}->}[r] & X_n}.\]

  For this, we first observe that by condition (1) the category
  $\alpha/w\widetilde{S}_n(\mc{A})$ is non-empty.  We use the second version of
  (1); the proof for the first works analogously.  Since $U$ is surjective on
  cofibrations by (1), there exists a cofibration $A_1 \to A_2$ that maps to
  $X_1 \to X_2$.  Now, since $U(A_2) = X_2$, there exists a cofibration
  $A_2 \rto A_3$ whose image under $U$ is $X_2 \rto X_3$.  Working inductively,
  we see that there is a sequence $A_1 \to\cdots\to A_n$ of cofibrations whose
  image under $U$ is $\alpha$.  To show that $\alpha/w\widetilde{S}_n(\mc{A})$
  is nonempty we take this sequence and the identity map from $\alpha$.

  We now show that $\alpha/w\tilde S_n(\mc{A})$ is co-filtering. We start by showing that any two parallel arrows are equalized by a third.  An object $A\in \alpha/w\tilde
  S_n(\mc{A})$ is a diagram
  \[\xymatrix{
      X_1 \ar@{^{(}->}[r] \ar[d]_\sim &\cdots  \ar@{^{(}->}[r] & X_n \ar[d]^\sim\\
      U(A_1) \ar@{^{(}->}[r] &\cdots  \ar@{^{(}->}[r] & U(A_n)}.\]
  Given two such objects $A$ and $B$, and two morphisms $f,g:A \rto B$, we must
  show that there exists a map $h\colon Z\to A$ in $\alpha/w\tilde S_n(\mc{A})$ such that $fh = gh$. This is exactly guaranteed by condition (2).

  It remains to show that for every pair of objects in
  $\alpha /w\widetilde{S}_n(\mc{A})$, there exists a third object which maps into
  each of them.  This is guaranteed by condition (3).
\end{proof}

Our main example is the $SW$-category of varieties together with
a choice of compactification; we show that forgetting the choices induces
an equivalence on $K$-theory.

\begin{defn} \label{def:Varc}
  Let $k$ be a field. We define the $SW$-category $\Varc_k$ as follows. Objects
  of $\Varc_k$ are open embeddings $X \xrightarrow{\circ} \overline{X}$
  where $X$ is a $k$-variety and $\overline{X}$ is a proper
  $k$-variety. Morphisms $(X \xrightarrow{\circ} \overline{X}) \rto (Y
  \xrightarrow{\circ} \overline{Y})$ are commuting squares
  \[
  \xymatrix{
    X \ar[r]^{\circ}\ar[d]_f & \overline{X} \ar[d]^{\overline f}\\
    Y \ar[r]^{\circ} & \overline{Y}
  }.
  \]
  A morphism
  $(X \xrightarrow{\circ} \overline{X}) \rto (Y \xrightarrow{\circ}
  \overline{Y})$ is a
  \begin{description}
  \item[cofibration] if  $f$ and  $\bar f$ are closed embeddings,
  \item[complement] if $f$ is an open embedding and $\overline{f}$ is a closed
    embedding, and
  \item[weak equivalence] if $f$ is an isomorphism.
  \end{description}
  A sequence
  $(Z \xrightarrow{\circ} \overline{Z}) \longrightarrow (X \xrightarrow{\circ}
  \overline{X}) \xleftarrow{\circ} (U \xrightarrow{\circ} \overline{U})$ is a
  subtraction sequence if the left map is a cofibration, the right map is a complement, $Z \hookrightarrow X \xleftarrow{\circ} U$ is a subtraction sequence in $\Var_k$, and the closed embedding $\overline{U}\to \overline{X}$ has set-theoretic image equal to the closure of $\overline{X}-\overline{Z}$ in $\overline{X}$.
\end{defn}

\begin{lemma}
  The category $\Varc_k$ with cofibrations, complements, weak equivalences, and
  subtraction defined as above satisfies the axioms of an $SW$-category.
\end{lemma}
\begin{proof} We verify the
  axioms from \cite{campbell} in turn. Note that limits and colimits are computed pointwise in $\Varc_k$. As a result, \cite[Defn. 3.7(1)]{campbell} is automatically satisfied. The axiom  \cite[Defn. 3.7(2)]{campbell} follows immediately because for any varieties $X,Y$, the embedding $X\to X\coprod Y$ is both open and closed. Axiom  \cite[Defn. 3.7(3)]{campbell} is also immediate, because isomorphisms in $\Varc_k$ are pointwise, and any isomorphism of varieties is simultaneously an open and closed embedding. Axiom  \cite[Defn. 3.7(4)]{campbell} holds for the same reason as in $\Var_k$. We now verify the remaining axioms in turn.
  \begin{itemize}
  \item \cite[Defn. 3.7(5a)]{campbell} Let $(Z\xrightarrow{\circ} \bar{Z})\into (X\xrightarrow{\circ} \bar X)$ be a cofibration.  Then $Z\into X$ determines the open embedding $X-Z\into X$ up to unique isomorphism, and similarly, $\bar Z\into \bar{X}$ determines the closed embedding $\overline{\bar X-\bar Z}\to \bar X$ up to unique isomorphism.
  \item \cite[Defn. 3.7(5b)]{campbell} Given a diagram
    \[
        \xymatrix{
            & (W,\bar{W}) \ar[d] \\
            (Z,\bar{Z})\, \rcofib[r] & (X,\bar{X}) & (U,\bar U) \ar[l]_\circ
        }
    \]
    where the bottom row is a subtraction sequence, we have a subtraction sequence
    \[
      \xymatrix{
             W\times_X Z\, \rcofib[r] & W & W \times_X Y \ar[l]_-\circ
      }
    \]
    in $\Var_k$. Moreover, because taking closure commutes with pullback, we have
    \begin{align*}
        \bar W\times_{\bar X} \bar U &\cong \bar W \times_{\bar X} \overline{\bar X - \bar Z}\\
        &=\overline{\bar W\times_{\bar X}(\bar X-\bar Z)}\intertext{where the outer $\overline{(-)}$ denotes closure in $\bar{W}$. Further, because the pullback of the complement of a subvariety is the complement of its pullback, this is isomorphic to}
        &=\overline{\bar W - \bar W\times_{\bar X} \bar Z}.
    \end{align*}
    Therefore, $\xymatrix{(W\times_X Z,\bar W\times_{\bar X}\bar Z) \rcofib[r]&
      (W,\bar W) & \ar[l]_\circ (W\times_X U,\bar W\times_{\bar X} \bar U)}$ is a subtraction sequence.
  \item \cite[Defn. 3.7(5c)]{campbell} Given a Cartesian square
      \[
            \xymatrix{
                (W,\bar W) \rcofib[r] \rcofib[d] & (X,\bar X) \rcofib[d] \\
                (Y,\bar{Y}) \rcofib[r] & (Z,\bar{Z})
                }
      \]
      in which all maps are cofibrations, the unique map $X-W\into  Z-Y$ is an open embedding. Further, there is a unique map $\bar X - \bar W\to \bar Z-\bar Y$ and, because $\bar X\to \bar Z$ is a closed emebdding, this extends uniquely to a closed embedding
      \[
        \overline{\bar X-\bar W}\to \overline{\bar Z-\bar Y}.
      \]
    \end{itemize}
    Axioms \cite[Defn. 3.13(1)]{campbell}, \cite[Defn. 3.13(2)]{campbell}, and \cite[Defn. 3.13(3)]{campbell} hold for the same
    reason they hold in $\Var_k$, namely \cite[Corollary 3.9]{schwede} and the
    remarks following.  Axioms \cite[Defn. 3.24(2)]{campbell} and \cite[Defn. 3.24(3)]{campbell} follow
    immediately from the definition of weak equivalences in $\Varc_k$ along with
    Axioms \cite[Defn. 3.13(1)]{campbell} and \cite[Defn. 3.7(5a)]{campbell} respectively.
\end{proof}

\begin{example} \label{ex:compactification}
  Let $\C = \Var_k$ and let $\mc{A} = \Varc_k$, with $U$ being the forgetful
  functor.  We claim that the conditions of Lemma~\ref{lem:Kequiv} hold in this example.  We check the
  conditions in turn:
  \begin{enumerate}
  \item By Nagata \cite[Theorem 4.3]{Nagata}\footnote{For a modern treatment of
      Nagata's Theorem, see \cite{Conrad}, esp. Theorem 4.1.} every $k$-variety
    $X$ admits an open embedding $X\xrightarrow{\circ} \overline{X}$ into a
    proper $k$-variety $\overline{X}$. Thus $U$ is surjective on objects.  Moreover, for any cofibration
    $X \hookrightarrow X'$, if $\bar X'$ is a compactification of $X'$ then the
    closure of $X$ in $\bar X'$ is a compactification of $X$.  Thus condition
    (a) holds.
  \item Weak equivalences in $\Var_k$ are isomorphisms. In this case this
    condition says that given two parallel maps $f$ and $g$ in $\tilde S_n\Varc_k$
    \[\xymatrix{
      (X_1,\bar X_1) \ar@{^{(}->}[r] \ar[d] &\cdots  \ar@{^{(}->}[r] & (X_n,\bar X_n) \ar[d]\\
      (Y_1,\bar Y_1) \ar@{^{(}->}[r] &\cdots  \ar@{^{(}->}[r] & (Y_n,\bar Y_n)}.
    \]
    such that $f$ and $g$ are equal when restricted to $X_i$, then there exists a weak equivalence $h$ in $\tilde S_n\Varc_k$
    \[\xymatrix{
      (X_1,\overline{\bar X_1}) \ar@{^{(}->}[r] \ar[d] &\cdots  \ar@{^{(}->}[r] & (X_n,\overline{\bar X_n}) \ar[d]\\
      (X_1,\bar X_1) \ar@{^{(}->}[r] &\cdots  \ar@{^{(}->}[r] & (X_n,\bar X_n)}.
    \]
    such that $fh=gh$. But for this, we can take $\overline{\bar X_i}$ to be the closure of $X_i$ in $\bar X_i$, and $h$ to be given by the canonical inclusions. Then, for each $i$, the $ith$ components of $fh$ and $gh$ agree on a dense set in $\overline{\bar X_i}$, and thus are equal. Therefore $fh=gh$.
  \item This property states that given a closed embedding of varieties
    $X \hookrightarrow X'$ together with induced embeddings of two choices of
    compactification $(X,\bar X) \hookrightarrow (X', \bar{X'})$ and
    $(X, \bar{\bar X}) \hookrightarrow (X', \bar{\bar X'})$, there exists a
    third choice of embeddings of compactifications that dominates both of
    these.  Note that if we can find a compactification of $X'$ that dominates
    both $\bar{X'}$ and $\bar{\bar{X'}}$ then we can take the closure of $X$ in
    it to produce the desired embedding.  Thus all that we must show is that for
    any two choices of compactification, there is a third that dominates them.
    For example, we can take the closure of $X$ inside $\bar{X}\times\bar{X}'$,
    as in \cite[\S IV-10.5]{SGA4.5}).
  \end{enumerate}
\end{example}

\section{$\ell$-adic Cohomology}
\label{sec:prelim-coh}
In this section we being by recalling the standard facts we will need about $\ell$-adic cohomology with its continuous Galois action. We then review the Godement resolution, and use this to establish the key technical Lemma~\ref{lem:all-tech}. We take \cite{SGA4.5} and \cite{FK} as standard references.

\subsection{Continuous Galois Representations}
Let $k$ be a field, let $k^s$ be a separable closure, and let $\ell\neq \characteristic(k)$ be a prime. The separable Galois group $\Gal(k^s/k)$ is a profinite group, and canonically carries the profinite topology
\begin{equation*}
    \Gal(k^s/k)=\varprojlim_{L/k~\text{fin., sep.}} \Gal(L/k)
\end{equation*}
where the finite groups $\Gal(L/k)$ are discrete, and the limit is in the
category of topological groups. Let $R$ be a ring. Recall that for a (discrete)
$R$-module $A$, a \emph{continuous representation} $\Gal(k^s/k)\to \Aut_R(A)$ is
one which factors through a finite subgroup
\[
\Gal(k^s/k)\to\Gal(L/k)\to \Aut_R(A)
\]
for some separable $L/k$. Denote by
$\Repc(\Gal(k^s/k);R)$ the category of finitely generated continuous
representations of $\Gal(k^s/k)$ over $R$. Recall the following
(cf. \cite[Arcata II.4.4]{SGA4.5}).
\begin{proposition}\label{prop:contGal}
    Let $k$ be a field, and $k^s$ a separable closure. Let $R$ be a ring. Denote by $\Sh(\Spec(k);R)$ the category of \'etale sheaves of (discrete) finitely generated $R$-modules on $\Spec(k)$. The functor
    \begin{align*}
        \Sh(\Spec(k);R)&\to \Repc(\Gal(k^s/k);R)\\
        F&\mapsto F|_{\Spec(k^s)}
    \end{align*}
    is an equivalence of categories.
\end{proposition}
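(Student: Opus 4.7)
The plan is to exploit the explicit structure of the small \'etale site of $\Spec(k)$: every \'etale $k$-scheme is a finite disjoint union $\coprod_i \Spec(L_i)$ with each $L_i/k$ a finite separable extension, so a sheaf $F$ is determined by its values $F(\Spec(L))$ together with restriction maps along inclusions $L\subset L'$. The fiber $F|_{\Spec(k^s)}$ is by construction the filtered colimit $\varinjlim_L F(\Spec(L))$ over finite separable $L\subset k^s$, and it carries a $\Gal(k^s/k)$-action induced by functoriality on the indexing system.

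First I would construct a candidate inverse $G$. Given a continuous representation $M$, define
\[
G(M)(\Spec(L)) \coleq M^{\Gal(k^s/L)}
\]
for each finite separable $L/k$, and extend to disjoint unions by products. I would verify the sheaf axioms by reducing to a finite Galois cover $\Spec(L')\to \Spec(L)$: the required descent isomorphism $G(M)(\Spec(L))\xrightarrow{\sim}G(M)(\Spec(L'))^{\Gal(L'/L)}$ is the tautological identity $M^{\Gal(k^s/L)}=(M^{\Gal(k^s/L')})^{\Gal(L'/L)}$.

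Next I would show that $G$ is a two-sided inverse to restriction. Starting from $M$, the fiber of $G(M)$ at $\Spec(k^s)$ is $\varinjlim_L M^{\Gal(k^s/L)}=M$, where the last equality uses the continuity hypothesis: any finite generating set of $M$ has an open common stabilizer and so lies in $M^{\Gal(k^s/L)}$ for $L$ sufficiently large. Conversely, starting from a sheaf $F$, Galois descent applied to the cofinal finite Galois sub-covers of $\Spec(k^s)\to\Spec(L)$ yields $F(\Spec(L))=(F|_{\Spec(k^s)})^{\Gal(k^s/L)}$, which is precisely $G(F|_{\Spec(k^s)})(\Spec(L))$.

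The main obstacle is the bookkeeping around the matching finiteness conditions. On the representation side, continuity together with finite generation of $M$ forces the action to factor through $\Gal(L/k)$ for some finite separable $L/k$, whence $G(M)(\Spec(L'))=M$ for every $L'\supset L$ and $G(M)$ takes finitely generated values throughout. In the other direction, if $F$ takes finitely generated values one must check that the stalk is a finitely generated $R$-module and that the resulting Galois action factors through a finite quotient; this reduces to choosing $L$ large enough that $F(\Spec(L))$ surjects onto the stalk, which is possible because any finite generating set of the stalk lifts back to sections at some finite level.
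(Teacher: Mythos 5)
The paper does not actually prove this proposition; it is quoted from SGA~4$\frac12$ (Arcata II.4.4). Your overall strategy --- identify the \'etale site of $\Spec(k)$ with finite separable extensions, take the stalk at $\Spec(k^s)$ as the forward functor, and invert it by $M\mapsto M^{\Gal(k^s/L)}$, using Galois descent for the sheaf axiom and continuity of the representation for the counit --- is the standard argument for this classical fact, and the core of it is correct.

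There is, however, a genuine gap in your final paragraph. To show that the stalk of $F$ is finitely generated you propose to ``choose $L$ large enough that $F(\Spec(L))$ surjects onto the stalk, which is possible because any finite generating set of the stalk lifts back to sections at some finite level.'' This is circular: producing a finite generating set of the stalk is precisely what is to be proved. Worse, under the literal reading of the hypothesis --- that each section module $F(\Spec(L))$ is a finitely generated $R$-module --- the claim is false. Take $k=\F_q$, $R=\Zb$, $G=\Gal(\Fqbar/\F_q)\cong\widehat{\Zb}$, and
\[
M=\bigoplus_{n\ge 1}\Zb[\zeta_{2^n}],
\]
with $G$ acting on the $n$-th summand through $G\onto\Zb/2^n\Zb$, a fixed topological generator acting by multiplication by $\zeta_{2^n}$. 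Every element of $M$ has open stabilizer, so $M$ is a discrete $G$-module. For an open subgroup $H\le G$ whose image in the $2$-adic quotient is $2^m\Zb_2$, the $n$-th summand contributes $0$ to $M^H$ when $n>m$ (multiplication by a nontrivial root of unity on an integral domain has no nonzero fixed points) and contributes everything when $n\le m$; hence every $M^H$ is finitely generated while $M$ is not. The sheaf $L\mapsto M^{\Gal(k^s/L)}$ therefore has finitely generated sections but a non--finitely generated stalk, and the functor of the proposition would not even land in $\Repc(\Gal(k^s/k);R)$. The repair is to read the finiteness condition on $\Sh(\Spec(k);R)$ as a condition on the stalk --- equivalently, to require $F$ to be locally constant, i.e.\ constant after pullback along some finite separable cover, which is what constructibility means over $\Spec(k)$ and is the hypothesis in SGA's statement. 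With that reading your lifting step becomes correct and non-circular: a finite generating set of the stalk is given, and it lifts to a finite level of the filtered colimit, after which the induced action factors through a finite quotient exactly as you say.
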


We are especially interested in continuous $\ell$-adic representations. Recall the following reformulation of the category of finitely generated $\Z_\ell$-modules (we follow the presentation of \cite[Ch I.12]{FK}). We consider diagrams
\begin{equation*}
    F\colon \Z_\geq\to \Mod(\Z_\ell),
\end{equation*}
where $\Z_\geq$ is the category whose objects are integers and where there is
a unique morphism $m \rto n$ whenever $m \geq n$. The category of diagrams is an abelian category, and in particular has images, kernels, cokernels, etc. defined pointwise. For $r\in\Z$, denote by $F[r]$ the
shifted diagram, i.e. with $F[r]_m\coloneqq F_{r+m}$.
\begin{definition}
    A diagram $F\colon \Z_\geq\to \Mod(\Z_\ell)$ satisfies:
    \begin{enumerate}
        \item the \emph{Mittag--Leffler} (\emph{ML}) condition if for every $n$, there exists $t\ge n$ such that for all $m\ge t$,
            \begin{equation*}
                \Image(F_m\to F_n)=\Image(F_t\to F_n),
            \end{equation*}
        \item the \emph{Mittag--Leffler--Artin--Rees} (\emph{MLAR}) condition if there exists some $t\ge 0$ such that for all $r\ge t$,
            \begin{equation*}
                \Image(F[r]\to F)=\Image(F[t]\to F).
            \end{equation*}
    \end{enumerate}
\end{definition}

\begin{definition}\label{def:MLAR}
    Define $\Pro_{MLAR}(\Z_\ell)$ to be the category in which objects are MLAR diagrams in which each $F_n$ is torsion, and for for two such diagrams $F$ and $G$,
    \begin{equation*}
        \hom_{\Pro_{MLAR}(\Z_\ell)}(F,G)\coloneqq\varinjlim_{r\ge 0} \hom(F[r],G).
    \end{equation*}
\end{definition}

The key purpose of Mittag--Leffler diagrams is that on such diagrams, the inverse limit is an exact functor.   The ML property is frequently satisfied. For instance, if all the modules $F_n$ are finite length, then $F$ is an ML diagram.
\begin{definition}\label{def:ell}
    An \emph{$\ell$-adic} diagram is a diagram $F$ such that $F_n=0$ for $n<0$ and for all $n$
    \begin{enumerate}
        \item $F_n$ is a module of finite length,
        \item $\ell^{n+1} F_n=0$, and
        \item the map $F_{n+1}\to F_n$ induces an isomorphism
            \begin{equation*}
                F_{n+1}/\ell^{n+1} F_{n+1}\cong F_n.
            \end{equation*}
    \end{enumerate}
    An \emph{A--R $\ell$-adic} diagram is any object of $\Pro_{MLAR}(\Z_\ell)$ which is isomorphic to an $\ell$-adic diagram. Denote by $\AR(\ell)\subset\Pro_{MLAR}(\Z_\ell)$ the full sub-category of A--R-$\ell$-adic diagrams.
\end{definition}

We can now give the promised reformulation of the category $\Modf(\Z_\ell)$ of finitely generated $\Z_\ell$-modules (see e.g. \cite[Proposition I.12.4]{FK}).
\begin{proposition}\label{prop:AR}
    The inverse limit
    \begin{equation*}
        \varprojlim\colon \AR(\ell)\to \Modf(\Z_\ell)
    \end{equation*}
    is an equivalence of exact categories.
\end{proposition}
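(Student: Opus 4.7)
The plan is to construct an explicit quasi-inverse functor $\Phi\colon \Mod_f(\Z_\ell)\to \AR(\ell)$ by $\Phi(M)_n\coloneqq M/\ell^{n+1}M$. For $M$ finitely generated over $\Z_\ell$, each quotient $M/\ell^{n+1}M$ is a finite, hence finite length, $\Z_\ell$-module killed by $\ell^{n+1}$, and the canonical surjections $M/\ell^{n+2}M\onto M/\ell^{n+1}M$ clearly induce isomorphisms modulo $\ell^{n+1}$. Thus $\Phi(M)$ is an $\ell$-adic system in the sense of Definition~\ref{def:ell}, so $\Phi$ lands in $\AR(\ell)$. The first composite $\varprojlim\circ\Phi\cong\mathrm{id}$ is then the classical statement that any finitely generated $\Z_\ell$-module is $\ell$-adically complete.

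For the other composite, given an $\ell$-adic system $F$, let $M\coloneqq\varprojlim F_n$. One first checks $M$ is finitely generated: since $F_0$ has finite length over $\Z_\ell$, choose finitely many elements of $M$ whose images generate $F_0=M/\ell M$, then use the $\ell$-adic completeness of $M$ (consequence of the Mittag--Leffler condition ensuring $\varprojlim$ behaves well) together with a topological Nakayama argument to conclude they generate $M$. The $\ell$-adic compatibility $F_{n+1}/\ell^{n+1}F_{n+1}\cong F_n$ then yields natural isomorphisms $M/\ell^{n+1}M\cong F_n$, i.e. $\Phi(\varprojlim F)\cong F$ as $\ell$-adic systems. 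For a general A--R $\ell$-adic system, by definition it is already isomorphic in $\Pro_{MLAR}(\Z_\ell)$ to a genuine $\ell$-adic system, so this case reduces to the previous one.

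Full faithfulness on morphisms requires working with the colimit-of-shifts definition
\[
\hom_{\Pro_{MLAR}}(F,G)=\varinjlim_{r\ge 0}\hom(F[r],G).
\]
A morphism $F[r]\to G$ induces a morphism on inverse limits (since shifting by $r$ does not change the limit of an ML system), giving a map $\hom_{\AR(\ell)}(F,G)\to\hom_{\Z_\ell}(\varprojlim F,\varprojlim G)$. Conversely, a $\Z_\ell$-module map $\varprojlim F\to\varprojlim G$ descends to compatible maps on the quotients $M/\ell^{n+1}M$, which under the isomorphism from the previous paragraph produce the required system of maps. The MLAR condition allows the necessary shifting to reconcile any discrepancy between a given A--R $\ell$-adic representation and the canonical one $M/\ell^{n+1}M$.

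Finally, the exact structure is preserved because, by the Mittag--Leffler property built into $\Pro_{MLAR}(\Z_\ell)$, the functor $\varprojlim$ carries short exact sequences of A--R $\ell$-adic systems to short exact sequences of finitely generated $\Z_\ell$-modules, and conversely the functor $\Phi$ is exact on $\Mod_f(\Z_\ell)$ since each $M/\ell^{n+1}M$ is flat in the relevant sense (or rather, the sequence of quotients of a short exact sequence of finitely generated $\Z_\ell$-modules is termwise exact in the limit because the relevant Tor groups vanish eventually under the $\ell$-adic filtration). The main technical obstacle is the identification of A--R $\ell$-adic systems with $\ell$-adic systems up to the colimit-over-shifts equivalence, i.e.\ verifying that the MLAR-morphism conventions in $\Pro_{MLAR}(\Z_\ell)$ are exactly what is needed to make the inverse-limit functor an equivalence rather than merely essentially surjective. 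Everything else reduces to standard properties of $\ell$-adic completion.
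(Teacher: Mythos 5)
The paper gives no proof of this proposition; it simply cites Freitag--Kiehl (Proposition I.12.4), and your argument is precisely the standard one found there: the quasi-inverse $M\mapsto\{M/\ell^{n+1}M\}$, completeness of finitely generated $\Z_\ell$-modules, the shift-by-$r$ morphism convention for full faithfulness, and pro-triviality of the $\mathrm{Tor}_1(-,\Z/\ell^{n+1})$ systems (equivalently Artin--Rees) for exactness. So the proposal is correct and takes essentially the same approach as the paper's source.
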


Recall that $\Z_\ell$ is a profinite ring, with the profinite (equivalently ``adic'') topology.  Similarly, for any finitely generated $\Z_\ell$-module $A$, the group $\Aut_{\Z_\ell}(A)$ is canonically a topological group, with the profinite topology. We can use the proposition to give a similar reformulation of the category $\Repc(\Gal(k^s/k);\Z_\ell)$ of continuous representations $\Gal(k^s/k)\to\Aut_{\Z_\ell}(A)$. Mutatis mutandis, we obtain from Definitions \ref{def:MLAR} and \ref{def:ell} a notion of $\ell$-adic diagrams of continuous $\Gal(k^s/k)$ representations, and a category $\RepcAR(\Gal(k^s/k);\ell)$ of such. Concretely, objects are given by diagrams
\begin{equation*}
    \cdots\to F_n\to F_{n-1}\to\cdots
\end{equation*}
where for each $n$, $F_n$ is a continuous representation of $\Gal(k^s/k)$ in finitely generated (discrete) $\Z/\ell^n\Z$-modules and the analogous conditions to those of Definition \ref{def:ell} hold. Analogously to Proposition \ref{prop:AR}, we have the following.
\begin{proposition}\label{prop:GalAR}
    The inverse limit
    \begin{equation*}
        \varprojlim\colon \RepcAR(\Gal(k^s/k);\ell)\to \Repc(\Gal(k^s/k);\Z_\ell)
    \end{equation*}
    is an equivalence of exact categories.
\end{proposition}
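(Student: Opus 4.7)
The plan is to reduce Proposition \ref{prop:GalAR} to the non-equivariant statement of Proposition \ref{prop:AR} by verifying that the continuous $\Gal(k^s/k)$-action transports correctly across the inverse limit in both directions. Observe that there is an evident commuting square of functors
\[
\xymatrix{
\Repc^{AR}(\Gal(k^s/k);\ell) \ar[r]^-{\varprojlim} \ar[d] & \Repc(\Gal(k^s/k);\Z_\ell) \ar[d]\\
\AR(\ell) \ar[r]_-{\varprojlim} & \Mod_f(\Z_\ell)
}
\]
where the vertical arrows are the evident forgetful functors. Since the bottom row is an equivalence and the vertical arrows are faithful, it suffices to check (i) essential surjectivity of the top row, (ii) full faithfulness at the level of morphism sets, and (iii) exactness in both directions.

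For essential surjectivity, I would start with a continuous representation on a finitely generated $\Z_\ell$-module $M$. The natural candidate preimage is the system $F_n \coleq M/\ell^{n+1}M$, each of which inherits an action of $\Gal(k^s/k)$ via the Galois-equivariant quotient $M \onto M/\ell^{n+1}M$. Since each $F_n$ is finite and the action $\Gal(k^s/k) \rto \Aut_{\Z_\ell}(M)$ is continuous, the action on $F_n$ factors through a finite quotient $\Gal(L_n/k)$, so each $F_n$ is a continuous representation in the discrete sense required. The axioms of Definition \ref{def:ell} are satisfied for purely module-theoretic reasons. Applying $\varprojlim$ recovers $M$ together with its Galois action by Proposition \ref{prop:AR} and the compatibility of the Galois actions across levels.

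For full faithfulness, I would argue as follows. A morphism in $\Repc^{AR}(\Gal(k^s/k);\ell)$ from $F$ to $G$ is an element of $\varinjlim_r \hom(F[r],G)$, where the hom-sets consist of projective system maps $F[r] \rto G$ that are levelwise Galois-equivariant. Since $\varprojlim F[r] = \varprojlim F$, applying $\varprojlim$ produces a Galois-equivariant map of the limits, and this assignment is independent of the representative $r$. Conversely, given a Galois-equivariant morphism $\varprojlim F \rto \varprojlim G$ of continuous representations, Proposition \ref{prop:AR} produces a unique underlying morphism in $\AR(\ell)$; the Galois-equivariance of the limit map forces the representing system map $F[r] \rto G$ to be Galois-equivariant on each level (by naturality of the quotients $\varprojlim F \onto F_n/(\text{image of shift})$), so it lies in the equivariant hom-set. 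This establishes a bijection on morphisms.

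For exactness, exact sequences in $\Repc^{AR}(\Gal(k^s/k);\ell)$ are defined as those which are exact on the underlying AR-$\ell$-adic systems and whose arrows are Galois-equivariant, and similarly for $\Repc(\Gal(k^s/k);\Z_\ell)$ over $\Mod_f(\Z_\ell)$. Exactness is therefore inherited directly from Proposition \ref{prop:AR}. The main subtlety I expect is in the morphism comparison: one must verify that the shift-up-to-isomorphism structure of the AR category interacts correctly with equivariance, i.e. that equivariance can always be detected after passing to the limit and restored on a suitable shift. This follows from the fact that the maps $\varprojlim F \rto F_n$ are jointly injective after a suitable shift for any MLAR system, so any equivariance check on the limit pulls back to an equivariance check on each level of some representative shift.
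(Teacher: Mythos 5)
Your proof is correct and takes essentially the same route the paper intends: the paper gives no argument for this proposition, asserting only that it follows \emph{mutatis mutandis} from Proposition~\ref{prop:AR}, and your reduction via the forgetful functors to the non-equivariant equivalence --- transporting the continuous Galois action across $\varprojlim$ on objects, morphisms, and exact sequences --- is exactly that analogy made explicit. The only quibble is the phrase ``jointly injective'' near the end: what you actually need in order to descend equivariance from the limit to each level is that $\varprojlim F\to F_n$ is \emph{surjective} for an $\ell$-adic representative of the system (after which A--R systems are handled by pro-isomorphism), which holds and makes your argument go through.
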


\subsection{$\ell$-adic Sheaves}

We now extend the above to sheaves. We consider schemes $X$ for which $\ell\in \Oc(X)$ is invertible. Mutatis mutandis, we obtain from Definition \ref{def:MLAR} a definition of MLAR diagrams of $\ell$-torsion \'etale sheaves, and the category $\Pro_{MLAR}(\Sh(X),\Z_\ell)$ of such.
\begin{definition}
    Let $X$ be a scheme and $\ell$ a prime invertible on $X$. An \emph{$\ell$-adic sheaf} on $X$ is diagram $F\colon\Z_\geq\to \Sh(X;\Z)$ of \'etale sheaves of abelian groups on $X$ such that
    \begin{enumerate}
        \item the sheaves $F_n$ are constructible for all $n$,
        \item $F_n=0$ for $n<0$, and
        \item the map $F_{n+1}\to F_n$ induces isomorphisms
            \begin{equation*}
                F_{n+1}\otimes_{\Z}\Z/\ell^n\Z\cong F_n.
            \end{equation*}
    \end{enumerate}
    An \emph{A--R-$\ell$-adic sheaf} is any object of $\Pro_{MLAR}(\Sh(X),\Z_\ell)$ which is isomorphic to an $\ell$-adic sheaf. We denote the category of A--R-$\ell$-adic sheaves by $\Sh(X;\Z_\ell)$.
\end{definition}

The following is the key theorem we use for $\ell$-adic sheaves (cf. \cite[Theorem I.12.15]{FK}). Recall that a map $f\colon X\to S$ is {\em compactifiable} if $f$ can be factored as an open embedding $j\colon X\into\bar{X}$ followed by a proper map $\bar{f}\bar{X}\to S$. Given a compactifiable map $f\colon X\to S$ and $\nu\ge 0$, recall (cf. \cite[Definition/Proposition I.8.6]{FK}) that $R^\nu f_!$ denotes the functor
\[
    R^{\nu}\bar{f}_\ast \circ j_!\colon \Sh(X)\to \Sh(S)
\]
By {\em loc. cit.}, this functor is independent of the choice of $j$ and $\bar{f}$, and is referred to as the $\nu$th higher direct image with compact supports.

\begin{theorem}[Finiteness Theorem for $\ell$-adic Sheaves]
    Let $f\colon X\to S$ be a compactifiable map, let $\ell$ a prime invertible on $X$, and $n\mapsto F_n$ an A--R-$\ell$-adic sheaf on $X$. For $\nu\ge 0$, let Then for all $\nu\ge 0$, the system $n\mapsto R^\nu f_!(F_n)$ is an A--R-$\ell$-adic sheaf on $S$.
\end{theorem}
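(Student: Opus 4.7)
The plan is to reduce from the A--R $\ell$-adic setting to honest $\ell$-adic systems, apply the classical constructibility/finiteness theorem for $R^\nu f_!$ with torsion coefficients pointwise in $n$, and then verify that the resulting projective system of sheaves again satisfies the A--R $\ell$-adic conditions by running long exact sequences against a Bockstein-style filtration.

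First I would replace the given A--R $\ell$-adic system by an isomorphic honest $\ell$-adic system $\{F_n\}$ in the sense of Definition~\ref{def:ell}; this is legitimate because the conclusion is about membership in $\AR(\ell)$, which is invariant under isomorphism in $\Pro_{MLAR}$, and morphisms in $\Pro_{MLAR}$ only remember the eventual behavior of the system. After this reduction each $F_n$ is constructible, $\ell$-torsion (in fact killed by $\ell^{n+1}$), and the reduction maps $F_{n+1}\otimes \Z/\ell^{n+1}\Z\to F_n$ are isomorphisms.

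Next, for each fixed $n$, I would invoke the classical finiteness theorem for compactifiable morphisms with torsion constructible coefficients (as in \cite{SGA4.5}): $R^\nu f_!(F_n)$ is a constructible \'etale sheaf on $S$, and it is annihilated by $\ell^{n+1}$ since $F_n$ is. Thus each term of the candidate system $\{R^\nu f_!(F_n)\}$ lies in the correct category of constructible torsion sheaves; what remains is to check the $\ell$-adic compatibility (or, weakened, A--R $\ell$-adic compatibility) between different values of $n$. For this I would use the short exact sequences of $\ell$-adic sheaves
\begin{equation*}
    0 \to F_m \to F_{n+m+1} \to F_n \to 0
\end{equation*}
(coming from the isomorphisms $F_{n+m+1}\otimes \Z/\ell^{n+1}\Z\cong F_n$ and the $\ell^{m+1}$-torsion of $F_m$). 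Applying $R^\bullet f_!$ yields long exact sequences that, combined with the fact that each $R^\nu f_!F_n$ has been shown constructible (hence finitely generated as a $\Z/\ell^{n+1}$-module on each geometric stalk), produce the desired uniform bound on the failure of $\{R^\nu f_!F_{n+1}\otimes \Z/\ell^{n+1}\Z\to R^\nu f_!F_n\}$ to be an isomorphism. Constructibility of $R^{\nu\pm 1} f_!$ gives a uniform integer $t$ bounding the kernel and cokernel, which is precisely the MLAR shift needed for the output system to lie in $\AR(\ell)$.

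The main obstacle, as usual in $\ell$-adic foundations, is the last step: obtaining uniformity in $n$ for the A--R conditions. Pointwise constructibility of $R^\nu f_!F_n$ is not enough by itself; one must extract a single shift $t$ that works for all $n$ simultaneously. The standard way to achieve this is a N\"otherian induction on $S$ together with generic base change and proper base change to reduce to the case where the cohomology sheaves become locally constant on a stratification, after which the uniform bound $t$ comes from the finite number of strata and the finite rank of the stalks at geometric points of each stratum. Modulo this bookkeeping, everything else is a formal consequence of the classical finiteness theorem and the long exact sequence of $R^\bullet f_!$.
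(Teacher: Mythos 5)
First, a point of comparison: the paper does not prove this theorem at all --- it is quoted from Freitag--Kiehl \cite[Theorem I.12.15]{FK}, so your sketch can only be measured against the classical argument (due to Deligne in SGA~5 and reworked in \cite{FK}, \S I.12). Your overall shape --- replace the A--R system by an honest $\ell$-adic one, apply the torsion finiteness theorem to $R^\nu f_!F_n$ term by term, then establish the compatibilities between levels --- matches the classical strategy, and the first two steps are fine.

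The gap is exactly where you locate it, and the fix you propose does not close it. Two concrete problems. (1) The sequence $0\to F_m\to F_{n+m+1}\to F_n\to 0$ is not exact for a general $\ell$-adic system: the kernel of $F_{n+m+1}\to F_n$ is $\ell^{n+1}F_{n+m+1}$, which need not be isomorphic to $F_m$ (take the constant system $F_n=\Z/\ell\Z$, where that kernel is $0$). One can still run long exact sequences using multiplication by powers of $\ell$, but the bookkeeping is genuinely different from what you wrote. (2) More seriously, the uniform shift $t$ cannot be extracted from ``constructibility of $R^{\nu\pm1}f_!F_n$ plus a stratification with finite-rank stalks'': each such bound is a priori a function of $n$, and a stratification adapted to $F_n$ may refine as $n$ grows; generic and proper base change by themselves produce no single $t$ valid for all $n$ at once. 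The classical proof obtains uniformity by a different device: one packages the system into the graded sheaf $\bigoplus_n F_n$ as a module over a Noetherian graded ring (essentially $\Z_\ell[t]$ acting through the transition maps), shows that $\bigoplus_n R^\nu f_!F_n$ is a finitely generated graded module in the constructible sense --- this is where the torsion finiteness theorem actually enters --- and then applies the Artin--Rees lemma for graded Noetherian modules to get one shift $t$ for the entire system. Without that (or an equivalent) mechanism, the final step of your argument is an assertion rather than a proof.
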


Using the smooth base change theorem (see e.g. \cite[Theorem I.7.3]{FK}) and Propositions \ref{prop:contGal} above, we immediately deduce the following.
\begin{corollary}\label{cor:ARres}
    Let $k$ be a field, $k^s$ a separable closure, and let $X$ be a variety of finite type over $k$. Then for any A--R-$\ell$-adic sheaf $F$ on $X$, a choice of compactification $j\colon X\into \bar{X}$ and a choice of functorial flasque resolution $Q_{\bar X}$ on $\Sh(\bar X;\Z_\ell)$ determine an extension of the assignment
    \begin{equation*}
        F\mapsto H^\ast_c(X_{/k^s};F)
    \end{equation*}
    to a functor
    \begin{align*}
        \Sh(X;\Z_\ell)&\to\Chb(\RepcAR(\Gal(k^s/k);\ell))\\
        F&\mapsto \Gamma_{\bar X} Q_{\bar X}(j_!F)_{|_{k^s}}
    \end{align*}
\end{corollary}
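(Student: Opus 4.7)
The plan is to realize the functor as a composition of four operations: extension by zero along the chosen compactification $j$, the chosen functorial flasque resolution on $\bar X$, global sections over $\bar X_{/k^s}$, and passage through the equivalences of Propositions~\ref{prop:contGal} and~\ref{prop:GalAR}. Given an A-R-$\ell$-adic sheaf $F=\{F_n\}$ on $X$, I would first form $\{j_!F_n\}$; since $j$ is an open immersion, $j_!$ is exact and preserves constructibility, so this is an A-R-$\ell$-adic sheaf on $\bar X$. Applying the functorial flasque resolution levelwise gives a compatible family $j_!F_n\to I_n^\bullet$, and then forming $C_n^\bullet\coloneqq \Gamma(\bar X_{/k^s}; I_n^\bullet)$ yields a projective system of chain complexes of discrete $\Z/\ell^{n+1}$-modules with a natural continuous $\Gal(k^s/k)$-action arising from the base change to $k^s$ (via Proposition~\ref{prop:contGal}).

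To show the output lies in $\Chb(\Repc(\Gal(k^s/k);\Z_\ell))$, I would verify boundedness and finiteness of the associated inverse limit. Artin vanishing yields $H^\nu C_n^\bullet=H^\nu_c(X_{/k^s};F_n)=0$ for $\nu>2\dim X$, so the smart truncation $\tau_{\le 2\dim X}C_n^\bullet$ is bounded uniformly in $n$. The Finiteness Theorem, applied to $\bar X\to \Spec k$ (which is compactifiable since $\bar X$ is proper), shows that each cohomology system $n\mapsto H^\nu C_n^\bullet$ is A-R-$\ell$-adic, so Proposition~\ref{prop:GalAR} identifies its inverse limit with a finitely generated continuous $\ell$-adic Galois representation. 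Functoriality in $F$ follows from the functoriality of $j_!$, the chosen resolution, $\Gamma$, truncation, and the equivalence.

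The main obstacle is upgrading these statements from cohomology to the chain level: the flasque terms $\Gamma(\bar X_{/k^s}; I_n^\bullet)$ are not themselves finitely generated as $\Z/\ell^{n+1}$-modules, so one cannot naively take a termwise inverse limit and land in $\Chb(\Repc(\Gal(k^s/k);\Z_\ell))$. The remedy is to stay within the A-R framework of Definition~\ref{def:MLAR}: the truncated system $\{\tau_{\le 2\dim X}C_n^\bullet\}_n$ is a bounded chain complex in the category $\Repc^{AR}(\Gal(k^s/k);\ell)$ of A-R-$\ell$-adic systems, and Proposition~\ref{prop:GalAR}, being an equivalence of exact categories, induces an equivalence on bounded chain complexes $\Chb(\Repc^{AR}(\Gal(k^s/k);\ell))\simeq \Chb(\Repc(\Gal(k^s/k);\Z_\ell))$ under which this complex maps to the desired target.
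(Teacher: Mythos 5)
Your proposal is correct and follows essentially the same route as the paper, which gives no argument at all beyond citing the Finiteness Theorem for $\ell$-adic sheaves, the smooth base change theorem, and Propositions~\ref{prop:contGal} and~\ref{prop:AR} and declaring the corollary immediate. If anything you supply more detail than the authors do, in particular by flagging the chain-level finiteness issue (the flasque terms of $R\Gamma$ are not finitely generated) that the paper only confronts later, in Step~3 of Section~\ref{sec:zeta}, by asserting that the complexes are A--R-$\ell$-adic and working in $\Chb(\Repc^{AR}(\Gal(k^s/k);\ell))$ before applying Proposition~\ref{prop:GalAR}.
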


\subsection{The Godement Resolution}\label{sec:gode}
Recall that the Godement resolution $\mc{G}^\bullet(\mc{F})$ of an \'etale sheaf $\mc{F}$ on a $k$-variety $X$ is defined inductively as follows (see e.g. \cite[p. 129]{FK}). Fix algebraic closures $\Omega_0:=\bar{k}$ and $\Omega_n:=\bar{k(t_1,\ldots,t_n)}$ for all $n$. Let $M_X$ be the set of geometric points
\[
    M_X:=\{x\colon \Spec(\Omega_n)\to X ~|~n\in\N\}
\]
Note that every point in $X$ is associated to at least one geometric point in $M_X$ (since algebraically closed fields over $k$ are distinguished up to isomorphism only by their transcendance degree, e.g. \cite[Theorem VI.1.12]{hungerford}).  In particular, this is what allows us to define the Godement resolution with respect to this set of points of the \'etale site of $X$.

Now define
\[\mc{G}_X^0(\mc{F}):=\prod_{x\in M_X} x_\ast x^\ast \mc{F}\]
Equivalently, $\mc{G}_X^0(\mc{F})=\delta_{X\ast} \delta_X^\ast \mc{F}$ where $\delta_X\colon \coprod_{M_X}x \to X$ is the inclusion of points.

The functor $\mc{G}_X^0(-)$ is exact (it is exact on stalks), and comes with a diagonal inclusion
\[
    \mc{F}\to \mc{G}_X^0(\mc{F})
\]
given by mapping a section to its stalks. Define
\[
    \mc{G}_X^1(\mc{F}):=\mc{G}_X^0(\coker(\mc{F}\to\mc{G}_X^0(\mc{F}))).
\]
In general, define
\[
    \mc{G}_X^{i+1}(\mc{F}):=\mc{G}_X^0(\coker(\mc{G}_X^{i-1}(\mc{F})\to \mc{G}_X^i(\mc{F})))
\]
Then the assignment $\mc{F}\mapsto \mc{G}_X^\bullet(\mc{F})$ has the following properties:
\begin{enumerate}
    \item it defines an exact functor $\mc{G}_X^\bullet\colon \Sh(X)\to\Chb(\Sh(X))$,
    \item the sheaves $\mc{G}^i_X(\mc{F})$ are flabby for all $i$,
    \item the map $\mc{F}\to\mc{G}_X^\bullet(\mc{F})$ is a resolution.
\end{enumerate}
In particular, for any ring $R$ the functor
\[
    S_X:=\Gamma_X \circ \mc{G}_X^\bullet\colon \Sh(X;R)\to\Chb(\Sh(\Spec(k);R))
\]
is exact. Using the smooth base change theorem (see e.g. \cite[Theorem I.7.3]{FK}) and Propositions \ref{prop:contGal} and \ref{prop:GalAR} above, we immediately deduce the following strengthening of Corollary~\ref{cor:ARres}.
\begin{corollary}\label{cor:Gode}
    Let $k$ be a field, $k^s$ a separable closure. Fix a Godement resolution $\mc{G}^\bullet_{(-)}$ for $k$-varieties as above, and write $S_{(-)}:=\Gamma_{(-)}\circ\mc{G}^\bullet_{(-)}$ as above. Let $X$ be a variety of finite type over $k$. Then for any A--R-$\ell$-adic sheaf $F$ on $X$, a choice of compactification $j\colon X\into \bar{X}$ determines an extension of the assignment
    \begin{equation*}
        F\mapsto H^\ast_c(X_{/k^s};F)
    \end{equation*}
    to an exact functor
    \begin{align*}
        \Sh(X;\Z_\ell)&\to\Chb(\RepcAR(\Gal(k^s/k);\ell))\\
        F&\mapsto S_{\bar X}(j_!F)_{|_{k^s}}
    \end{align*}
\end{corollary}

\subsection{A Technical Lemma}
We now establish the key technical lemma of the paper. As above, we fix a separable closure $k^s$ of $k$ and a Godement resolution $\mc{G}^\bullet_{(-)}$ for $k$-varieties. As above, we adopt the notation:
\[
    S_{(-)}:=\Gamma_{(-)}\mc{G}^\bullet_{(-)}
\]
i.e. $S_{(-)}$ is the strict model of the sheaf cohomology functor associated to the Godement resolution.

\begin{lemma} \label{lem:all-tech} Let $X$ be a $k$-variety, let $\iota\colon U\into X$ be the inclusion of an open subvariety. Let $j\colon X\rto Y$ be a proper map for which $j\circ\iota$ is an embedding. Then for all sheaves $\mc{F}$ on $U$, there is an induced isomorphism (of chain complexes of sheaves on $\Spec(k)$)
\[ j_\Gamma\colon S_X (\iota_! \mc{F})\to^\cong S_Y (j_! \iota_!\mc{F}).\]
This isomorphism is natural in the following two senses:
\begin{enumerate}
    \item For any map of sheaves $g: \mc{F} \rto \mc{F}'$ on $U$, the diagram
        \[\xymatrix{
            S_X(\iota_!\mc{F}) \ar[r]^{j_\Gamma} \ar[d]_{S_X \iota_!(g)} & S_Y (j_! \iota_!\mc{F})
            \ar[d]^{S_Y j_! \iota_! (g)} \\
            S_X (\iota_!\mc{F}') \ar[r]^{j_\Gamma} & S_Y (j_! \iota_! \mc{F}')
        }\]
        commutes.
    \item Given a commuting diagram
        \begin{equation*}
            \xymatrix{
                U \ar[r]^{(j_1)|_U} \ar[d]^{\iota_1} & V \ar[d]^{\iota_2} \\
                X \ar[r]^{j_1} & Y \ar[r]^{j_2} & Z
            }
        \end{equation*}
        where $\iota_i$ is an open embedding and $j_i$ is a proper map such that $j_i\circ \iota_i$ is an embedding for $i=1,2$, and $j_1(U)\subset V$, then for any sheaf $\mc{F}$ on $U$,
        \begin{equation*}
            (j_2)_\Gamma\circ(j_1)_\Gamma=(j_2\circ j_1)_\Gamma.
          \end{equation*}
\end{enumerate}
\end{lemma}

\begin{proof}
  Since $j$ is proper, $j_! = j_*$, so it suffices to prove the statement of the
  lemma with $j_*$ substituted for $j_!$.

  Now let $\iota\colon U\to X$ be an open embedding, let $\mc{F}$ be a sheaf on $U$, and let $j\colon X\to Y$ be a proper map such that $j\iota\colon U\to Y$ is an embedding. We claim that there is a canonical isomorphism (of chain complexes in $\Sh(Y)$)
  \[
        j_\ast\mc{G}_X^\bullet(\iota_!\mc{F})\to^\cong \mc{G}_Y^\bullet(j_\ast \iota_!\mc{F}).
  \]
  Granting the claim, we obtain $j_\Gamma$ by applying $\Gamma_Y$ to this isomorphism and pre-composing with the natural isomorphism $\Gamma_X(-)\cong \Gamma_Y\circ j_\ast(-)$.

  In the notation of Section~\ref{sec:gode}, the key observation underpinning the claim is that any map of $k$-varieties $f\colon W\to Z$ determines a map of sets $f\colon M_W\to M_Z$ (just by post-composing $f$ with each map $x\colon\Spec(\Omega_n)\to W$, and thus a commuting diagram (of non-finite type varieties)
    \[
        \xymatrix{
        \coprod_{M_W} w \ar[r]^{\delta_f} \ar[d]^{\delta_W} & \coprod_{M_Z} z \ar[d]^{\delta_Z} \\
        W \ar[r]^f & Z
        }
    \]
    If $f$ is an open embedding, then by the definition of $f_!$, we have a
    canonical ``base change'' isomorphism
    \begin{equation*}
        \delta_Z^\ast f_! \mc{F}\cong \delta_{f\ast}\delta_W^\ast \mc{F}.
    \end{equation*}
    Since $f$ is an embedding, $\delta_f$ is an embedding as well, and is
    both open and closed because the varieties in question are discrete. Therefore
    $\delta_{f!}=\delta_{f\ast}$.

    We are now ready to prove the claim by induction.  For the base of the induction,
    \begin{align*}
        j_\ast \mc{G}^0_X(\iota_!\mc{F})&:=j_\ast \delta_{X\ast}\delta_X^\ast \iota_!\mc{F} \\
        &\cong j_\ast \delta_{X\ast}\delta_{\iota\ast} \delta_U^\ast \mc{F} \qquad \text{(because $\iota$ is an embedding)}\\
        &\cong \delta_{Y\ast} \delta_{j\ast} \delta_{\iota\ast} \delta_U^\ast \mc{F} \qquad \text{(by the functoriality of $(-)_\ast$)}
    \end{align*}
    Finally, because $j\circ \iota$ is an embedding, by inspection of the definitions, we see that there is a canonical isomorphism
    \begin{equation*}
        \delta_{j\ast}\delta_{\iota\ast}\delta_U^\ast\mc{F}\cong \delta_Y^\ast j_\ast \iota_!\mc{F}
    \end{equation*}
    (i.e. $y_\ast y^\ast j_\ast \iota_!\mc{F}=y_\ast y^\ast \mc{F}$ for $y\in M_U\subset M_Y$ and $y_\ast y^\ast j_\ast \iota_!\mc{F}=0$ otherwise.)
    We conclude that
    \begin{align*}
        j_\ast \mc{G}^0_X(\iota_!\mc{F})\cong \delta_{Y\ast}\delta_Y^\ast j_\ast \iota_!\mc{F}=:\mc{G}^0_Y(j_\ast\iota_!\mc{F}).
    \end{align*}
    This settles the base of the induction. For the induction step, note that in the argument above, we have
    \begin{align*}
        j_\ast \mc{G}^0_X(\iota_!\mc{F})&\cong j_\ast \delta_{X\ast}\delta_{\iota\ast} \delta_U^\ast \mc{F}
       \cong j_\ast \iota_!\delta_{U\ast}\delta_U^\ast \mc{F}
    \end{align*}
    where the second isomorphism follows by inspection from the definition  of $\iota_!$. This implies that
    \begin{align*}
        \mc{G}_X^1(\iota_!\mc{F})&:=\coker(\iota_!\mc{F}\to\iota_!\delta_{U\ast}\delta_U^\ast\mc{F})\cong \iota_!\mc{G}_U^1(\mc{F}).
    \end{align*}
    In particular, $\mc{G}_X^1(\iota_!\mc{F})$ is again an extension by $0$ of a
    sheaf on $U$. Because $j$ is proper, $j_\ast$ preserves colimits, so
    \begin{align*}
        j_\ast\mc{G}_X^1(\iota_!\mc{F})\cong \mc{G}_Y^1(j_\ast\iota_!\mc{F})
    \end{align*}
    and we can now apply the same argument as above to conclude that there exists a canonical isomorphism
    \begin{align*}
        j_\ast \mc{G}_X^i(\iota_!\mc{F})\cong \mc{G}_Y^i(j_\ast\iota_!\mc{F})
    \end{align*}
    for all $i$, thus proving the claim.

    It remains to show the two naturality properties. The naturality with respect to morphisms of sheaves $g\colon \mc{F}\to \mc{F}'$ follows immediately from the functoriality of the constructions above.

    For the second naturality property, a direct inspection of the construction above shows that the naturality property follows from the properness of the maps $j_i$ and the universal properties of cokernels and products (using that $\delta_{X\ast}\delta_X^\ast\mc{F}=\prod_{x\in M_X} \mc{F}_x$). Concretely, the naturality follows by writing out the explicit definitions of the maps and sheaves on $\Spec(k)$ and then repeatedly using that if $I\to^{\iota} J\to^j K$ are maps of sets such that $\iota \colon I\to J$ and $j\iota\colon I\to K$ are injective, then for any sheaf of abelian groups $A$ on the discrete space $I$, there are canonical isomorphisms
    \begin{align*}
        \prod_I A_i&\cong \prod_I A_i\times \prod_{J-I} 0\cong \prod_J (\iota_! A)_j\\
        &\cong \prod_I A_i\times\prod_{K-I} 0 \cong \prod_K ((j\iota)_! A)_k.
    \end{align*}
\end{proof}

\section{The General Approach, by example} \label{sec:approach}

In this section we prove a warm-up to our main result in order to illustrate the general approach. Recall the SW-category $\Varc_k$ of Definition~\ref{def:Varc} (consisting of $k$-varieties with a choice of compactification), and the exact functor
\[
    \Varc_k\to\Var_k
\]
given by forgetting the compactification.

We now prove the following ``warm-up'' to Theorem~\ref{thm:main}, in order to illustrate our general approach.
\begin{theorem} \label{thm:derEuler}
    Let $k$ be a subfield of $\CC$, and $R$ a commutative ring.  Let $\Chb(R)$ be the category of homologically finite and bounded chain complexes of $R$-modules. Let $[C_c^\bullet(-;R)]$ denote the class of compactly supported $R$-valued singular cochains in the homotopy category. Then the functor
    \begin{align*}
        \Var_k^{\times}&\to \Ho(\Chb(R))^{\op}\\
        X&\mapsto [C_c^\bullet(X(\CC);R)]
    \end{align*}
    admits a strict model as a span of weakly $W$-exact functors
    \[
        \xymatrix{
            \Var_k & \Varc_k \ar[l]_\sim \ar[r]^-G & \Chb(R)^{\op}
            }
    \]
    where the left-facing map induces an equivalence on $K$-theory.
\end{theorem}
\begin{proof}
    For conciseness we abuse notation and write $X$ for $X(\CC)$ throughout this proof. We also suppress the coefficients on cohomology, which are always be taken to be $R$.

    The functor $U\colon\Varc_k\to\Var_k$ forgets the choice of compactification.

    We define the functor $G$ as follows.  Let \[G(X,\bar X) = C_{\mathrm{sing}}^*(\bar X, \bar X - X),\] be the chain complex of singular cochains with coefficients in $R$ on the topological space
    $\bar X$ which vanish on all chains contained in $\bar{X}-X$.  The functor
    \[
        G^!:\co(\Varc_k) \rto \Chb(R)^\op
    \]
    sends a closed embedding $(Z, \bar Z) \rto (X, \bar X)$ to the usual pullback on cohomology. Note that
    this is well-defined, since (as $Z$ is closed in $X$)  $\bar Z - Z \subseteq \bar X - X$. The functor
    \[
        G_!:\comp(\Varc_k) \rto \Chb(R)^\op
    \]
    sends an open embedding $(U, \bar U) \rto (X, \bar X)$ to the map which extends a cochain on $\bar U$
    to a cochain on $\bar X$ by defining it to be $0$ on all cochains which are not contained in $\bar U$.  The functor $G^w: \w(\Varc_k) \rto \Chb(R)^\op$ is defined similarly, with the analogous observation that a weak equivalence $(X,\bar X) \to (X, \bar{\bar X})$ produces a map of pairs $(\bar X, \bar X - X) \rto (\bar{\bar X}, \bar{\bar X}-X)$.

    By construction, the span
    \[
        \xymatrix{
            \Var_k & \Varc_k \ar[l] \ar[r]^-G & \Chb(R)^{\op}
            }
    \]
    provides a strict model for the functor
    \[
        X\mapsto [C_c^\bullet(X(\CC);R)]
    \]
    It remains to prove that $G$ is actually weakly $W$-exact.  Conditions (1)-(4) hold by definition.  We check the others in turn.
    \begin{itemize}
        \item[(5)] By the definitions of the maps, it suffices to check that the diagram commutes on the compactification components.  Thus the statement we need to check is that for any cartesian diagram
            \[\xymatrix{\bar X \ar[r]^j \ar[d]_i & \bar Z \ar[d]^{i'} \\
                \bar Y \ar[r]^{j'} & \bar W}\]
            where all maps are closed embeddings, $(i')^!\circ j'_! = j_! \circ i^!$. In this case, both of the compositions around the diagram take a cochain $\alpha: C_*(\bar Y, \bar Y - Y) \rto R$ to the cochain $\alpha':C_*(\bar Z,\bar Z - Z) \rto R$ which takes a chain $\sigma: \Delta^n \to \bar Z$ to $0$ if $\sigma$ doesn't factor through $\bar X$, and to $\alpha(i\circ \sigma)$ if it does.  Thus the diagram commutes.
  \item[(6)] Suppose that we are given a subtraction sequence \[\xymatrix{(Z,\bar Z) \rcofib[r]^i & (X, \bar X) & (U, \bar U) \ar[l]^-{\circ}_-j}.\]
        Note that we have the following commutative diagram of pairs of spaces:
        \[\xymatrix{ (\bar U, \bar U - U) \ar[d] \\
            (\bar X, \bar X - U) & (\bar X, \bar X - X) \ar[l] & (\bar X - U, \bar X-X) \ar[l] \\
            & & (\bar Z, \bar Z - Z) \ar[u] \ar[lu]}\]
        Applying $C^*$ gives an exact sequence of cochain complexes across the middle, as it is the sequence associated to the triple $(\bar X, \bar X - U, \bar X - X)$.  By excision, all vertical isomorphisms
        become quasi-isomorphisms on cochains.  Thus we have the diagram
        \[\xymatrix{ C^*(\bar U, \bar U - U) \ar[rd]^{i_!} \ar@/_/[d]_\sim  \\
            C^*(\bar X, \bar X - U) \ar[r] \ar@{.>}[u] & C^*(\bar X, \bar X
            - X) \ar[r] \ar[rd]_{j^!} & C^*(\bar X - U, \bar X-X) \ar[d]^\sim\\
            & & C^*(\bar Z, \bar Z - Z) }\]
        where the dotted arrow is the result of applying $C^*$, and the curved arrow is the quasi-inverse that extends chains by $0$.  Both of the solid arrow triangles commute.  Thus the sequence
        \[G(U, \bar U) \to^{i_!} G(X,\bar X) \to^{j^!} G(Z,\bar Z)\] is weakly equivalent to an exact sequence, and thus is weakly exact.
  \item[(7)] The condition for complement maps holds because $G^w$ and $G^!$ are given by the same functor.  The condition for cofibrations holds because in both cases we extend a cochain on $\bar{X'}$ to a cochain on $\bar Y$ which is zero outside of $X'$; the only difference is that applying the maps in
    one direction extends the cochain to $\bar{Y'}$ first, and then to $\bar{Y}$, and in the other direction it extends to $\bar X$ first and then to $\bar Y$.
  \end{itemize}
\end{proof}

Now let $R$ be any ring, and let $\chi_R$ denote the compactly supported $R$-valued Euler characteristic of homologically finite topological spaces, i.e.
\[
    \chi_R(X):=\sum_{i=0}^\infty(-1)^i\rk_R H_c^i(X;R).
\]
We now deduce Theorem~\ref{cor:HWeuler} as a corollary of Theorem~\ref{thm:derEuler}.
\begin{corollary} \label{cor:Reuler}
    Let $k$ be a subfield of $\Cb$. Then the strict model of Theorem~\ref{thm:derEuler} determines a contractible space of maps of $K$-theory spectra
    \begin{equation*}
	    \mathbf{X}_R: K(\Var_k)\to K(R),
    \end{equation*}
    which fit into a commuting square
    \begin{equation*}
          \xymatrix@C3em{
            K_0(\Var_{\Cb}) \ar[r]^-{\pi_0 \mathbf{X}_R} \ar[d]^{\chi_R} &
            K_0(R) \ar[d]^{\rk}\\
            \Z \ar@{=}[r] & \Z}
	\end{equation*}
    after applying $\pi_0$.
\end{corollary}
\begin{proof}
    As discussed in Lemma~\ref{lem:chains}, the inclusion of an exact category as the chain complexes concentrated at $0$ is an equivalence, with the ``Euler characteristic'' as the inverse.  Thus, we obtain from Theorem~\ref{thm:derEuler} a diagram of $W$-exact functors
    \[
        \xymatrix{
            \Var_k & \Varc_k \ar[l] \ar[r]^G & \Chb(R)^{\op} & \Modf(R)^{\op} \ar[l]
        }
    \]
    By Example~\ref{ex:compactification} and Lemma~\ref{lem:chains}, the left-facing arrows induce weak equivalences on $K$-theory.

    Since a weak equivalence between fibrant-cofibrant spectra has a contractible space of inverses, we obtain a contractible space of maps
    \[
        \mathbf{X}_R:\colon K(\Var_k)\to^h K(\Varc_k)\to^{K(G)} K(\Chb(R))\to^{\chi} K(\Modf(R))=K(R)
    \]
    where $h$ is any choice of inverse equivalence to $K(\Varc_k)\to K(\Var_k)$ and $\chi$ is any choice of inverse to $K(\Modf(R))\to K(\Chb(R))$. The commuting square after applying $\pi_0$ now follows by inspection.
\end{proof}

Granting the statement of Theorem~\ref{thm:main}, we can similarly deduce the following generalization of Corollary~\ref{cor:HWzeta}.
\begin{corollary} \label{cor:gZeta}
    Let $k$ be a field, $k^s$ a separable closure, $\ell\neq \characteristic(k)$ a prime, and $g\in\Gal(k^s/k)$ any element.  Then the strict model of Theorem~\ref{thm:main} determines a contractible space of maps of $K$-theory spectra
    \begin{equation*}
       g_\ast\circ \Zeta\colon K(\Var_{k})\to K(\Aut(\Z_\ell)).
    \end{equation*}
    Passing to $K_0$ and taking the characteristic polynomial, we recover the ``$g$-Zeta-function''
    \begin{align*}
        Z_g\colon K_0(\Var_k)&\to W(\Z_\ell)\\
        [X]&\mapsto \prod_{i=0}^{2\dim(X)}\det(1-g^\ast t;H^i_{\et,c}(X\times_{k} k^s;\Z_\ell))^{(-1)^i}.
    \end{align*}
\end{corollary}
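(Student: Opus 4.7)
The plan is to construct $g_\ast\circ\Zeta$ as the composition of $\Zeta$ from Theorem~\ref{thm:main} with the map of $K$-theory spectra induced by an ``evaluate at $g$'' functor. Recall that $\Aut(\Z_\ell)$ denotes the exact category whose objects are pairs $(M,\phi)$, where $M$ is a finitely generated $\Z_\ell$-module and $\phi\in\Aut_{\Z_\ell}(M)$, with morphisms the $\phi$-equivariant $\Z_\ell$-maps, and short exact sequences those whose underlying sequences of $\Z_\ell$-modules are exact.

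First, define
\[
\mathrm{ev}_g\colon \Repc(\Gal(k^s/k);\Z_\ell)\longrightarrow \Aut(\Z_\ell),\qquad (M,\rho)\longmapsto (M,\rho(g)),
\]
acting as the identity on underlying $\Z_\ell$-module maps. Since $\rho(g)$ is automatically an automorphism and a short exact sequence in $\Repc(\Gal(k^s/k);\Z_\ell)$ is by definition one which is short exact after forgetting the Galois action, $\mathrm{ev}_g$ is visibly an exact functor of exact categories. Viewing both as Waldhausen categories via bounded chain complexes (Example~\ref{ex:chains}), we obtain an induced map of spectra
\[
g_\ast\colon K(\Repc(\Gal(k^s/k);\Z_\ell))\longrightarrow K(\Aut(\Z_\ell)),
\]
and we define $g_\ast\circ\Zeta$ to be its composition with $\Zeta$. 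On the level of the assignments being lifted, this sends $X$ to the underlying $\Z_\ell$-module $H^\ast_{\et,c}(X\times_k k^s;\Z_\ell)$ equipped with the automorphism given by the action of the single element $g\in\Gal(k^s/k)$, as claimed.

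For the $K_0$ statement, the theorems of Almkvist \cite{almkvist_3} and Grayson \cite{Gr79} give an injective ring homomorphism
\[
K_0(\Aut(\Z_\ell))\hookrightarrow W(\Z_\ell),\qquad [(M,\phi)]\longmapsto \det(1-\phi t;\,M).
\]
Composing with $\pi_0(g_\ast\circ\Zeta)$ and using that, per Example~\ref{ex:chains}, the $K_0$-class of a bounded complex is the alternating sum of the classes of its cohomology (so that the characteristic polynomial is the alternating product of the characteristic polynomials on each $H^i$), yields exactly
\[
[X]\longmapsto \det(1-g^\ast t;\,H^\ast_{\et,c}(X\times_k k^s;\Z_\ell))\in W(\Z_\ell).
\]

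There is no genuine obstacle here beyond bookkeeping: exactness of $\mathrm{ev}_g$ is immediate from unwinding definitions, and the only substantive ingredient beyond Theorem~\ref{thm:main} is the Almkvist--Grayson identification of $K_0(\Aut(-))$ with a subring of the big Witt vectors via the characteristic polynomial, which is precisely what the corollary's hypothesis invokes.
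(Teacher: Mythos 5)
Your proposal is correct and matches the paper's (essentially unstated) argument: the paper simply asserts that the results of Almkvist and Grayson imply the corollary, and the intended mechanism is exactly your exact ``evaluate at $g$'' functor inducing $g_\ast$ on $K$-theory, followed by the Almkvist--Grayson characteristic-polynomial identification on $K_0$. No further comment needed.
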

\begin{proof}
    The map $g^\ast$ is the map induced on $K$-theory by the functor
    \begin{align*}
        g^\ast \colon\Repc(\Gal(k^s/k);\Z_\ell)&\to \Aut(\Z_\ell)\\
            \Gal(k^s/k)\circlearrowleft V&\mapsto (V,g)
    \end{align*}
    For the map $\Zeta$, consider the diagram of $W$-exact functors
    \[
        \xymatrix{
            \Var_k & \Varc_k \ar[l] \ar[r] & \Chb(\Repc(\Gal(k^s/k);\Z_\ell))^{\op} & \Repc^{fg}(\Gal(k^s/k))^{\op} \ar[l]
        }
    \]
    from Theorem~\ref{thm:main} (where the right-most arrow is again just inclusion of complexes concentrated in degree 0). The construction of
    \[
        \Zeta\colon K(\Var_k)\to K(\Repc(\Gal(k^s/k);\Z_\ell))
    \]
    now follows along identical lines to the construction of $X$ in the proof of Corollary~\ref{cor:Reuler}.

    The commuting square on $\pi_0$ now follows from results of Almkvist \cite{almkvist_3}, explicated beautifully in Grayson \cite{Gr79}.
\end{proof}

\begin{remark}
    Taking $k=\F_q$ and $g$ to be Frobenius, the ``$g$-Zeta-function'' is precisely the classical zeta function (see e.g. the discussion on p. 171-174 of \cite{FK}); in this special case Corollary~\ref{cor:gZeta} is exactly
    Corollary~\ref{cor:HWzeta}.
\end{remark}

\section{Proof of Theorem~\ref{thm:main}} \label{sec:longproof}
We now prove Theorem~\ref{thm:main}, along the lines indicated in the proof of Theorem~\ref{thm:derEuler}; the main difference is in the intricacy of the construction of the $W$-exact functor.

By Proposition~\ref{prop:GalAR}, it suffices to construct a $W$-exact functor
\begin{equation}\label{Wexactfun}
  F:\Varc_{k}\to \Chb(\RepcAR(\Gal(k^s/k);\ell))^\op
\end{equation}
such that the cohomology of the inverse limit of the A--R $\ell$-adic chain complex produces compactly supported $\ell$-adic cohomology (with the Galois action induced by the action on $k^s$).

\begin{remark}\mbox{}
    \begin{enumerate}
        \item We prove in this section that the functor which assigns compactly supported cochains, defined via the Godement resolution, to the constant $\ell$-adic sheaf $\Z_\ell$ takes subtraction sequences of $k$-varieties to exact sequences of homologically bounded chain complexes of sheaves on $\Spec(k)$.
        \item Our proof applies to the constant sheaf $\Z_\ell$, considered as a uniform system of sheaves on all $k$-varieties. The key property we use is that for any map $f\colon Y\to X$, we have an isomorphism $f^\ast \Z_{\ell,X}\cong \Z_{\ell,Y}$. Our proof does not apply to any collection of sheaves on $k$-varieties for which this identity ever fails. In particular, we do \textbf{not} prove that there is a chain level realization of any cohomology functor which is exact on the category of \emph{all} sheaves.
        \item Our proof exploits a key difference between sheaf cohomology and singular cohomology of topological spaces.  Namely, given a disjoint decomposition of a space $X=Z\cup (X-Z)$, it is not the case that a singular chain is either contained in $Z$ or in $X-Z$.  By contrast, every point of $X$ is contained in one piece of the decomposition or the other.  The Godement resolution is defined purely in terms of the points of the space; this is what allows our construction of compactly supported cochains to give an exact functor, as opposed to merely a weakly exact functor.
    \end{enumerate}
\end{remark}

As in Section~\ref{sec:gode}, we fix once and for all a Godement resolution as our functorial flasque resolution for \'etale sheaves over a variety $X$,
\begin{equation*}
    \mathcal{F}\longmapsto  \mathcal{G}_X^\bullet\mathcal{F}.
\end{equation*}

\begin{notation}
  In this section, we make the following shorthand definitions.
  \begin{description}
  \item[$A^n_X$] denotes, for a $k$-variety $X$, the sheaf $(\Z/\ell^n\Z)_{X|_{k^s}}$.
  \item[$S_X$] denotes the functor $\Gamma_X \mc{G}^\bullet_X$.  Note that this functor lands
    in the category of chain complexes, \emph{not} in the derived category, which is why we avoid the standard notation $R\Gamma$.
  \end{description}
\end{notation}

Following the discussion in Section \ref{sec:prelim}, we proceed by defining functors $F_n^w, F_!^n$ and $F^!_n$ taking values in $\Chb(\Repc(\Gal(k^s/k);\Zb/\ell^n\Zb))$. We then show these fit together to define an A--R-$\ell$-adic system of continuous Galois representations.  As all of the $\Gal(k^s/k)$-actions follow from the action on $k^s$, we omit these from the notation, but the implication should be that all functors record this data
as well.

Throughout the construction below, we make repeated use of the isomorphism (of chain complexes of sheaves on $\Spec(k)$) of Lemma~\ref{lem:all-tech}
\[
    j_\Gamma\colon S_X(\iota_!\mathcal{F})\to^\cong S_Y(j_!\iota_!\mathcal{F})
\]
associated to an open embedding $\iota U\into X$ and a proper map $j\colon X\to Y$. Recall that $(-)_!$ applied to an open embedding denotes the ``extension by 0'' functor.

We begin by constructing the functor
\begin{equation*}
    F_n^!\colon\comp(\Varc_{k})^\op\to \Chb(\Repc(\Gal(k^s/k);\Zb/\ell^n\Zb))^\op.
\end{equation*}
On objects, it is given by
\begin{equation}\label{Wexactobn}
  F_n^!(\gamma_X\colon X\xrightarrow{\circ}\bar{X})\coloneqq S_{\bar X}(\gamma_{X!}A^n_X).
\end{equation}

By definition, a complement map $j$ in $\Varc_k$ consists of a commuting square
\begin{equation*}
  \xymatrix{U \ar[r]^j \ar[d]_{\gamma_U} & X \ar[d]^{\gamma_X} \\ \overline{U}
    \ar@{->}[r]^{\bar j} & \overline{X}}
\end{equation*}
where $j$ is an open embedding and $\bar j$ is a closed embedding. Given such, we
obtain a map
\begin{equation*}
  F_n^!j: S_{\bar U}(\gamma_{U!}A^n_U)\to S_{\bar X}(\gamma_{X!}A^n_X)
\end{equation*}
via the following composition:
\[\xymatrix@C=5em@R=1em{
  S_{\bar U}( \gamma_{U!}A^n_U) \ar[r]^{\bar j_\Gamma} &
  S_{\bar X}( \bar j_! \gamma_{U!} A^n_U)
   \ar[r]^-\cong & S_{\bar X}( \gamma_{X!}j_!
   A^n_U)
   \\
   \ar[r]^-\cong &
   S_{\bar X}(\gamma_{X!}j_!j^\ast A^n_X)\ar[r]^-{S_{\bar X}\gamma_{X!}(\epsilon)}
   & S_{\bar X}(\gamma_{X!}A^n_X).
}\]
The first map is the isomorphism of Lemma~\ref{lem:all-tech}. The middle two isomorphisms come from the
canonical identification $\bar j_!\gamma_{U!}=\gamma_{X!}j_!$ and $A^n_U\cong j^*A^n_X$. The last morphism $\epsilon$ is the counit of the adjunction $j_!\dashv j^\ast$ (which exists because $j$ is an open embedding).

\begin{lemma}
    The assignment $j\mapsto F_n^!j$ is functorial on $\comp(\Varc_{k})^\op$.
\end{lemma}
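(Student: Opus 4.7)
The strategy is direct verification. Because we have fixed the Godement resolution once and for all, $R\Gamma$ is a strict functor from \'etale sheaves to chain complexes of continuous $\Gal(k^s/k)$-representations, so it suffices to show that the assignment of a morphism in $\fib(\Varc_k)$ to a map of sheaves on $\bar X\times_k k^s$ is strictly functorial; applying $R\Gamma$ then yields functoriality of $F_!^n$.

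First, identities go to identities: for an identity fibration on $(X\xrightarrow{\circ}\bar X)$, the map $j$ is $\mathrm{id}_X$, the counit $\epsilon_j\colon j_!j^\ast\to\mathrm{id}$ is the identity, and each canonical identification is an equality, so $F_!^n$ sends the identity to the identity on the nose. For composition, consider composable fibrations, which in the form used here amount to open embeddings $j\colon U\into V$ and $j'\colon V\into X$ together with a shared proper compactification $\bar X$ and induced compactifications $\gamma_U,\gamma_V,\gamma_X$. The morphism $F_!^n(j'\circ j,\gamma)$ is obtained, before applying $R\Gamma$, from the composition
\begin{equation*}
\gamma_{U,!}(\Zb/\ell^n\Zb)_U \xrightarrow{\cong} \gamma_{X,!}(j'j)_!(j'j)^\ast(\Zb/\ell^n\Zb)_X \xrightarrow{\gamma_{X,!}\epsilon_{j'j}} \gamma_{X,!}(\Zb/\ell^n\Zb)_X,
\end{equation*}
while $F_!^n(j',\gamma')\circ F_!^n(j,\gamma)$ factors this through the intermediate term $\gamma_{V,!}(\Zb/\ell^n\Zb)_V$. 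The two sheaf-level maps agree by two standard facts: (a) lower shriek along open embeddings is functorial, giving the canonical identifications $\gamma_{X,!}(j'j)_!=\gamma_{V,!}j_!=\gamma_{U,!}$; and (b) the counit of the composite adjunction $(j'j)_!\dashv(j'j)^\ast$ factors as the pasted composite $\epsilon_{j'}\circ j'_!\epsilon_j(j')^\ast$ of the individual counits, a formal consequence of adjunction theory applied in the \'etale topos.

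The main, though purely formal, obstacle is notational bookkeeping: one needs to check that the diagram of sheaves on $\bar X\times_k k^s$ assembled from the canonical identifications $\gamma_{U,!}=\gamma_{X,!}j'_!j_!$, the naturality of the counits, and the sheaf identifications $(\Zb/\ell^n\Zb)_U\cong j^\ast(\Zb/\ell^n\Zb)_V$ commutes on the nose. Every cell of this diagram is either a naturality square for a counit or a functoriality statement for lower shriek, both of which are standard in \cite{SGA4.5}; no new input is required. Applying the strictly functorial $R\Gamma$ then produces the required equality of morphisms in $\Chb(\Repc(\Gal(k^s/k);\Zb/\ell^n\Zb))$.
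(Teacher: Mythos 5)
Your proof is correct and follows essentially the same route as the paper's: reduce to functoriality before applying the (strictly functorial, Godement-based) $R\Gamma$, check identities trivially, and for composition use functoriality of lower shriek along open embeddings together with the fact that the counit of the composite adjunction $(j'j)_!\dashv (j'j)^\ast$ is the pasting of the individual counits. The paper merely writes out the intermediate string of sheaf morphisms more explicitly before invoking the same two facts.
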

\begin{proof}
  The definition immediately implies that identities are mapped to identities,
  so we only need to check that composition is respected. For this, note that a
  composable pair of cofibrations in $\Varc_{k}$ consists of a commuting diagram
  \begin{equation*}
    \xymatrix{U \ar[r]^{j_1} \ar[d]_{\gamma_U} & X \ar[r]^{j_2}
      \ar[d]^{\gamma_X} & Y \ar[d]^{\gamma_Y} \\
      \overline{U} \ar[r]^{\bar j_1} & \overline{X} \ar[r]^{\bar j_2} & \overline{Y}}
  \end{equation*}
  in which the maps $j_i$ are open embeddings and the maps $\bar{j_i}$ are closed embeddings. The above diagram yields the following commutative diagram:
  \[\xymatrix@C=3em{
      S_{\bar U}(\gamma_{U!} A^n_U) \ar[d]^-{\bar j_{1\Gamma}}
      \ar[rd]^{(\bar j_2\bar j_1)_\Gamma} \\
      S_{\bar X}(\bar j_{1!}\gamma_{U!} A^n_U) \ar[r]^-{\bar j_{2\Gamma}}
      \ar[d]_\cong& S_{\bar Y} ((\bar j_2\bar j_1)_!  \gamma_{U!}  A^n_U)
      \ar[r]^-\cong \ar[d]^\cong & S_{\bar Y}((\bar j_2\bar j_1)_!  \gamma_{U!}
      (j_2j_1)^* A^n_Y) \ar[d]^\cong \\
      S_{\bar X}( \gamma_{X!} j_{1!} j_1^* A^n_X) \ar[d]_{S_{\bar X}
        \gamma_{X!}  \epsilon_1} \ar[r]^-{\bar j_{2\Gamma}} & S_{\bar Y}(\bar
      j_{2!}  \gamma_{X!}j_{1!}j_1^* A^n_X) \ar[d]^{S_{\bar Y}\bar j_{2!}
        \gamma_{X!} \epsilon_1} & S_{\bar Y}( \gamma_{Y!}(j_2j_1)_!(j_2j_1)^*
      A^n_Y)
      \ar[dd]^{S_{\bar Y} \gamma_{Y!} (\epsilon_{21})}\\
      S_{\bar X}( \gamma_{X!} A^n_X) \ar[r]^-{\bar j_{2\Gamma}}
      & S_{\bar Y}(\bar j_{2!} \gamma_{X!} A^n_X) \ar[d]^\cong \\
      & S_{\bar Y}( \gamma_{Y!} j_{2!} j_2^* A^n_Y) \ar[r]^{S_{\bar Y}
        \gamma_{Y!} \epsilon_2} & S_{\bar Y}( \gamma_{Y!} A_Y^n) }\]
    The left-hand
    side of the diagram commutes by the naturality properties of
  $\bar j_{2\Gamma}$ (by
  Lemma~\ref{lem:all-tech}); the right-hand
  side of the diagram commutes because it commutes in $\Sh(\bar Y\times_k k^s)$
  before applying $S_{\bar Y}$.  The composition around the bottom is the map
  $F_n^!(j_2)\circ F_n^!(j_1)$, and the composition around the top is the map
  $F_n^!(j_2\circ j_1)$.  Since the diagram commutes, $F_n^!$ is functorial.
\end{proof}

We now define the functor
\begin{equation*}
    F_!^n\colon\co(\Varc_{k})\to \Chb(\Repc(\Gal(k^s/k);\Zb/\ell^n\Zb))^\op.
\end{equation*}
On objects, it is equal to $F_n^!$ (see \eqref{Wexactobn}). By definition, a
cofibration $i$ in $\Varc_{k}$ consists of a commuting square
\begin{equation*}
  \xymatrix{ Z \ar[r]^i \ar[d]_{\gamma_Z} & X \ar[d]^{\gamma_X} \\
    \bar Z \ar[r]^{\bar i} & \bar X}
\end{equation*}
where the horizontal maps are closed embeddings. Given such, we obtain a map
\begin{equation*}
  F_!^n i\colon S_{\bar X}(\gamma_{X!}A^n_X)\to S_{\bar Z}( \gamma_{Z!}A^n_Z)
\end{equation*}
using the composition of morphisms
\[\xymatrix@C=4em@R=1ex{
    S_{\bar X}(\gamma_{X!}A^n_X) \ar[r]^-{S_{\bar X}\gamma_{X!}(\eta)}
    &S_{\bar X}( \gamma_{X!}i_\ast i^\ast A^n_X) \ar[r]^\cong &
    S_{\bar X}( \gamma_{X!}i_!A^n_Z)  \\
    \qquad \ar[r]^\cong & S_{\bar X} (\bar{i}_! \gamma_{Z!}A^n_Z)
    \ar[r]^{\bar i_\Gamma^{-1}}& S_{\bar Z}(\gamma_{Z!} A^n_Z).  }\] Here,
$\eta$ is the unit of the adjunction $i^\ast\dashv i_\ast$, and the last two
isomorphisms come from the canonical identifications $i_!=i_\ast$ (because $i$
is proper), $\gamma_{X!}i_!=\bar{i}_!\gamma_{Z!}$ (by functoriality of $(-)_!$)
and $A^n_Z\cong i^\ast A^n_X$.  The last map is the inverse of the isomorphism of
Lemma~\ref{lem:all-tech}.

\begin{lemma} \label{lem:F^!}
    The functor $F_!^n$ is well-defined.
\end{lemma}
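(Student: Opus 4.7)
The plan is to check that $F^!_n$ preserves identities and compositions, paralleling the proof for $F_!^n$ but using the units of adjunctions $i^\ast\dashv i_\ast$ in place of the counits of $j_!\dashv j^\ast$. As before, it suffices to verify functoriality at the level of sheaves on $\bar X\times_k k^s$ before applying $R\Gamma$, since the fixed Godement resolution makes $R\Gamma$ strictly functorial. Identities pose no difficulty: when both $i$ and $\bar i$ are identities, the unit $\eta$ of $\mathrm{id}^\ast\dashv\mathrm{id}_\ast$ is the identity, and all canonical isomorphisms collapse.

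For composition, I consider a composable pair of cofibrations $(i_1,\gamma)$ and $(i_2,\gamma)$ in $\Varc_k$, giving a commuting diagram
\[
\xymatrix{
Z \ar@{^{(}->}[r]^{i_1} \ar[d]_{\gamma_Z} & Y \ar@{^{(}->}[r]^{i_2}\ar[d]_{\gamma_Y} & X \ar[d]^{\gamma_X} \\
\bar Z \ar@{^{(}->}[r]_{\bar{i}_1} & \bar Y \ar@{^{(}->}[r]_{\bar{i}_2} & \bar X
}
\]
with horizontal maps closed embeddings, vertical maps open embeddings, and $\bar Z, \bar Y, \bar X$ proper. Expanding the contravariant composite $F^!_n(i_1,\gamma)\circ F^!_n(i_2,\gamma)$ yields a string in $\Sh(\bar X\times_k k^s)$ built from the units $\eta_1,\eta_2$ of $i_1^\ast\dashv i_{1,\ast}$ and $i_2^\ast\dashv i_{2,\ast}$, the identifications $i_{j,!}=i_{j,\ast}$ (since each $i_j$ is proper), the functoriality isomorphisms $\gamma_{Y,!}i_{1,!}\cong \bar{i}_{1,!}\gamma_{Z,!}$ and $\gamma_{X,!}i_{2,!}\cong \bar{i}_{2,!}\gamma_{Y,!}$, and the canonical identifications of constant sheaves under pullback. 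By naturality, and using that the unit of the composite adjunction $(i_2i_1)^\ast\dashv (i_2i_1)_\ast$ factors, under $(i_2i_1)_\ast\cong i_{2,\ast}i_{1,\ast}$, as the composite $i_{2,\ast}(\eta_1)\circ \eta_2$, this string rearranges into $\bar{i}_{2,!}\bar{i}_{1,!}\gamma_{Z,!}\cong(\bar{i}_2\bar{i}_1)_!\gamma_{Z,!}$ applied to the defining sheaf morphism of $F^!_n(i_2i_1,\gamma)$.

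The main obstacle is careful bookkeeping of canonical isomorphisms, in particular ensuring the proper-map identification $i_!\cong i_\ast$ is coherently compatible with composition, so that $(i_2i_1)_\ast\cong i_{2,\ast}i_{1,\ast}$ matches $(i_2i_1)_!\cong i_{2,!}i_{1,!}$. One must also verify that the implicit identification $R\Gamma_{\bar Z}\cong R\Gamma_{\bar X}\circ \bar{i}_{2,\ast}\bar{i}_{1,\ast}$ used when passing to $R\Gamma$ is consistent; this follows from $\bar{i}_{j,\ast}\cong R\bar{i}_{j,\ast}$, since the Godement resolution computes the derived pushforward along closed embeddings. With these coherence checks in hand, the functoriality identity $F^!_n(i_1,\gamma)\circ F^!_n(i_2,\gamma)=F^!_n(i_2i_1,\gamma)$ follows.
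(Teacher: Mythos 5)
Your proof is correct and follows essentially the same route as the paper's: reduce to the level of sheaves on $\bar X\times_k k^s$ (using the strict functoriality of the fixed Godement resolution), dispose of identities immediately, and verify composition by showing that the composite of the two whiskered unit maps coincides, under the canonical identifications $i_!\cong i_\ast$ and $\gamma_{!}i_!\cong\bar i_!\gamma_{!}$, with the unit of the composite adjunction $(i_2i_1)^\ast\dashv (i_2i_1)_\ast$. The coherence points you flag (compatibility of $i_!\cong i_\ast$ with composition, and $R\Gamma_{\bar Z}\cong R\Gamma_{\bar X}\circ\bar i_\ast$ via exactness of pushforward along closed embeddings) are exactly the ones implicit in the paper's argument.
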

\begin{proof}
  The definition immediately implies that identities are mapped to identities,
  so we only need to check that composition is respected. A composable pair of
  complement maps in $\Varc_{k}$ consists of a commuting diagram
  \[\xymatrix{
      W \ar[r]^{i_2} \ar[d]_{\gamma_W} & Z \ar[r]^{i_1} \ar[d]_{\gamma_Z} & X
      \ar[d]^{\gamma_X} \\
      \bar W \ar[r]^{\bar i_2} & \bar Z \ar[r]^{\bar i_1} & \bar X
    }\]
    in which the horizontal maps are closed embeddings. Given this diagram, we have the following diagram:
  \[\xymatrix@C=5em{
      S_{\bar X}(\gamma_{X!}A^n_X) \ar[d]_{S_{\bar X}\gamma_{X!}\eta_1}
      \ar[rd]^{S_{\bar X}\gamma_{X!} \eta_{12}} \\
      S_{\bar X}(\gamma_{X!}i_{1*}i_1^* A^n_X) \ar[r]^-{S_{\bar
          X}\gamma_{X!}i_{1*}\eta_2} \ar[d]_\cong & S_{\bar
        X}(\gamma_{X!}(i_1i_2)_* (i_1i_2)^*A^n_X )\ar[r]^-\cong \ar[d]^\cong
      & S_{\bar
        X}(\gamma_{X!}(i_1i_2)_!A^n_W ) \ar[dd]^\cong \\
      S_{\bar X}(\bar i_{1!}\gamma_{Z!}A^n_Z) \ar[d]_{\bar
        i_{1\Gamma}} \ar[r]^-{S_{\bar X}\bar i_{1!}\gamma_{Z!}\eta_2}&
      S_{\bar X}(\bar i_{1!}\gamma_{Z!}i_{2*}i_2^*A^n_Z) \ar[d]^\cong
      &  \\
      S_{\bar Z}( \gamma_{Z!} A^n_Z) \ar[d]_{S_{\bar Z}\gamma_{Z!}\eta_2} &
      S_{\bar X}(\bar i_{1!}\gamma_{Z!}i_{2!}A^n_W) \ar[d]_{\bar
        i_{1\Gamma}^{-1}} \ar[r]^\cong
       &  S_{\bar X}((\bar{i}_1\bar{i}_2)_! \gamma_{W!} A^n_W)
      \ar[d]^{(\bar{i}_1\bar{i}_2)_\Gamma^{-1}} \\
      S_{\bar Z}(\gamma_{Z!} i_{2*}i_2^* A^n_Z) \ar[r]^\cong & S_{\bar Z} (\bar i_{2!} \gamma_{W!} A_W^n ) \ar[r]^{\bar
        i_{2\Gamma}^{-1}}
      & S_{\bar W}(\gamma_{W!}A^n_W)
    }\]
  Here, $\eta_a$ is the unit of the adjunction $i_a^* \dashv (i_a)_*$ for $a =
  1,2$ and $\eta_{12}$ is the unit for the adjuction $(i_1i_2)^* \dashv
  (i_1i_2)_*$.  The composition around the top is $F^n_!(i_1i_2)$; the
  composition around the bottom is $F^n_!(i_2)\circ F^n_!(i_1)$.   The diagram
  commutes by the naturality of $\eta_1$,$\eta_2$,$\eta_{12}$ and by
  Lemma~\ref{lem:all-tech}.  Thus $F^n_!$
  is a functor.
\end{proof}

It now remains to construct $F_n^w$.  We define it using the same formula as for
$F_n^!$.  Note that as
the proof of Lemma~\ref{lem:F^!} only used the results of
Lemma~\ref{lem:all-tech}, the proof works analogously to show that $F_n^w$ is
well-defined.

\begin{lemma}
  The collection of functors $\{(F_n^!,F^n_!,F_n^w)\}$ defines a
  $W$-exact functor
    \begin{equation*}
        F\colon \Varc_{k}\to\Chb(\RepcAR(\Gal(k^s/k);\ell))^\op.
    \end{equation*}
\end{lemma}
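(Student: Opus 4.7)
The plan is to verify the remaining axioms of a $W$-exact functor for the pair $(F_!, F^!)$ assembled from the individual $(F_!^n, F^{!,n})$: compatibility with the A--R-$\ell$-adic structure across levels $n$, the base change identity (axiom 4 of Definition~\ref{pseudo_exact}), and the cofiber sequence axiom (axiom 5). Axioms 1--3 follow from the two preceding lemmas together with the fact that the constructions of $F_!^n$ and $F^{!,n}$ agree on objects.

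For the A--R-$\ell$-adic lift, observe that the constant system $\{(\Zb/\ell^n\Zb)\}_n$ on any $k^s$-scheme is tautologically $\ell$-adic, and the operations $j^\ast$, $j_!$, $i^\ast$, $i_\ast$, $\gamma_!$, and the Godement resolution all commute with the transition maps $\Zb/\ell^{n+1}\Zb \to \Zb/\ell^n\Zb$ in a strictly functorial way. The Finiteness Theorem for $\ell$-adic sheaves then guarantees that each $F_!^n(j,\gamma)$ and $F^{!,n}(i,\gamma)$ assembles across $n$ into a map in $\Chb(\Repc^{AR}(\Gal(k^s/k);\ell))$, so it suffices to verify the remaining axioms at each level $n$.

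For base change, consider a cartesian square in $\Varc_k$ as in axiom~4; by construction it is pairwise cartesian in both the variety component and the compactification component. The two composites $F_!(i)\circ F^!(j)$ and $F^!(j')\circ F_!(i')$ are each obtained by applying $R\Gamma$ to a composition of unit and counit natural transformations for the adjunctions $j_!\dashv j^\ast$ and $i^\ast \dashv i_\ast$ (and their primed analogues), together with the canonical identifications of $!$-pushforwards along the compactification open embeddings. Their equality then reduces to a diagram chase showing that these units and counits are compatible with pullback along the other map in the square, which is a standard consequence of the base change theorems and adjunction compatibilities for \'etale sheaves.

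For the cofiber sequence axiom, given a subtraction sequence $(X,\bar X) \hookrightarrow (Y,\bar Y) \xleftarrow{\circ} (Y-X, \bar Y)$ in $\Varc_k$, the standard short exact sequence of constructible sheaves on $Y\times_k k^s$
\[
0 \to j_!(\Zb/\ell^n\Zb)_{(Y-X)|_{k^s}} \to (\Zb/\ell^n\Zb)_{Y|_{k^s}} \to i_\ast(\Zb/\ell^n\Zb)_{X|_{k^s}} \to 0,
\]
arising from the counit $j_!j^\ast \to \mathrm{id}$ and the unit $\mathrm{id} \to i_\ast i^\ast$, is preserved by the exact extension-by-zero functor $\gamma_{Y,!}$. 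Using the canonical identifications $\gamma_{Y,!}j_! \cong \gamma_{Y-X,!}$ and $\gamma_{Y,!}i_\ast \cong \bar i_\ast \gamma_{X,!}$ (the latter because $i$ and $\bar i$ are proper), we obtain a short exact sequence of sheaves on $\bar Y\times_k k^s$. Since the Godement resolution is exact on sheaves and global sections is exact on flasques, this yields a short exact sequence of complexes in $\Chb(\Repc(\Gal(k^s/k);\Zb/\ell^n\Zb))$ which, by construction, realizes the $F$-image of the subtraction sequence; short exact sequences are cofiber sequences in $\Chb$. The main obstacle will be the diagram chase for base change, since it requires organizing several adjunction units and counits around a pullback square of varieties \emph{together with} their compactifications; the other verifications are routine once this is in place.
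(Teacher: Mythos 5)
Your outline matches the paper's proof step for step: the same use of the Finiteness Theorem \cite[Theorem I.12.15]{FK} to see that the level-$n$ functors assemble into $\Chb(\Repc^{AR}(\Gal(k^s/k);\ell))$, the same short exact sequence $0 \to \gamma_{U,!}j_!j^\ast \to \gamma_{X,!} \to \bar i_\ast\gamma_{Z,!}i^\ast \to 0$ (checked on stalks) for the cofiber-sequence axiom, and the same reduction of base change to a commutative square of units and counits at each level $n$.

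The one place where your write-up falls short of a proof is exactly the place you flag as the main obstacle: the base change diagram chase is not ``a standard consequence of the base change theorems.'' No proper or smooth base change theorem is invoked in the paper; the verification is an elementary but genuinely necessary computation. After stripping off $\gamma_{X,!}$, one must show that the square
\begin{equation*}
\xymatrix{
j_{U,!}j_U^\ast(\Zb/\ell^n\Zb)_X \ar[rr] \ar[d]_{\epsilon_U} && j_{U,!}i_{W,\ast}i_W^\ast j_U^\ast(\Zb/\ell^n\Zb)_X \ar[d] \\
(\Zb/\ell^n\Zb)_X \ar[rr]_{\eta_Z} && i_{Z,\ast}i_Z^\ast(\Zb/\ell^n\Zb)_X
}
\end{equation*}
commutes, and the paper does this by evaluating both composites on sections over an arbitrary \'etale $V\to X$. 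The outcome is that the two composites agree \emph{if and only if} every $V\to X$ factoring through $U$ has $Z\times_X V\to Z$ factoring through $W$, i.e.\ if and only if $W = Z\times_X U$. So the cartesian hypothesis is not merely ambient bookkeeping that makes a standard compatibility apply; it is precisely the condition under which the square commutes, and the argument has to surface it explicitly. Your proposal correctly identifies where the hypothesis must enter but does not carry out the computation that shows it suffices; supplying that section-level check would complete the proof along the paper's lines.
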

\begin{proof}
  The maps $F_{n+1}^!\to F_{n+1}^!\otimes_{\Z}\Z/\ell^n\Z\to F_n^!$ and
  similarly for $F^n_!$ and $F_n^w$ endow the collections $\{F_n^!\}$,
  $\{F^n_!\}$ and $\{F_n^w\}$ with the structure of projective systems. From the
  construction, for each fixed $X$, $\{F_n^!(X)\}$, $\{F^n_!(X)\}$ and
  $\{F_n^w\}$ are obtained by applying $S_{\bar X}\gamma_{X!}$ to
  A--R-$\ell$-adic systems of sheaves on $X\times_k k^s$. Therefore, by
  \cite[Theorem I.12.15]{FK}, $\{F_n^!(X)\}$, $\{F^n_!(X)\}$ and $\{F_n^w(X)\}$
  are A--R-$\ell$-adic complexes of sheaves on $k^s$, i.e. A--R-$\ell$-adic
  complexes of continuous $\Gal(k^s/k)$-modules. So, we indeed have functors
  \begin{align*}
    F^!&\colon \comp(\Varc_{k})^\op\to (\Chb(\RepcAR(\Gal(k^s/k);\ell))^\op)^\op\\
    F_!&\colon\co(\Varc_{k})\to \Chb(\RepcAR(\Gal(k^s/k);\ell))^\op \\
    F^w&\colon \w(\Varc_k) \to  \Chb(\RepcAR(\Gal(k^s/k);\ell))^\op.
  \end{align*}
  It remains to verify that $F$ is $W$-exact.  Axioms (1)-(4) hold by
  definition, so we check the remainder in turn.

  First, consider Axiom (5).  It suffices to prove it for $F_n$.  This is a
  large but straightforward diagram chase using the definitions of $F_n^!$ and
  $F^n_!$.  For those who would like to see the details, we present them in
  Appendix~\ref{app:large-diag}.

  Now we check Axiom (6): that $F$ takes subtraction sequences to exact
  sequences.  Again, it suffices to show it for $F_n$.  Given a subtraction
  sequence in $\Varc_{k}$
  \[\xymatrix{
    (Z,\bar Z) \rcofib[r]^j & (X, \bar X) & (U, \bar U) \ar[l]^{\circ}_i}\]
  we obtain a sequence in $\Sh(X\times_k k^s)$
  \begin{equation*}
    0\to j_!A^n_U \to A^n_X \to i^*A^n_Z\to 0
  \end{equation*}
  and we see that this is exact by direct inspection (i.e. by verifying
  exactness on stalks). Applying $S_{\bar X}\gamma_{X!}$, we obtain an
  exact sequence in $\Chb(\RepcAR(\Gal(k^s/k);\ell))$ (using that the Godement resolution is an exact functorial flabby resolution, see e.g. \cite[p. 129]{FK}). Consider the following
  diagram:
  \[\xymatrix{
    S_{\bar U}(\gamma_{U!}A^n_U) \ar[d]_{\bar j_\Gamma}
    \ar[rd]^{F_n^!(j)} \\
    S_{\bar X}(\gamma_{X!}j_!A^n_U) \ar[r] & S_{\bar X}(\gamma_{X!}
    A^n_X) \ar[r] \ar[rd]_{F^n_!(i)} & S_{\bar X}(\gamma_{X!}
    i^*A^n_Z) \ar[d]^{\bar i_\Gamma^{-1}} \\
    & & S_{\bar Z}(\gamma_{Z!}A^n_Z)}\]
  The exact sequence across the middle is isomorphic to the diagonal
  sequence, which is thus also exact.  Therefore $F_n$ takes subtraction
  sequences to exact sequences, as desired.

  It remains to verify Axiom (7); as before, it suffices to prove that it
  holds for $F_n$.  The proof of (7) for complement maps is identical to the
  proof of Lemma~\ref{lem:F^!}, since it is simply checking that the
  transformation defined in Lemma~\ref{lem:all-tech} respects composition.
  The proof of (7) for cofibrations is identical to the proof of (5), since
  any such commutative diagram is automatically a pullback square, and $F^w_n$
  is defined identically to $F_!^n$.
\end{proof}

\section{Nontrivial Classes in the Higher $K$-Theory of
  Varieties}
\label{sec:nontriv}

Let $\FinSet$ be the category of finite sets.  Consider the map
$\Sb \rto K(\Var_k)$ induced by the exact functor $\FinSet \rto \Var_k$ given by
$A \mapsto \coprod_A \Spec(k)$.  This induces a homomorphism
\[\Sb \to K_*(\Var_k).\] The stable homotopy groups of spheres have a rich
higher structure, but it is not clear that this structure is not annihilated
when passing to varieties.  In this section we explore this map in the cases
where $k$ is a subfield of $\Cb$ and when $k$ is a finite field.  In the case
when $k$ is a subfield of $\Cb$ we show that the image of this map is nontrivial
even above the $0$-th homotopy group; in the case when $k$ is finite we show
that $K_*(\Sb)$ is a direct summand of $K_*(\Var_k)$ and prove that this map is
not surjective when $* = 1$. Moreover, we are able to extend this result to
local and global fields which contain a place of cardinality congruent to $3$
mod $4$, as well as to subfields of $\Rb$.

Our analysis considers a particularly simple family of maps involving
permutations of varieties.  In order to construct these maps, it is useful to
have a model of the sphere spectrum $\mathbb{S}$ within SW-categories. The
following lemmas furnish this.

\begin{lemma}
  The category of finite sets $\FinSet$ is an SW-category where cofibrations are
  monomorphisms and subtraction sequences are diagrams
  $[m] \hookrightarrow [n] \hookleftarrow [n-m]$ where the images of the two
  maps are disjoint.
\end{lemma}

The category $\FinSet_+$ of finite pointed sets can be considered to be a
Waldhausen category by defining the cofibrations to be injections and the weak
equivalences to be isomorphisms.  Write $S_+$ for a set $S$ with a disjoint
basepoint added. We can define a $W$-exact functor
\[(I^!,I_!,I^w): \FinSet \rto \FinSet_+\] by defining $I(S) = S_+$ for any
finite set $S$, $I_!(i) = i_+$ for any cofibration $i$ and $I^w(i) = i_+$ for
any isomorphism $i$.  To define $I^!$ we consider an injection $j:S \rto T$,
considered as a complement map.  We can define an ``inverse'' map $T_+ \rto S_+$
by taking $t$ to $j^{-1}(t)$ when $t$ is in the image of $j$, and to the
basepoint otherwise.

\begin{prop} \label{prop:finset}
  The $W$-exact functor $(I^!,I_!,I^w):\FinSet \rto \FinSet_+$ induces an
  isomorphism
  \[K^{SW}(\FinSet) \rto K^W(\FinSet_+);\]
  here, $K^{SW}$ takes the $K$-theory of $\FinSet$ as an $SW$-category, and
  $K^W$ takes the $K$-theory of $\FinSet_+$ as a Waldhausen category.
\end{prop}
\begin{proof}
  It suffices to check that $I$ induces an isomorphism of $n$-simplicial
  categories
  $\widetilde{S}^{(n)}_\bullet \FinSet \cong S_\bullet^{(n)} \FinSet_+$.  To
  check this, it suffices to check that $I$ induces an isomorphism of categories
  at each simplicial level; this follows because $I$ bijectively renames the
  terms in each object of $\widetilde{S}^{(n)}_{i_1\cdots i_n}\FinSet$.
\end{proof}

We now define the family of maps we use.
\begin{defn}
  For each $X \in \mc{V}_k$, define
  \[
    \sigma_X \colon \FinSet \to \Var_k
  \]
  to be the exact functor of $SW$-categories $\FinSet \to \mc{V}_k$ induced by
  \[
    [n] \longmapsto \underbrace{X \amalg \cdots \amalg X}_{n \ \text{times}}.
  \]
  Thus we get a family of maps
  \[\pi_*\sigma_X: \pi_*(\Sb) \rto K_*(\Var_k).\]
\end{defn}

\subsection{Subfields of $\Cb$}

We appeal to some facts from Adams \cite{adams_JX_4} and Quillen's letter to Milnor \cite{quillen_letter}. Recall that there is a homomorphism from the stable homotopy of the stable orthogonal group to the stable homotopy groups of spheres: $J\colon \pi^s_\ast \mathbf{O} \to \pi_\ast \mathbb{S}$. This is constructed as follows. Since every element of $O(n)$ defines a map $\mathbf{R}^n \to \mathbf{R}^n$, by one point compactification, it defines a point map $S^n \to S^n$. Thus, there is a map of spaces $O(n) \to \operatorname{Map}_\ast (S^n, S^n) \cong \Omega^n S^n$. Taking homotopy groups and stabilizing yields the $J$-homomorphism. The following is due to Adams

\begin{thm}\cite[Thm. 1.5]{adams_JX_4} \label{thm:adams}
The map $J: \pi_{4s-1} \mathbf{O} \to \pi_{4s-1} \mathbb{S}$ exhibits $\pi_{4s-1}\mathbf{O}$ as a direct summand of $\pi_{4s-1} \mathbb{S}$ and the image is
\[
J(\pi_{4s-1}(\mathbf{O})) \cong C_{d_s} \qquad d_s = \text{denominator} \left(\frac{B_s}{4s}\right)
\]
where $B_s$ is the $s$th Bernoulli number, and $C_{d_s}$ denotes the cyclic
group of order $d_s$.
\end{thm}

There is also a map $p_i\colon\pi_i \mathbb{S} \to K_i (\Z)$ induced by the
inclusion $B \Sigma_\infty \to BGL(\Z)$. For our purposes, it is more
useful to regard it as the map induced by the exact functor of Waldhausen
categories
\[
  P\colon \FinSet_+ \to \Modf(\Z) \qquad [n]_+
  \mapsto \Z \oplus \cdots \oplus \Z
\]
Quillen's letter to Milnor gives the following.
\begin{thm}\cite{quillen_letter} \label{thm:quillenletter}
The composite map
\[
\pi_{4s-1}(\mathbf{O}) \to^J \pi_{4s-1}( \mathbb{S})   \to^{p_{4s-1}} K_{4s-1} (\Z)
\]
is injective.
\end{thm}

We can use these results to identify nontrivial elements in $K_{2s-1}(\Var_k)$.

\begin{thm}
  Let $k$ be a subfield of $\mathbf{C}$. There are infinitely many non-trivial
  homotopy groups of $K_\ast (\mc{V}_k)$. In particular, for all $s > 0$,
  $K_{4s-1}(\mc{V}_k)$ is non-trivial.
\end{thm}
\begin{proof}
  Fix $s > 0$, and let $X \in \mc{V}_k$ be a projective variety such that its
  compactly-generated Euler characteristic $\chi_c(X)$ is relatively prime to
  $d_s$.  (Note that this is always possible, as for example
  $\chi_c (\mathbf{C} P^n) = n$ for all $n$.)  Let $C^*_c(X)$ be the
  compactly-supported singular chains on the complex points of $X$.
  Consider the diagram
  \[\xymatrix@C=4em{
      \Var_k & \Varc_k \ar[l]_U \ar@{-->}[r]  & \Chb(\Z) & \ar[l]_{\cdot[0]}\Modf(\Z) \\
      & \FinSet \ar[lu]^{\sigma_X} \ar[u]|{\sigma_{(X,X)}} \ar@{-->}[r]^I &
      \FinSet_+\ar[r]^P & \Modf(\Z) \ar[lu]|{\otimes C^*_c(X)}} \] where the
  left half of the diagram consists of $SW$-categories and the right half
  consists of Waldhausen categories; both dashed arrows are $W$-exact functors.
  This diagram commutes, in the sense that the left-hand triangle commutes as a
  triangle of exact functors, and the middle pentagon commutes as a diagram of
  $W$-exact functors (as $W$-exact functors can be pre/postcomposed with exact
  functors).  The bottom dashed arrow is the $W$-exact functor $I$ constructed
  above Proposition~\ref{prop:finset}.  Applying $\pi_0$ and noting that the two
  solid arrows across the top become isomorphisms we recover the motivic measure
  constructed in Corollary~\ref{cor:Reuler}.  Upon applying $K$-theory we get a
  diagram
  \[\xymatrix@C=4em{
      K_{4s-1}(\Var_k) & K_{4s-1}(\Varc_k) \ar[l]_{\cong} \ar[r] &
      K_{4s-1}(\Chb(\Z))
      \ar[r]^-{\chi} &  K_{4s-1}(\Z) \\
      \pi_{4s-1}(\mathbf{O}) \ar[r]^J & \pi_{4s-1}\Sb \ar[lu]|{\sigma_X}
      \ar[r]^\cong \ar[u]|{\sigma_{(X,X)}} & \pi_{4s-1}\Sb \ar[r]^{p_{4s-1}} &
      K_{4s-1}(\Z) \ar[lu] \ar[u]_{\cdot \chi_c(X)}} \] where the isomorphism
  induced by $I$ is the identity.  By
  Theorems~\ref{thm:adams} and \ref{thm:quillenletter}, the composition across
  the bottom $\pi_{4s-1}(\mathbf{O}) \rto K_{4s-1}(\Z)$ is injective; since
  $\chi_c(X)$ is relatively prime to $d_s$, the composition
  $\pi_{4s-1}(\mathbf{O}) \rto K_{4s-1}(\Z)$ across the bottom and to the
  upper-right is also injective.  Since this factors through $K_{4s-1}(\Var_k)$
  the theorem follows.
\end{proof}

\begin{remark}
  In the above proof we are not really using anything interesting about $X$; in
  particular, we could take $X$ to be $\Spec k$ and the proof still goes
  through.  This makes sense, as the image of $J$ sees nontrivial elements of
  $K$-theory which can be described in terms of permutations, which are
  automorphisms of $0$-dimensional varieties.

  However, we believe that the more complex proof is useful due to the
  possibility of generalization.  It should be possible to use the same
  construction in the proof to show that there are more interesting nontrivial
  elements in higher homotopy groups of $K(\Var_k)$ by exploiting the structure
  of the cohomology of $C_c^*(X)$.  For example, if the measure can be enriched
  to land in the $K$-theory of mixed Hodge structures then by selecting an $X$
  with a nontrivial mixed Hodge structure the above proof would detect an
  element which is \emph{not} in the image of the map $\Sb \rto K_*(\Var)$
  induced by the inclusion of $0$-dimensional varieties.
\end{remark}

\subsection{Finite Fields}
When $k$ is finite the functor
\[
\begin{array}{rclcrcl}
  \Var_k &\to &\FinSet\\
  X & \longmapsto& X(k)
\end{array}
\]
gives a splitting of the $K$-theory spectrum $K(\Var_k)$ as
$K(\Var_k)\simeq \Sb\vee \tilde{K}(\Var_k)$ (and thus of each homotopy group).  A
priori it may be the case that all higher homotopy groups of $K(\Var_k)$ are in
the image of the homotopy groups of $\Sb$.

We show that this is not the case by using $\Zeta$ to identify a nonzero element
in $\tilde K_1(\Var_k)$ for $k=\F_q$ (with $q\equiv 3$ mod 4).  For such $k$, we
construct a surjective homomorphism $h_2: K_1(\Var_{k}) \rto \Z/2$ such that the
composition $\pi_1(\Sb) \rto K_1(\Var_{k}) \rto^{h_2} \Z/2$ is trivial.  The map
$h_2$ is defined as follows.  Let $\star$ be the operation defined in \cite[\S
8]{milnor}.  Grayson \cite{Gr79} has shown that given a pair of commuting
automorphisms $f,g$ on a $\Qb_\ell$-vector space $V$, the map
\begin{equation*}
  (f,g) \mapsto f^{-1} \star g,
\end{equation*}
induces a homomorphism $s_\ell:K_1(\Aut(\Qb_\ell))\to
K_2(\Qb_\ell)$. Moore's Theorem (see \cite[Appendix]{milnor}, or \cite[Theorem
57]{WeibelLocal} and its proof) shows that the mod-$\ell$ Hilbert symbol gives a
split surjection $(-,-)_\ell\colon K_2(\Qb_\ell)\onto \mu(\Qb_\ell)$ onto the
roots of unity in $\Qb_\ell$ with kernel an uncountable uniquely divisible
abelian group $U_2(\Qb_\ell)$. For $q$ odd, we then define $h_2$ to be the
composition
\begin{equation*}
    K_1(\Var_{\F_q}) \rto^{\pi_1(\Frob_*\circ\Zeta)} K_1(\Aut(\Qb_2))
\rto^{s_2}
K_2(\Qb_2) \rto^{(-,-)_2} \Z/2,
\end{equation*}
where $\Frob_*$ is the map defined in Corollary ~\ref{cor:gZeta} for $g=\Frob$
and $(-,-)_2$ is the $2$-adic Hilbert symbol.

Fix a variety $X$, and consider the following diagram:
\begin{equation} \label{eq:AutZ2}
  \xymatrix@C=3em{
    & K_1(\Varc_k) \ar[r]^-{h_2} \ar[d]_{K_1(U)}&  \Z/2\\
    \pi_1\Sb \ar[r]^{\pi_1\sigma_X} \ar@{.>}@/^{1.5em}/[ru]^-{\pi_1\sigma_{(X,X)}} & K_1(\Var_k)
  }
\end{equation}
If $X$ is proper then $\sigma_{(X,X)}$ exists, and the diagram commutes with the
dotted arrow added.

\begin{theorem} \label{thm:non-EM-finite} When $k=\F_q$ with $q \equiv 3 \pmod 4$
  the map $h_2$ defined above detects a nontrivial class in
  $\tilde{K}_1(\Var_k)$.
\end{theorem}

\begin{proof}
  Let $\eta\in \pi_1(\mathbb{S})$ be the nonzero element.  We show that $h_2$ is
  nonzero but contains $(\pi_*\sigma_{\Spec k})(\eta)$ in its kernel.  This
  shows that $\tilde{K}_1(\Var_{k}) \neq 0$.

  The difficult part of computing $h_2$ is computing the image of $s_2$ in
  $K_2(\Q_2)$.  This is done using the $\star$ map, discussed in detail in
  \cite[Chapter 8]{milnor}.  As we are using many of the results in the section
  we will not cite them individually; we simply note that the important
  components are the definition of the general $\star$ on page 57, the proof
  that $\star$ is bilinear in Theorem 8.8 and Lemma 8.3.

  Let $\tau: \{1,2\} \rto \{1,2\}$ be the transposition of two points.

  The cohomology of a point is concentrated in degree $0$, with Frobenius acting
  trivially; thus $\tau^* \star Frob_q$ is trivial, and $h_2(\pi_1\sigma_{\Spec
    k}(t)) = 1$ in $\Z/2$.

  Now let $X=\Pb^1$, and consider the automorphism $\alpha:x \mapsto 1/x$ acting
  on $\Pb^1$.  For $k=\F_q$, we can write
  $\alpha \circlearrowleft (H^\ast_{et,c}(\Pb^1|_{\Fqbar}; \Qb_2),
  \Frob^{\Pb^1}_q)$ as a direct sum (in $\Aut(\Qb_2)$)
  \begin{equation*}
    (1 \circlearrowleft \Qb_2(0)) \oplus (-1 \circlearrowleft
    \Qb_2(-1)).
  \end{equation*}
  $Frob_q$ acts on the first summand by $1$, and on the second by $q$; $\alpha$
  acts on the first summand by $1$ and on the second by $-1$.  Since both
  summands are in even degrees we can disregard degree in our computations; thus
  we have a module $\Q_2 \oplus \Q_2$ with the automorphism $(1\oplus q)$ as our
  object of $\Aut(\Q_2)$; we have the extra automorphism $(1\oplus -1)$ acting
  on this.  Since $\star$ is bilinear, to compute the image of this under $s_2$
  it suffices to compute $q \star -1$.  By definition
  \[q \star -1 =
    \left(\begin{array}{ccc}
            q\\ & q^{-1} \\ & & 1
          \end{array}
        \right)
        \star
        \left(\begin{array}{ccc}
                -1 \\ & 1 \\ & & -1
              \end{array}
            \right)
            = \{q,-1\}\{q^{-1},1\}\{1,-1\} = \{q,-1\}\in K_2(\Q_2).\] The
          image of this under $h_2$ is $(q,-1)_2$, which, when $q\equiv 3$ mod $4$, is
          $-1$.  This gives the desired element in $\tilde{K}_1$.
        \end{proof}

  The last computation in the proof above shows that in general, for $f$ and
  $g$ automorphisms of a one-dimensional module, $f\star g = \{f,g\}$.

\subsection{Elements in $K_1$ for other fields}

Suppose that $k$ is a global or local fields with a place of cardinality
equivalent to $3$ mod $4$, or $k\subset \Rb$.  The results from the previous
section hold with almost identical proofs, with slightly different definitions.

For $k$ a global or local field with a place of cardinality equivalent to $3$
mod $4$, we pick a Frobenius element $\phi$ for this place, and define $h_2$ to
be the composition
\begin{equation*}
    K_1(\Var_{k}) \rto^{\pi_1(\phi_*\circ\Zeta)} K_1(\Aut(\Qb_2))
\rto^{\sigma_2}
K_2(\Qb_2) \rto^{(-,-)_2} \Z/2,
\end{equation*}
Similarly for $k\subset\Rb$, we define $h_2$ to be the composition
\begin{equation*}
    K_1(\Var_{k}) \rto^{\pi_1((\bar \cdot)_*\circ\Zeta)} K_1(\Aut(\Qb_2))
\rto^{\sigma_2}
K_2(\Qb_2) \rto^{(-,-)_2} \Z/2,
\end{equation*}
where $\bar\cdot$ denotes complex conjugation and $(\bar \cdot)_\ast$ is the map defined in
Corollary ~\ref{cor:gZeta} for $g=\bar \cdot$.

\begin{theorem} \label{thm:non-EM-others}
  For $k$ a global or local field with a place of cardinality equivalent to $3$
  mod $4$, or for $k\subset \Rb$, $\tilde K(\Var_k)$ has nontrivial higher
  homotopy groups.
\end{theorem}

\begin{remark}
  We expect that $\tilde{K}(\Var_{k})$ has nontrivial higher homotopy groups in
  general, however the particular class that we use $h_2$ to detect requires the
  assumptions on $k$. Using the methods of this paper, it should be possible to
  find a different example that gives a nontrivial class for all odd $q$, all
  global and local fields with a place of odd cardinality, and all
  non-algebraically closed subfields of $\Cb$. For even $q$, one would need the
  $p$-adic (rather than the $\ell$-adic) analogue of the derived zeta function
  to employ the present approach.
\end{remark}

\begin{proof}[Proof of Theorem~\ref{thm:non-EM-others}]
  This proof works the same way as the proof of
  Theorem~\ref{thm:non-EM-finite}.

  If $k$ is a global or local field with a place of cardinality $3$ mod $4$,
  then, for any Frobenius element $\phi$ for this place, the action of $\phi$ on
  the \'etale cohomology of $\Pb^1$ factors through the action on the special
  fiber.  In particular, the above computation similarly shows that the map
  $h_2$ takes the class $\alpha\circlearrowleft \mathbb{P}^1$ to $-1$, and thus
  gives the desired element in $\tilde{K}_1(\Var_k)$.

  For $k\subset\Rb$, we can consider the same $X$. We write
  $\pi_1\alpha\circlearrowleft (H^\ast_{et,c}(X|_{\Cb}; \Qb_2),
  \bar\cdot)$ as a direct sum
  \begin{equation*}
    (1 \circlearrowleft (\Qb_2,1))
    \oplus  (-1 \circlearrowleft
    (\Qb_2,-1)).
  \end{equation*}
  The map $s_2$ sends everything with the identity acting on it to the unit, so
  the image of this under $h_2$ is $(-1,-1)_2=-1$.  This gives the desired
  element in $\tilde{K}_1$.
\end{proof}

Theorem~\ref{thm:calc} is a direct consequence of
Theorems~\ref{thm:non-EM-finite} and \ref{thm:non-EM-others}.

\begin{remark}
  It should be possible to do a more powerful analysis on $K_1$ by exploiting
  the rich structure of automorphism groups of varieties and applying
  Proposition~\ref{prop:xiX}.
\end{remark}

\section{Questions for Future Work}\label{sec:prospects}

\subsection*{Indecomposable Elements in $K(\Var_k)$.}
Theorems \ref{thm:non-EM-finite} and \ref{thm:non-EM-others} establish that
there are non-trivial classes in the higher $K$-theory of varieties that do not
come from the sphere spectrum.  However, one could ask a more refined question:
since $K(\Var_k)$ is an $E_\infty$-ring spectrum, its homotopy groups
$K_*(\Var_k)$ form a ring.  We therefore have a ready supply of elements of
$K_*(\Var_k)$: those in the image of the multiplication
\[\beta: K_0(\Var_{k}) \otimes \pi_*(\Sb) \xrightarrow{1\otimes \sigma_{\Spec k}} K_0(\Var_{k}) \otimes
K_*(\Var_{k}) \to K_*(\Var_{k}).\] We call such elements
\emph{decomposable}.  A priori, it may be the case that this map is
surjective, and that therefore all higher homotopy groups of $K(\Var_{k})$
are decomposable.  The example constructed in Section \ref{sec:nontriv} is decomposable, since this can
just be written as $\eta \cdot [\Pb^1]$.

\begin{question}{\bf Indecomposable elements.}\mbox{}
    Do there exist indecomposable elements in $K_*(\Var_k)$?
  \end{question}

  As we explain in Remark \ref{rmk:noth2} below, we do not expect the map $h_2$
  to be able to distinguish decomposable from non-decomposable
  elements. Instead, we hope that by expanding the collection of derived motivic
  measures and employing Proposition~\ref{prop:xiX} judiciously, a suitable
  invariant could be found.

\begin{remark}\label{rmk:noth2}
  We expect the derived $\ell$-adic zeta function to factor through the $K$-theory of the Morel-Voevodsky category of motivic spectra, via the construction of compactly supported integral motivic cohomology.  This suggests that the invariant $h_2\colon K_1(\Var_k)\to K_2(\Qb_2)$ in Section \ref{sec:nontriv}
  factors through the composition
    \begin{equation*}
        K_1(\Aut(\Z))\to K_2(\Z)\to K_2(\Qb)\to K_2(\Qb_2).
     \end{equation*}
    We highlight three implications of this expected factoring:
    \begin{enumerate}
        \item It underscores the importance of the 2-adic Hilbert symbol, as opposed to the $\ell$-adic Hilbert symbol for $\ell\neq 2$.  Indeed, by Tate's computation of $K_2(\Qb)$ (see e.g. \cite[Theorem 11.6]{milnor}), the map $K_2(\Z)\to K_2(\Qb)$ is split injective, with the splitting given by the 2-adic Hilbert symbol.  In particular, the Hilbert symbols $(-,-)_\ell$ for $\ell\neq 2$ identically vanish on $K_2(\Z)$. Further, no classes outside the summand $\mu(\Qb_2)\subset K_2(\Qb_2)$ are in the image of $K_2(\Z)$.
        \item It suggests that we should not expect the map $h_2$ to be able to distinguish indecomposable elements in $\tilde{K}_1(\Var_k)$. Indeed, Milnor's computation \cite[Corollary 10.2]{milnor} shows that the map $K_1(\Aut(\Z))\to K_2(\Z)$ is surjective and the nontrivial class in $K_2(\Z)=\Z/2\Z$ is mapped onto by decomposable classes.
        \item It suggests that the higher invariants of the derived zeta functions should be in some sense independent of $\ell$. It would be fruitful to understand this more precisely!
    \end{enumerate}
\end{remark}

\subsection*{Other Derived Motivic Measures}
The recipe in this paper should work to construct derived motivic measures for
other cohomological invariants. We took $\ell$-adic cohomology as the basis for
our derived zeta function.  One would like analogous maps for the other Weil
cohomology theories.
\begin{problem}{\bf Derived $p$-adic zeta functions.}\mbox{}
    Let $k$ be a perfect field of characteristic $p$ with Witt vectors $W(k)$.  Construct a map of $K$-theory spectra
    \begin{equation*}
        K(\Var_k)\to K(\Aut(W(k)))
    \end{equation*}
    which lifts the function sending a variety $X$ to $H^\ast_{\rig,c}(X/W(k))$ to its compactly supported rigid cohomology (with constant coefficients) acted on by the Frobenius automorphism.
\end{problem}
We expect that the construction should parallel that in Section \ref{sec:approach}, with the category $\Varc$ replaced by a category of varieties $X$ equipped with a choice of compactification $X\into \bar{X}$, and a choice of map of admissible triples $(X,Y,\mc{Y})\to (\bar{X},\bar{Y},\bar{\mc{Y}})$ extending $X\into\bar{X}$ as in \cite[Section 3]{Be}, and with rigid cohomology replacing the $\ell$-adic constructions. Note that, Tsuzuki's finiteness theorem \cite[Theorem 5.1.1]{Ts} plays an essential role in defining the $W$-exact functor.

\begin{problem}{\bf Derived Serre Polynomial.}\mbox{}
    Let $k$ be a field of characteristic 0. Construct a map of $K$-theory spectra taking values in the $K$-theory of integral mixed Hodge structures
    \begin{equation*}
        K(\Var_k)\to K(\MHS_\Z)
    \end{equation*}
    which lifts the function sending a variety $X$ to $H^\ast_c(X(\Cb);\Z)$ with its canonical mixed Hodge structure.
\end{problem}
We expect that the construction should parallel that in Section \ref{sec:approach}, with the category $\Varc$ replaced by a category of varieties $X$ equipped with a choice of compactification $X\into \tilde{X}$, and a choice of cubical hyperresolution $\tilde{X}_\bullet\to \tilde{X}$ of the pair $(\tilde{X},\tilde{X}-X)$ (see e.g. \cite[Chapter 5]{PS}), and with logarithmic differential forms in lieu of the $\ell$-adic constructions.

The framework of motives suggests that the derived $\ell$-adic zeta function, along with the two maps described above, should factor through a derived motivic measure built from motivic cohomology.
\begin{problem}{\bf Derived Gillet--Soul\'e.}\mbox{}
    Let $k$ be a field admitting resolution of singularities.  Construct a map of $K$-theory spectra taking values in the $K$-theory of integral Chow motives over $k$
    \begin{equation*}
            K(\Var_k)\to K(\M_k)
    \end{equation*}
    which lifts the motivic measure of Gillet--Soul\'e \cite{GS} sending a $k$-variety $X$ to its compactly supported integral Chow motive. Prove that the derived $\ell$-adic zeta function and the derived Serre polynomial factor through this map.
\end{problem}
We expect that the replacement of $\Varc$ should be the same as for the Serre polynomial.

For general fields $k$, one might expect to have a motivic measure based on integral Voevodsky motives through which all of the above maps factor.

Moving further away from cohomological invariants, one of the richest motivic measures is Kapranov's motivic zeta function
\begin{align*}
    K_0(\Var_k)&\to W(K_0(\Var_k))\\
    [X]&\mapsto \sum_{i=0}^\infty [\Sym^i(X)]t^i.
\end{align*}

\begin{question}{\bf Derived motivic zeta function.}\mbox{}
    Does Kapranov's motivic zeta function lift to a map of $K$-theory spectra?
\end{question}
For motivation, recall that Weil's realization that the classical zeta function of a variety over a finite field can be obtained cohomologically provided a robust strategy for proving that the zeta function of such varieties is rational.  Similarly, a lift of Kapranov's motivic zeta function to a map of $K$-theory spectra might be expected to go a long way toward proving that the motivic zeta function is rational, in an appropriate sense.

Recall, however, that purely as a map out of $K_0(\Var_{\Cb})$, Kapranov's motivic zeta function is \emph{not} rational.  This was proven by Larsen and Lunts \cite{LL}, and the key tool in their proof was a motivic measure
\begin{equation*}
    \mu_{LL}\colon K_0(\Var_{\Cb})\to \Z[\SB_{\Cb}]
\end{equation*}
taking values in the free abelian group on stable birational equivalences classes of complex varieties.  Note that, since $\Pb^1\sim_{\SB} \ast$, Larsen and Lunts' measure takes the class of the affine line to $0$. In particular, it still may be the case that Kapranov's motivic zeta is rational after inverting the affine line, or performing some other modification of $K_0(\Var_k)$ (cf. \cite{LL2}). This underpins the ``in the appropriate sense'' above.

\begin{question}{\bf Derived Larsen--Lunts.}\mbox{}
  Does the Larsen--Lunts measure lift to a map of $K$-theory spectra? Using the formalism of assemblers, the third-named author was able to accomplish this \cite{zakharevich_annihilator}. However, it would be desirable to have a direct construction of this motivic measure. For this, one would need an SW-category which naturally encodes stable birational equivalence of projective varieties.
\end{question}

\appendix

\section{Functorial factorization of weak cofibrations} \label{app:FFWC}

In this appendix, we define and verify in the cases of interest the technical condition ``functorial factorization of weak cofibrations'' defined in \cite{BM-ku}. The condition is meant to be a weakening of Waldhausen's cylinder functors \cite[Sect. 1.6]{waldhausen}.

\begin{defn}
Let $[1]$ denote the ordered set $0 < 1$ considered as a category, and let $[2]$
the ordered set $0 < 1 < 2$, also considered as a category. A \emph{functorial
  factorization} is a functor $\varphi:\Fun([1], \mc{C}) \to \Fun([2], \mc{C})$ such
that $(d^1)^*\circ \varphi = 1_{\Fun([1],\mc{C})}$.  Here, $d^1:[1] \rto [2]$
takes $0$ to $0$ and $1$ to $2$.
\end{defn}

\begin{defn}
  Let $\mc{C}$ be a Waldhausen category. A \emph{weak equivalence} between
  morphisms $f:A \rto B$ and $g:C \rto D$ is a diagram
  \[\xymatrix{
      A \ar[r]^f \ar[d]_\sim & B \ar[d]^\sim \\
      C \ar[r]^g & D
    } \]
  A \emph{weak cofibration} is a map
  $A \to B$ that admits a zig-zig of weak equivalences to a cofibration
  $A' \hookrightarrow B'$.
\end{defn}

\begin{defn}\cite[Definition 2.2]{BM-ku}
  Let $\mc{C}$ be a Waldhausen category. Write $\Fun^{wc}([1],\C)$ for the full
  subcategory of $\Fun([1],\C)$ consisting of those functors whose image is a weak
  cofibration.  Let $\Fun^{c,w}([2],\C)$ be the full subcategory of
  $\Fun([2],\C)$ consisting of those diagrams which are a cofibration followed
  by a weak equivalence.  A \emph{functorial factorization of weak cofibrations}
  is a functor $\varphi:\Fun^{wc}([1],\C) \rto \Fun^{c,w}([2],\C)$ such that
  $(f^1)^*\circ \varphi = 1_{\Fun^{wc}([1],\C)}$.
\end{defn}

Let $\C$ be a cofibrantly generated model category.  We thus have a functorial
factorization
\[\Fun([1],\C) \rto \Fun^{c,w}([2],\C),\]
given by the functorial factorization of morphisms into a cofibration followed
by an acyclic fibration.  If a Waldhausen category arises as a subcategory of a
model category, we can often leverage this factorization to obtain functorial
factorizations inside the Waldhausen category.  The main problem is that objects
in a Waldhausen category need to be small (in some sense), whereas functorial
factorizations often produce very large objects.  However, in
Lemma~\ref{lem:chains} we showed that being \emph{homologically} small is
sufficient; when weak equivalences are quasi-isomorphisms this is therefore
sufficient.

To produce the functorial factorizations that we need we appeal to a theorem of
Beke \cite{beke} which produces model category structures on categories of
chain complexes. A version of this theorem is also proved in \cite[Theorem 2.2]{hovey-str}.

\begin{theorem}\cite[Proposition 3.13]{beke}
  Let $\mc{A}$ be a Grothendieck abelian category, i.e. $\mc{A}$ admits a generator, has small colimits, and filtered colimits commute with finite limits. Then $\operatorname{Ch}(\mc{A})$ admits a cofibrantly generated model structure where
  \begin{itemize}
  \item weak equivalences are quasi-isomorphisms
  \item cofibrations are injections
  \item fibrations have the right lifting property with respect to trivial cofibrations
  \end{itemize}
\end{theorem}

This entitles us to the following two theorems.

\begin{theorem}
  The category $\Chb(\Repc(G_k);\ell)$ admits functorial factorization of weak equivalences.
\end{theorem}
\begin{proof}
  As noted in Proposition \ref{prop:contGal}, the category
  $\Repc_{cts}(G_k;\ell)$ is equivalent to the category
  $\operatorname{Sh}^{e t}(\Spec(k); \ell)$. This is the category of sheaves of
  an abelian group on a ringed site, as such, it is a Grothendieck category.  By
  Beke's Theorem, $\operatorname{Ch}(\Repc_{cts}(G_k;\ell))$ admits a
  cofibrantly generated structure. Thus, all morphisms in
  $\operatorname{Ch}(\Repc_{cts}(G_k; \ell)$ have functorial factorizations.
  Given a weak cofibration $A \to B$ factor it as $A \hookrightarrow C \to B$;
  it remains to show that $C \in \Chb(\Repc_{cts}(G_k;\ell))$.  However, since
  $C \to B$ is a weak equivalence and $B$ is homologically bounded, $C$ must be
  as well.
\end{proof}

\begin{theorem}
The category $\Chb(R)$ admits functorial factorization of weak equivalences.
\end{theorem}
\begin{proof}
  The category $\Mod_R$ is a Grothendieck abelian category. As such,
  $\Ch(\Mod_R)$ has a cofibrantly generated injective model structure. Thus, the
  category of all maps $X \to Y$ in $\Ch(\Mod_R)$ has a functorial factorization
  of the required form. Restricting to $\Chb(\Mod_R)$, as in the previous proof,
  we see that $\Chb(\Mod_R)$ does as well.
\end{proof}



\section{The proof of Axiom (5)} \label{app:large-diag}

Axiom (5) states the following.  Suppose that we are given a cartesian diagram
\[\xymatrix{ (X, \bar X) \compar[r]^i \rcofib[d]_f & (Z,\bar Z) \rcofib[d]^g \\
    (Y, \bar Y) \compar[r]^j & (W,\bar W).}\]
We must show that
\[ \xymatrix@C=4em{
    F_n(X,\bar X) \ar[r]^{F^!_n(i)}  & F_n(Z, \bar Z)  \\
    F_n(Y, \bar Y) \ar[r]^{F^!_n(j)} \ar[u]^{F_n^!(f)}
    & F_n(W, \bar W)
    \ar[u]_{F_!^n(g)}
  }\]
commutes.  Note that since $f$ and $g$ are both closed,
we know that on sheaves $f_* = f_!$ and $g_* = g_!$.


First, note that the following diagram (in $\Chb(\RepcAR(\Gal(k^s/k));\ell)$) commutes.
\[\xymatrix@C=3em{
  S_{\bar Y}\gamma_{Y!} A^n_Y \ar[d]_{S_{\bar Y}\gamma_{Y!}\eta}
  \ar[r]^{\bar j_\Gamma} &
  S_{\bar W}\gamma_{W!} j_!A^n_Y \ar[d]^{S_{\bar W}\gamma_{W!}j_! \eta} \ar[r]^\cong & S_{\bar W}\gamma_{W!}j_!j^*A^n_W \ar[d]^{S_{\bar W}\gamma_{W!}j_! \eta_{j^*}} \\
  S_{\bar Y}\gamma_{Y!} f_*f^*A^n_Y \ar[r]^{\bar j_{\Gamma}}
  \ar[dd]_\cong &
  S_{\bar W}\gamma_{W!}j_!f_*f^*A^n_Y  \ar[dd]^\cong \ar[r]^\cong & S_{\bar W}\gamma_{W!}j_!f_*f^*j^*A^n_W
  \ar[d]^\cong
  \\
  & & S_{\bar W}\gamma_{W!}j_!f_*i^*g^*A^n_W \ar[d]^\cong \\
  S_{\bar Y}\gamma_{Y!} f_! A^n_X  \ar[r]^{\bar j_{\Gamma}}
  \ar[dd]_{\bar f_\Gamma^{-1}} \ar@{}[rdd]|{*}
  & S_{\bar W}\gamma_{W!}j_!f_! A^n_X \ar[d]^\cong \ar[r]^\cong & S_{\bar W}\gamma_{W!}j_!f_! i^*A^n_Z \ar[d]^\cong \\
  & S_{\bar W}\gamma_{W!} g_!i_! A^n_X \ar[d]^{\bar g_\Gamma^{-1}} \ar[r]^\cong
  & S_{\bar W}\gamma_{W!}g_!i_!i^*A^n_Z \ar[r]^-{S_{\bar W}\gamma_{W!}g_!\epsilon}
  \ar[d]^{\bar g_\Gamma^{-1}} & S_{\bar W}\gamma_{W!} g_!A^n_Z \ar[d]^{\bar g_\Gamma^{-1}} \\
  S_{\bar X}\gamma_{X!} A^n_X \ar[r]^{\bar i_\Gamma} & S_{\bar Z}\gamma_{Z!} i_!A^n_X \ar[r]^\cong &
  S_{\bar Z}\gamma_{Z!} i_!i^*A^n_Z
  \ar[r]^{S_{\bar Z}\gamma_{Z!}\epsilon}&
  S_{\bar Z}\gamma_{Z!} A^n_Z
}
\]
The pentagon marked $*$ commutes because of the naturality of $(-)_\Gamma$. The
composition around the bottom is $F_n^{!}(i)\circ F^n_!(f)$.

Now consider the following diagram in $\Sh(W\times_k k^s)$:
\[\xymatrix@C=3.3em{
    j_!j^*A^n_W \ar[r]^= \ar[d]_{j_!\eta_{j^*}}  & j_!j^*A^n_W
  \ar[r]^-\epsilon \ar[d]^{j_!j^*\eta} & A^n_W \ar[d]^{\eta} \\
  j_!f_*f^*j^* A^n_W
  \ar[d]_\cong \ar@{}[ru]|\star
   & j_!j^*g_*g^*A^n_W
  \ar[r]^-{\epsilon} \ar[dd]^{\cong} &
  g_*g^*A^n_W \ar[dd]^{\cong}\\
  j_!f_*i^*g^*A^n_W \ar[ur]|{j_!\alpha_{g^*}} \ar[d]_\cong \\
   j_!f_!i^*A_Z^n \ar[d]_\cong \ar[r]^{j_!\alpha}&   j_!j^*g_!A^n_Z  \ar[r]^\epsilon  & g_!A^n_Z \ar[d]^\cong\\
  g_!i_!i^* A^n_Z \ar[rr]^-{g_!\epsilon} \ar@{}[rru]|\dagger && g_!A^n_Z
}
\]
Here, it is important to keep in mind that since the original diagram of
varieties is Cartesian (and since $f_*=f_!$ and $g_* = g_!$), there is a natural isomorphism
\[\alpha: f_!i^*\cong f_*i^*  \Longrightarrow j^*g_*\cong j^*g_!.\]
The only two parts of this diagram which do not commute by definition are the two
pentagons marked $\star$ and $\dagger$.  To see that these commute, it suffices
to check that the two diagrams
\[\xymatrix@C=4em{
    f_*f^*j^* A^n_W \ar[d]_\cong  & j^* A^n_W  \ar[l]_-{\eta(f)_{j^*}} \ar[d]^{j^*\eta(g)}\\
    f_*i^*g^*A^n_W \ar[r]^{\alpha_{g^*}} & j^*g_*g^*A^n_W
  }\]
and
\[\xymatrix@C=4em{
    j_!f_!i^*A^n_Z \ar[r]^{j_!\alpha} \ar[d]_\cong &
    j_!j^*g_!A^n_Z \ar[d]^{\epsilon(j)} \\
    g_!i_!i^*A^n_Z \ar[r]^{\epsilon(i)} & g_!A^n_Z
  }\]
commute. That these commute follows directly from the definition of the base change homomorphism (see e.g. \cite[p. 60]{FK}). More conceptually, base change (and proper push forward) preserves the base change isomorphism.

After applying $S_{\bar W}\gamma_{W!}$ to this diagram, it fits into the rectangle in the
upper-right in the above diagram; then the composition around the top is $F^n_!(g)\circ F_n^!(j)$.  This proves Axiom (5).

\bibliographystyle{amsalpha}
\bibliography{CWZ}

\end{document}